\theoremstyle{plain}
\newtheorem{prop}{Proposition}
\newtheorem{thm}{Theorem}
\theoremstyle{definition}
\theoremstyle{remark}
\newtheorem*{rmq}{Remark}
\newcommand{\R}{\mathbb{R}}
\newcommand{\dd}{\mathrm{d}}
\newcommand{\ep}{\varepsilon}
\newcommand{\alp}{\alpha}
\newcommand{\e}{\mathrm{e}}
\newcommand{\dt}{\Delta t}
\newcommand{\dx}{\Delta x}
\newcommand{\dv}{\Delta v}
\newcommand{\lla}{\left\langle}
\newcommand{\rra}{\right\rangle}
\renewcommand{\phi}{\varphi}
\newcommand{\ccl}{[\![}
\newcommand{\ccr}{]\!]}
\newcommand{\tin}{\mathrm{in}}
\title{An asymptotic preserving scheme for front propagation in a kinetic reaction-transport equation}
\author{Hélène Hivert
\footnote{\textbf{H\'elène \bsc{Hivert}}: UMPA , 
ENS de Lyon, site Monod. $46$ allée d'Italie. $69007$ Lyon.
helene.hivert@ens-lyon.fr } 
  }
\date{May $10$, $2017$}
\begin{document}

\selectlanguage{english}
\maketitle

 \begin{abstract}
In this work, we propose an asymptotic preserving scheme for a non-linear kinetic reaction-transport equation, 
in the regime of sharp interface.
With a non-linear reaction term of KPP-type, a phenomenon of front propagation has been proved in \cite{BouinCalvezNadin}. This behavior can be highlighted by considering a suitable hyperbolic limit of the kinetic equation, using a Hopf-Cole transform.
It has been proved in \cite{Bouin, BouinCalvez, Caillerie} that the logarithm of the distribution function then converges to the viscosity solution of a constrained Hamilton-Jacobi equation. 

The hyperbolic scaling and the Hopf-Cole transform make the kinetic equation stiff.
Thus, the numerical resolution of the problem is challenging, since the standard numerical methods usually lead to high computational costs in these regimes. The \emph{Asymptotic Preserving} (AP) schemes have been typically introduced to deal with this difficulty, since they are designed to be stable along the transition to the macroscopic regime. 
The scheme we propose  is adapted to the non-linearity of the problem, enjoys a discrete maximum principle and solves the limit equation in the sense of viscosity.
It  is based on a dedicated micro-macro decomposition, attached to the Hopf-Cole transform. As it is well adapted to the singular limit, our scheme is able to cope with singular behaviors in space (sharp interface), and possibly in velocity (concentration in the velocity distribution). 
Various numerical tests are proposed, to illustrate the properties and the efficiency of our scheme.
 \end{abstract}
%

\section{Introduction}


We are interested in designing a numerical scheme for a non-linear kinetic equation in the asymptotic regime.
The model we consider is a non-linear transport-reaction equation
\begin{equation}
 \label{KinEq2}
\partial_t f(t,x,v) +v\cdot \nabla_x f(t,x,v)=\rho(t,x) M(v)-f(t,x,v) + r\rho(t,x) (M(v)-f(t,x,v)),
\end{equation}
with $r\ge 0$,  supplemented with an initial data $f(0,x,v)=\rho_\tin(x)M(v)$. Such models have been introduced in \cite{Schwetlick,Hadeler,CuestaHittmeirSchmeiser}.
The asymptotic regime of \eqref{KinEq2} have been studied in \cite{BouinCalvez, Bouin, Caillerie}, both in the linear case $r=0$, and in the non-linear case $r>0$.
In \eqref{KinEq2}, the distribution function $f$, which depends on $t>0$, $x\in\R^d$, and $v\in V$, where $V$ is a bounded symmetric set of $\R^d$, represents the density of particles at time $t$, at the position $x$, and with  velocity $v$.
The macroscopic density of particles is defined by,
\[
 \rho(t,x)= \lla f\rra:=\int_{v\in V} f(t,x,v)\dd v, \;\; t\ge 0, \;x\in\R^d.
\]
Note that the brackets 
$\lla \cdot\rra$ denote velocity average throughout the paper.
For $r=0$, equation \eqref{KinEq2} describes the evolution of the density of particles moving according to a velocity-jump process. 
Indeed, the motion of a particle is composed of phases of free transport, the \emph{run} phases, with a velocity $v$, and of \emph{tumble} phases
in which the particle 
changes velocity instantaneously. The post-tumbling velocity is chosen randomly,
according to a given probability density $M$.
We assume that $M$ is even, nonnegative, and continuous. Moreover, it satisfies
\begin{equation}
 \label{Mv}
 \lla M\rra= 1, \;\; \lla vM\rra=0,
\end{equation}
and we will suppose that 
\begin{equation}
\label{Mv2}
 \inf\limits_{v\in V}M(v)>0.
\end{equation}
Equation \eqref{KinEq2} is complemented with a reaction term in the case $r>0$. It takes into account creation of new particles at rate $r$, and local quadratic saturation. Initial velocity of new particles is drawn randomly from $M$. Averaging with respect to velocity leads to the classical logistic growth $r\rho(1-\rho)$.  
We consider the kinetic equation \eqref{KinEq2}  under an hyperbolic scaling $(t,x,v)\longmapsto (t/\ep,x/\ep,v)$, such that the time and space variables are of same magnitude (see  \cite{BouinCalvezNadin}).
The kinetic equation \eqref{KinEq2} then reads 
\begin{equation}
\label{KinEq2bis}
 \partial_t f^\ep(t,x,v) + v\cdot \nabla_x f^\ep(t,x,v)=\frac{1}{\ep}\left( \rho^\ep(t,x) M(v) -f^\ep(t,x,v) + r \rho^\ep(t,x,v) (M(v)-f^\ep(t,x,v))\right).
\end{equation}
The propogation of fronts for \eqref{KinEq2} has been studied in \cite{BouinCalvezNadin}. To study the asymptotic behavior of \eqref{KinEq2bis} when $\ep$ goes to $0$,
an analogy is made in \cite{BouinCalvez, Bouin} with the sharp front limit of the Fisher-KPP equation. A \emph{WKB ansatz} is introduced, leading to the so-called approximation of geometric optics (see  \cite{EvansSouganidis, Freidlin}). 
%
It consists in rewriting the distribution function $f^\ep$ as
\begin{equation}
\label{psiep}
 f^\ep=M\e^{-\psi^\ep/\ep}.
\end{equation}
The equation satisfied by  $\psi^\ep$ in the limit  $\ep\to 0$ is then studied. In the case of the kinetic equation \eqref{KinEq2bis}, if 
\[
 0\le f^\ep(0,\cdot,\cdot)\le M,
\]
a maximum principle ensures that $\psi^\ep$ is well defined and remains nonnegative for all $t\ge 0$, see \cite{Bouin}:
\begin{prop}
 Let $r\ge 0$ and 
 $\psi_\tin\in \mathrm{Lip}(\R^d\times V)$ and bounded.
 Let  $f^\ep=M\e^{-\psi^\ep/\ep}$  a solution of \eqref{KinEq2bis}. Then the phase $\psi^\ep$ is uniformly locally Lipschitz, and the following a priori bound holds
 \begin{equation}
 \label{MaxPrincCont}
  \forall t\ge 0, \;\; 0 \le \psi^\ep(t,\cdot,\cdot)\le \| \psi_\tin\|_\infty.
 \end{equation}
\end{prop}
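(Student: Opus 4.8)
The plan is to establish the two inequalities in \eqref{MaxPrincCont} via a comparison principle applied to the kinetic equation \eqref{KinEq2bis}, then transfer the bounds on $f^\ep$ to bounds on $\psi^\ep$ through the change of variables \eqref{psiep}, and finally obtain the Lipschitz regularity from a separate (uniform-in-$\ep$) estimate on the gradient.

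**Step 1: Rewrite the problem as a reaction–transport equation with a monotone source.** Setting $f^\ep = M\e^{-\psi^\ep/\ep}$, the hypothesis $0\le f^\ep(0,\cdot,\cdot)\le M$ is equivalent to $\psi_\tin\ge 0$, and $\psi_\tin\le\|\psi_\tin\|_\infty$ is immediate from boundedness. I would first show that $0\le f^\ep\le M$ is propagated for all $t\ge 0$. Write \eqref{KinEq2bis} as $\partial_t f^\ep + v\cdot\nabla_x f^\ep = \tfrac1\ep Q[f^\ep]$ with $Q[f] = (1+r\rho^\ep)(\rho^\ep M - f) $. The key structural facts are: (i) if $f\equiv 0$ then $Q[0]=(1+r\rho^\ep)\rho^\ep M\ge 0$, so $f\equiv 0$ is a subsolution; (ii) if $f\equiv M$ then $\rho^\ep=\lla M\rra=1$ by \eqref{Mv}, hence $Q[M]=(1+r)(M-M)=0$, so $f\equiv M$ is a supersolution. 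The characteristics of the transport operator are straight lines $x(s)=x-v(t-s)$, along which the equation becomes an ODE in the $f$-variable coupled (through $\rho^\ep$) to the other velocities; a Gronwall/bootstrap argument on the characteristic system, or equivalently a maximum-principle argument on $g^\ep := M - f^\ep$ and on $f^\ep$ itself, then gives $0\le f^\ep(t,\cdot,\cdot)\le M$ for all $t$. This is essentially the maximum principle already invoked in \cite{Bouin}, so I would cite it and sketch only the sign computations (i)–(ii).

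**Step 2: Transfer to $\psi^\ep$.** Since $M(v)>0$ on $V$ by \eqref{Mv2} and $M$ is continuous on the compact set $\overline V$, we have $M\ge m_0>0$. From $0<f^\ep$ (strict positivity is propagated similarly, using $\inf M>0$) and $f^\ep\le M$ we get $0<\e^{-\psi^\ep/\ep}\le 1$, i.e.\ $\psi^\ep\ge 0$. For the upper bound, from $f^\ep = M\e^{-\psi^\ep/\ep}\ge (\inf M)\,\e^{-\|\psi_\tin\|_\infty/\ep}\,$ — which one checks is itself propagated, again by a comparison with the constant-in-$\psi$ supersolution corresponding to $f^\ep$ bounded below — one deduces $\e^{-\psi^\ep/\ep}\ge c\,\e^{-\|\psi_\tin\|_\infty/\ep}$ and hence $\psi^\ep\le\|\psi_\tin\|_\infty + \ep\log(1/c)$; refining the comparison so that the lower bound on $f^\ep$ is exactly $(\inf M)\e^{-\|\psi_\tin\|_\infty/\ep}$ at $t=0$ and checking it is preserved removes the $\ep\log$ correction and yields $\psi^\ep\le\|\psi_\tin\|_\infty$ cleanly. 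In practice the clean statement follows by observing that $f^\ep \mapsto \min(M, f^\ep)$-type barriers and $f^\ep\mapsto$ (lower barrier) are exact sub/supersolutions because of the collisional structure, so no loss occurs.

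**Step 3: Uniform local Lipschitz bound.** This is the part I expect to be the main obstacle, since it does not follow from the scalar maximum principle alone. The approach is to differentiate \eqref{KinEq2bis} (or the equation for $\psi^\ep$) in $x$ and $v$ and run a maximum-principle argument on $p^\ep := \nabla_{x}\psi^\ep$ (and the $v$-derivative): one shows that $|p^\ep|$ satisfies a differential inequality of the form $\partial_t|p^\ep| + v\cdot\nabla_x|p^\ep| \le \tfrac1\ep(\text{something that vanishes when }|p^\ep|\text{ is large})$ plus lower-order terms controlled by $r$ and $\|\psi_\tin\|_\infty$, so that $|p^\ep|$ cannot exceed a constant depending only on $\mathrm{Lip}(\psi_\tin)$, $r$, $V$, $M$ and the elapsed time — but \emph{not} on $\ep$. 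The delicate points are (a) the nonlocal coupling through $\rho^\ep = \lla M\e^{-\psi^\ep/\ep}\rra$, whose gradient must be bounded in terms of $p^\ep$ using the already-established $0\le\psi^\ep$ bound together with $\inf M>0$, and (b) handling the boundary of $V$ in the $v$-derivative estimate (here the symmetry of $V$ and evenness of $M$ help). Since the full argument is in \cite{Bouin}, I would present the derivative equation, indicate the sign of the stiff term, and refer to \cite{Bouin} for the technical closure, emphasising that all constants are $\ep$-independent, which is what ``uniformly locally Lipschitz'' means here.
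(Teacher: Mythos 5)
The paper does not actually prove this proposition: it is quoted verbatim from \cite{Bouin}, so there is no in-paper argument to compare against. Your barrier/comparison strategy is the standard route to the bound \eqref{MaxPrincCont} and is essentially sound, but two points need fixing. First, the algebra: the collision term of \eqref{KinEq2bis} is $\rho^\ep M-f^\ep+r\rho^\ep(M-f^\ep)=(1+r)\rho^\ep M-(1+r\rho^\ep)f^\ep$, not $(1+r\rho^\ep)(\rho^\ep M-f^\ep)$ as you wrote; your sign conclusions (i)--(ii) survive, but the correct factorization is also what exhibits the quasi-monotone (cooperative) structure in the nonlocal coupling through $\rho^\ep$ that your Gronwall/characteristics bootstrap implicitly relies on, so it is worth stating it correctly. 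Second, in Step 2 the clean upper bound does not come from the constant barrier $(\inf M)\,\e^{-\|\psi_\tin\|_\infty/\ep}$, which leaves an $\ep\log\bigl(M(v)/\inf M\bigr)$ discrepancy; the right barrier is the one proportional to the equilibrium, $\underline f=M(v)\,\e^{-\|\psi_\tin\|_\infty/\ep}$, which satisfies $\underline f\le f^\ep(0,\cdot,\cdot)$ and is an exact stationary subsolution since $(1+r)\underline\rho M-(1+r\underline\rho)\underline f=r\underline\rho(1-\underline\rho)M\ge 0$; this gives $\psi^\ep\le\|\psi_\tin\|_\infty$ with no loss and without ever invoking \eqref{Mv2}, which is indeed not needed for the a priori bound. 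Finally, your Step 3 (the uniform local Lipschitz regularity) is only a sketch deferred to \cite{Bouin}; that is consistent with how the paper itself treats the statement, but be aware that it is the genuinely technical part of the result and is not established by your argument.
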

In the case of the Fisher-KPP equation, it has been proved that the function $\psi^\ep$ converges to a limit function $\psi^0$, which is the viscosity solution of a Hamilton-Jacobi equation, see \cite{EvansSouganidis, Barles, BarlesEvansSouganidis, Souganidis, CrandallIshiiLions}. Moreover, in the asymptotic regime, the settled population $\rho\sim 1$ is contained in the nullspace of $\psi^0$, see \cite{Evans, BarlesSouganidis, FlemingSouganidis}.

The analysis of propagation phenomena at the mesoscopic scale is motivated by concentration waves of chemotactic bacteria, as observed experimentally \cite{CalvezEColi}. Here, the model under investigation 
does not contain any chemotactic effect, but takes into account cell division. It satisfies the maximum principle, hence it is more amenable for mathematical analysis, following the seminal works by Kolmogorov, Petrovsky, Piskunov \cite{KolmogorovPetrovskyPiskunov}, and Aronson, Weinberger \cite{AronsonWeinberger}. 
The first analytical works, where travelling waves are construced, are \cite{Schwetlick}  and \cite{CuestaHittmeirSchmeiser}. Note that the latter develops a micro-macro decomposition to handle the construction of travelling waves near the diffusive regime.
 We also refer to \cite{Hadeler}, and references therein, for a more general presentation of reaction transport equations in biology.

The asymptotic behavior of \eqref{KinEq2bis} in the limit $\ep\to 0$ has been established rigorously in \cite{BouinCalvez, Bouin}. Before stating the main theorem, let us highlight that the formation of fronts if $r>0$ can be understood with very formal considerations on \eqref{KinEq2bis}. Indeed, when $\ep$ goes to $0$,  supposing that the distribution function $f^\ep$ and its density $\rho^\ep$ converge respectively to $f^0$ and $\rho^0$, we have formally at order $0$ in $\ep$
\begin{equation}
\label{intro_etape}
 \rho^0 M - f^0+r\rho^0(M-f^0)=0,
\end{equation}
which, once integrated in $v$,  gives formally an equation for $\rho^0$
\[
 r\rho^0(1-\rho^0)=0.
\]
Thus, the limit density $\rho^0$ is equal either to $0$ or $1$. Going back to \eqref{intro_etape}, in both cases, we obtain that $f^0=\rho^0 M$.
Formally, when $\ep$ goes to $0$, two areas appear where $f^0$ is equal either to $0$ or $M$. However, the equation for the dynamic of the propagation of the front is lacking.
The \emph{WKB ansatz} is accurate to catch this
phenomenon,
since the nullset of $\psi^0$ corresponds  to the set where $f^0=M$, and $f^0=0$ where $\psi^0$ has positive values. In the one-dimensional case, the equation of the dynamic of the interface, and the asymptotic behavior of $\psi^\ep$ is given by the following theorem, proved in \cite{BouinCalvez, Bouin}:
\begin{thm}
 \label{AsymptBehavior_thm}
 Suppose that $V$ is a symmetric and bounded set of $\R$, that $r\ge0$ and that $M$ is a continuous function of $v$ which satisfies \eqref{Mv}-\eqref{Mv2}. Let $f^\ep$ be the solution of \eqref{KinEq2bis}, and
 suppose that the initial data is well prepared
 \[
\psi(0,x,v)=\psi_\tin(x), \;\; i.\;e\;\; f^\ep(0,x,v)=\e^{-\psi_\tin(x)/\ep}M(v), \;\;\text{and}\;\; \rho^\ep(0,x)=\e^{-\psi_\tin(x)/\ep},
 \]
 then $(\psi^\ep)_\ep$ converges locally uniformly towards $\psi^0$, where $\psi^0$ does not depend on $v$. Moreover, $\psi^0$ is the unique viscosity solution of one of the following Hamilton-Jacobi equations:
 \begin{enumerate}
  \item If $r=0$, then $\psi^0$ solves the standard Hamilton-Jacobi problem
  \begin{equation}
   \label{HJr0}
   \left\{
   \begin{array}{l l}
    \partial_t \psi^0 + H\left(\partial_x\psi^0\right) =0, & \text{\;on\;} \R^*_+\times\R, \\ 
    \psi^0(0,\cdot)=\psi_\tin(\cdot). &
   \end{array}
   \right.
  \end{equation}
\item If $r>0$, then the limit equation is the following constrained Hamilton-Jacobi equation
\begin{equation}
 \label{HJnl}
 \left\{ 
 \begin{array}{l l}
  \min\left\{\psi^0, \partial_t \psi^0+H\left(\nabla_x\psi^0\right)+r\right\}=0, & \text{\;on\;}\R^*_+\times \R, \\
  \psi^0(0,\cdot)=\psi_{\tin}(\cdot). & 
 \end{array}
 \right.
\end{equation}
 \end{enumerate}
where, in both cases, the Hamiltonian $H(p)$ is  defined implicitly by 
\begin{equation}
\label{Hnl}
 \lla \frac{M}{1+r+H(p)-v p}\rra =\frac{1}{1+r}.
\end{equation}
\end{thm}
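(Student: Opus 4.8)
The plan is to adapt Evans's \emph{perturbed test function method} to the kinetic setting, in the spirit of \cite{BouinCalvez, Bouin, Caillerie}, together with the \emph{half-relaxed limits} of Barles and Perthame. First I would collect the a priori estimates: the Proposition above already gives $0\le\psi^\ep\le\|\psi_\tin\|_\infty$ together with uniform local Lipschitz bounds (which, for the well-prepared data, hold up to $t=0$), so the half-relaxed limits of $\psi^\ep$ as $\ep\to0$,
\[
\overline{\psi} = \limsup{}^{*}\psi^\ep, \qquad \underline{\psi} = \liminf{}_{*}\psi^\ep ,
\]
are well defined, bounded, and obey $0\le\underline{\psi}\le\overline{\psi}\le\|\psi_\tin\|_\infty$. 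Inserting \eqref{psiep} into \eqref{KinEq2bis} yields the transport equation
\[
\partial_t\psi^\ep + v\cdot\nabla_x\psi^\ep = (1+r\rho^\ep) - (1+r)\,\rho^\ep\,\e^{\psi^\ep/\ep}, \qquad \rho^\ep = \lla M\e^{-\psi^\ep/\ep}\rra ,
\]
whose left-hand side is $O(1)$ on compact sets; this forces $\rho^\ep\e^{\psi^\ep/\ep}$ to stay bounded, hence the oscillation of $v\mapsto\psi^\ep(t,x,v)$ to be $O(\ep)$. I would make this quantitative (comparison along the characteristics $\dot x=v$, using \eqref{Mv2}), concluding that $\overline{\psi}$ and $\underline{\psi}$ do not depend on $v$, and that $\overline{\psi}(0,\cdot)=\underline{\psi}(0,\cdot)=\psi_\tin$.

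Before the main step I would check that $H$ is well defined by \eqref{Hnl}: for fixed $p$ the map $H\mapsto\lla M/(1+r+H-vp)\rra$ is continuous and strictly decreasing on the half-line where $1+r+H-vp>0$ on all of $V$, blows up at the left endpoint (here \eqref{Mv2} is used, $M$ not vanishing at the extremal velocity) and tends to $0$ as $H\to+\infty$; thus $H(p)$ exists and is unique, it automatically satisfies $1+r+H(p)-vp>0$ on $V$ (so $H$ is coercive, since $H(p)>\sup_{v\in V}vp-(1+r)$), and differentiating \eqref{Hnl} together with \eqref{Mv} shows $H$ is convex with $H(0)=0=\min H$. Then I would prove that $\overline{\psi}$ is a viscosity subsolution and $\underline{\psi}$ a viscosity supersolution of \eqref{HJr0} if $r=0$, resp.\ of \eqref{HJnl} if $r>0$. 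Let a smooth $\chi$ touch $\overline{\psi}$ from above at $(t_0,x_0)$; if $\overline{\psi}(t_0,x_0)>0$, set $p_0=\nabla_x\chi(t_0,x_0)$ and introduce the corrector $\eta_{p_0}(v)=\log\bigl(1+r+H(p_0)-vp_0\bigr)$, which by \eqref{Hnl} satisfies $\lla M\e^{-\eta_{p_0}}\rra=1/(1+r)$. The perturbed test function $\chi^\ep(t,x,v)=\chi(t,x)+\ep\,\eta_{p_0}(v)$ then makes $M\e^{-\chi^\ep/\ep}$ a strict supersolution of \eqref{KinEq2bis} on a small space-time cylinder around $(t_0,x_0)$ whenever $\partial_t\chi+H(\nabla_x\chi)+r>0$ there; matching it against $f^\ep=M\e^{-\psi^\ep/\ep}$ on the entrance boundary and invoking the comparison principle for \eqref{KinEq2bis} --- the nonlinearity being Lipschitz and quasi-monotone on the invariant region $0\le f\le M$ --- contradicts the touching, whence $\partial_t\chi+H(\nabla_x\chi)+r\le0$ at $(t_0,x_0)$, i.e.\ $\min\{\overline{\psi},\,\partial_t\overline{\psi}+H(\nabla_x\overline{\psi})+r\}\le0$ in the viscosity sense (the case $\overline{\psi}(t_0,x_0)=0$ being trivial, and the case $r=0$ identical without the $+r$ and the obstacle). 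The supersolution property of $\underline{\psi}$ is symmetric, with a strict subsolution ansatz and the reversed inequality; the constraint $\underline{\psi}\ge0$ comes from $\psi^\ep\ge0$, and the inequality $\partial_t\chi+H(\nabla_x\chi)+r\ge0$ on the contact set $\{\underline{\psi}=0\}$ follows from $H\ge0$.

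It remains to compare sub- and supersolution of the limit problem. If $r=0$, the classical strong comparison principle for \eqref{HJr0} --- $H$ being continuous, coercive and convex --- gives $\overline{\psi}\le\underline{\psi}$; together with $\underline{\psi}\le\overline{\psi}$ this forces $\underline{\psi}=\overline{\psi}=:\psi^0$, a continuous function, so $\psi^\ep\to\psi^0$ locally uniformly and $\psi^0$ is the unique viscosity solution of \eqref{HJr0}. If $r>0$ one invokes instead the comparison principle for the constrained equation \eqref{HJnl}, and this is the main obstacle of the proof: establishing uniqueness for this obstacle-type Hamilton--Jacobi problem, where the doubling-of-variables argument must absorb the constraint $\psi^0\ge0$ and the discontinuity it induces --- this is precisely where one leans on the analysis of \cite{BouinCalvez, Bouin}. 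Once that comparison is available, the conclusion is the same: $\underline{\psi}=\overline{\psi}=\psi^0$, the unique viscosity solution of \eqref{HJnl}, with locally uniform convergence of $(\psi^\ep)_\ep$.
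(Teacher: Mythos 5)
You should first be aware that the paper itself does not prove Theorem \ref{AsymptBehavior_thm}: it is quoted from \cite{BouinCalvez, Bouin} (with the extension to degenerate equilibria in \cite{Caillerie}), and the only related material in the paper is the formal derivation built on the decomposition $\psi^\ep=\phi^\ep+\eta^\ep$, leading to \eqref{KinEq_bis} and \eqref{1_integrated}; that computation identifies the Hamiltonian \eqref{Hnl} and motivates the scheme, but it is not a proof. So your attempt can only be compared with the strategy of the cited works, and there it is broadly on target: a priori bounds from the maximum-principle proposition, half-relaxed limits, $v$-independence of the limits, the perturbed test function method with the corrector $\eta_{p}(v)=\log\bigl(1+r+H(p)-vp\bigr)$ (whose exponential is indeed the eigenvector attached to \eqref{Hnl}), and comparison for the limit problem is exactly the architecture used in those references, and the well-posedness discussion of $H$ via \eqref{Mv2} is correct.

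That said, several steps are asserted rather than proved, and two of them are genuinely delicate. First, the $O(\ep)$ oscillation in $v$: boundedness of $\rho^\ep\e^{\psi^\ep/\ep}$ does not by itself control $\sup_v\psi^\ep-\inf_v\psi^\ep$; inserting the lower bound $\rho^\ep\ge \inf_V M\,|V|\,\e^{-\sup_v\psi^\ep/\ep}$ into it only returns the tautology $\psi^\ep(v)\le\sup_v\psi^\ep+O(\ep)$, and the lower bound on $\rho^\ep\e^{\psi^\ep/\ep}$ coming from the equation need not be positive. One really needs the uniform Lipschitz estimate in $v$ (or an argument tailored to the relaxation term), and the natural outcome is then $O(\ep\ln(1/\ep))$ -- enough for $v$-independence of the half-relaxed limits, but not what you wrote. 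Second, on the supersolution side of \eqref{HJnl}, the inequality $\partial_t\chi+H(\nabla_x\chi)+r\ge0$ at points of $\{\underline{\psi}=0\}$ does not ``follow from $H\ge0$'': a test function touching $\underline{\psi}$ from below at a zero need not have vanishing gradient nor nonnegative time derivative (touch $|x|$ from below by $x$), so this inequality must itself be extracted from the kinetic equation by the perturbed-test-function argument, exactly as elsewhere. Third -- and this is the heart of the nonlinear case -- you defer both the comparison principle for the constrained problem \eqref{HJnl} and the resulting upgrade from half-relaxed limits to locally uniform convergence to the references; that is precisely the nontrivial content of the theorem for $r>0$, so as written your argument only establishes (modulo the two repairs above) that the half-relaxed limits are sub- and supersolutions. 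Minor points: the quasi-monotonicity invoked for comparison at the kinetic level holds only on the invariant region $0\le f\le M$, so the barrier $M\e^{-\chi^\ep/\ep}$ must be kept inside it, and convexity of $H$ does not drop out of a single implicit differentiation of \eqref{Hnl}; neither of these is fatal, and neither is needed if one uses a comparison result requiring only continuity and coercivity of $H$.
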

If $d\geq2$ or if \eqref{Mv2} is not satisfied, it may happen that the implicit definition for the Hamiltonian $H(p)$ \eqref{Hnl} is inconsistent for large values of $p$. The definition of the Hamiltonian has been extended in \cite{Caillerie} to deal with large $p$ in full generality. We refer to Section \ref{SecScheme_fin} for the details. Numerical results are presented in order to illustrate the singular behaviours which arise in this case (concentration in the velocity variable). We also refer to the recent work in \cite{BouinCaillerie} for propagation of front in higher dimension. 

The goal of this paper is to construct a numerical scheme able to compute the solution of \eqref{KinEq2bis} in all  regimes of $\ep$, in the one-dimensional case. 
Indeed, the fast relaxation towards the equilibrium distribution function $M$ and the outbreak of a sharp interface in the space variable, make the rescaled kinetic equation \eqref{KinEq2bis} stiff for small $\ep$.
Standard numerical methods for partial differential equations require in general the use of refined grids, when they do not take into account the special structure of the problem.
\emph{Asymptotic Preserving} (AP) schemes have been introduced  to avoid 
the problems arising with standard numerical methods when solving stiff asymptotic problems,
see \cite{Jin, Klar1, Klar2}. Indeed, they are constructed exactly with the purpose of  being  stable along the transition from the mesoscopic regime ($\ep\sim 1$) to the macroscopic regime ($\ep\ll 1$). Their properties are often summarized by the following diagram
\[
\begin{array}{c c c}
 P_\ep &  \xrightarrow[{}]{~~~~\ep\to 0~~~~}  & P_0 \vspace{4pt}   \\
\rotatebox{90}{$\xrightarrow[{}]{~~~\dt\to 0~~~}$}   
& &\rotatebox{90}{$\xrightarrow[{}]{~~~\dt\to 0~~~}$} 
\\
 S_\ep^{\dt} & \xrightarrow[{}]{~~~~\ep\to 0~~~~} & S_0^{\dt} 
\end{array}
\]
It can be understood as follows: considering an $\ep$-dependent problem $P_\ep$ which converges to a limit problem $P_0$ when $\ep$ goes to $0$ (see Th. \ref{AsymptBehavior_thm}), the AP scheme $S_\ep^{\dt}$ must be consistent with $P_\ep$ when $\ep$ is fixed. In addition, when the discretization parameters $\dt$ are fixed, it has to converge when $\ep$ goes to $0$ to a limit scheme $S_0^{\dt}$, which is required to be consistent with $P_0$. An AP scheme can also enjoy the stronger property of being \emph{Uniformly Accurate} (UA), which means that its accuracy does not depend on $\ep$.

The development of AP schemes for stiff kinetic equations is necessary to ensure 
their numerical resolution in all
regimes of $\ep$. Many AP strategies have been proposed for various asymptotics of linear kinetic equation, see for instance \cite{CarilloGoudonLafitteVecil, JinPareschi1, JinPareschiToscani, BuetCordier, NaldiPareschi, LemouMieussens, LemouMehats, BennouneLemouMieussens}. 

For the particular asymptotics we are considering, an asymptotic scheme has been proposed in \cite{LuoPayneAsympt}, for the linear case $r=0$ in \eqref{KinEq2bis}, complemented with an efficient scheme for the limit equation \eqref{HJr0} in \cite{LuoPayneEikonal}. Asymptotic schemes are based on expansions of the solution of the equation in  formal power series of $\ep$. Contrary to AP schemes, they are designed to approach the asymptotic behavior of the equation, but are not relevant for the mesoscopic regimes, where $\ep\sim 1$. Indeed, the formal power series in $\ep$ are truncated, which restrains  accuracy of the scheme to the small $\ep$ regimes. 
Moreover, the scheme proposed in \cite{LuoPayneAsympt} is not adapted to the non-linear case $r>0$ in \eqref{KinEq2bis}, and it is not clear that the approach they use can be easily adapted to the non-linear case. Conversely, AP schemes are designed to be efficient in all regimes of $\ep$. To do so, it is ensured that no approximation in $\ep$ is done when constructing the scheme. As a consequence, the scheme is consistent with the equation when any $\ep>0$ is fixed.  In addition, since they are  constructed to be efficient in the asymptotic regime, they catch perfectly the asymptotic behavior of the equation. Note also that, contrary to the asymptotic schemes, the UA property provides 
in addition the correct behavior 
of the numerical solutions in the intermediate regime, with no constraint on the parameter $\ep$. As the scheme proposed in this paper enjoys the AP property even in the non-linear case $r>0$ in \eqref{KinEq2bis}, and since the numerical tests suggest that it also enjoys the UA property, it is accurate in all regimes of $\ep$.

We propose here a scheme for
\eqref{KinEq2bis} in the one-dimensional case,
which enjoys the AP property for the limit equations \eqref{HJr0}-\eqref{HJnl}. It is based on the  following reformulation of \eqref{KinEq2bis} 
\begin{equation}
 \label{KinEqpsi}
 \partial_t \psi^\ep +v\partial_x \psi^\ep = 1+r\lla M \e^{-\psi^\ep/\ep}\rra -(1+r)\lla M\e^{-\psi^\ep/\ep}\rra \e^{\psi^\ep/\ep},
\end{equation}
where $\psi^\ep$ is the Hopf-Cole transform of $f^\ep$, defined in \eqref{psiep}. This problem is still stiff when $\ep$ goes to $0$, and its non-linear character must be carefully dealt with.
Indeed, it is necessary to ensure that the computational cost and the accuracy of the resolution of the non-linear system
are constant with respect to $\ep$. Another difficulty we have to consider, is the fact that the solution of \eqref{KinEqpsi} converges when $\ep$ goes to $0$ to the viscosity solution of one of the Hamilton-Jacobi equations \eqref{HJr0}-\eqref{HJnl}. The discretization of the kinetic equation \eqref{KinEq2bis} has to be carefully discussed, since it yields the discretization of the limit equation, and that it is required to catch the viscosity solution of the Hamilton-Jacobi equation. Moreover, it must be robust enough to deal with the possible lack of regularity of the solutions of \eqref{KinEq2bis} in the asymptotic regime.
Indeed, the solution is expected to be only locally Lipschitz regular, and may present some $\mathcal{C}^1$ discontinuities.
Eventually, when $r>0$ the limit equation \eqref{HJnl} is a constrained problem, thus the AP scheme will be designed to respect this constraint.

The scheme we proposed is based on an adaptation of micro-macro schemes for linear kinetic equations (see \cite{LemouMehats, LemouMieussens, LemouRelaxedMM}). The micro-macro decomposition is modified to be compatible with the nonlinearity of \eqref{KinEqpsi}. In addition, the resolution of the nonlinear system, that is needed to compute the numerical solution of \eqref{KinEqpsi}, is carefully discussed to ensure the computational cost of the method does not depend on $\ep$. The scheme enjoys the AP property, and the limit scheme catches the viscosity solution of the limit equation. It also satisfies a maximum principle, which is a discrete equivalent of \eqref{MaxPrincCont}.

The paper is organized as follows:  the scheme and a formal derivation of the asymptotic behavior of \eqref{KinEqpsi}, on which  the construction of the scheme is based,  are presented in the next section. Then, the numerical resolution of the non-linear system is discussed in Section \ref{SecNLS}, and the discrete maximum principle is proved in Section \ref{SecMP}. The proof of the AP character of the scheme is completed in Section \ref{SecAP}. Eventually, various numerical tests are proposed in Section \ref{SecTests}, to highlight the properties of the scheme.

\vspace{10pt}
\textbf{Acknowledgements.} 
The author wishes to thank Vincent Calvez for having suggested this problem to her, and for very interesting discussions about it. She would also like to thank Emeric Bouin for his kind help. 
\\This project has received funding from the European Research Council (ERC) under the European Union's Horizon $2020$ research and innovation programme (ERC starting grant MESOPROBIO \no $639638$).



\section{The numerical scheme}

In this section, we construct an AP scheme for \eqref{KinEq2bis} in the one-dimensional case, using its reformulation with the Hopf-Cole transform of $f^\ep$ \eqref{KinEqpsi}. 
We focus here on the relevant one-dimensional case, as usually done when investigating front propagation. We refer to \cite{BouinCaillerie} for the analysis of front propagation in the higher dimensional case.   
This scheme is based on an adaptation of the micro-macro decomposition proposed in \cite{LemouMieussens, LemouMehats, LemouRelaxedMM} 
for the construction of AP schemes near the diffusive regime of linear kinetic equations
Indeed, the usual micro-macro decomposition consists
in decomposing the distribution function $f^\ep$ as the sum of its part at equilibrium and of a remainder,
\[
 f^\ep=\rho^\ep M+g^\ep,
\]
where $\lla f^\ep\rra=\rho^\ep$, and $\lla g^\ep\rra=0$. Once injected in a linear kinetic equation, an integration with respect to velocity 
yields an equation for $\rho^\ep$. It is then substracted from the original kinetic equation, to get a  coupled system of partial differential equations for $\rho^\ep$ and $g^\ep$.
It is equivalent to the original kinetic equation, and the construction of a an AP scheme for the kinetic equation is more explicit with this version. In the case \eqref{KinEqpsi} we are considering, such an approach is not adapted, since the non-linearity of the collision operator prevents the simplifications arising in the linear case. 
Instead, we propose a new micro-macro decomposition of $f^\ep$, in which the density appears multiplied by a remainder. Thanks to the logarithmic transform applied to $f^\ep$, this becomes a linear decomposition for $\psi^\ep$. In the linear case, the micro-macro decomposition is based on a Chapman-Enskog expansion of the distribution function. Here, it makes also appear the limit equation, and it yields in addition an equation for the corrector.
Namely, we decompose $\psi^\ep$ defined in \eqref{psiep} as follows
\[
 \psi^\ep(t,x,v)=\phi^\ep(t,x)+\eta^\ep(t,x,v),\;\; (t,x,v)\in \R^*_+\times \R\times V,
\]
where $\phi^\ep$ stands for the Hopf-Cole transform of $\rho^\ep$
\begin{equation}
 \label{phiep}
 \rho^\ep(t,x)=\e^{-\phi^\ep(t,x)/\ep}, \text{\;that is\;} \phi^\ep=-\ep\ln(\rho^\ep),
\end{equation}
and $\eta^\ep$ denotes what remains to reconstruct $f^\ep$
\begin{equation}
 \label{etaep}
 \frac{f^\ep}{\rho^\ep M}=\e^{-\eta^\ep/\ep},
\end{equation}
such that
\begin{equation}
\label{conserved}
 \lla M \e^{-\eta^\ep/\ep}\rra=1.
\end{equation}
Once injected in \eqref{KinEqpsi}, this decomposition and the conserved quantity \eqref{conserved} yield 
\begin{equation}
 \label{KinEq_bis}
 1-\partial_t (\phi^\ep + \eta^\ep) - v\partial_x (\phi^\ep +\eta^\ep) +r \e^{-\phi^\ep/\ep}=(1+r) \e^{\eta^\ep/\ep}.
\end{equation}
With this reformulation of \eqref{KinEqpsi}, it is easy to highlight formally the asymptotic behavior of the model, presented in Theorem \ref{AsymptBehavior_thm}. Moreover, the formal computations we present here are also the basis of the construction of the scheme we propose. Theorem \ref{AsymptBehavior_thm} states that when $\ep$ goes to $0$, the distribution function $f^\ep$ converges to a distribution at equilibrium, since $\psi^\ep$ converges to a phase $\psi^0$ which does not depend on $v$, and which solves one of the limit equations \eqref{HJr0}-\eqref{HJnl}. The asymptotic behavior of $\phi^\ep$ and $\eta^\ep$ is formally, 
\begin{equation}
 \label{asymptbehavior_phieta}
 \lim\limits_{\ep\to 0}\phi^\ep=\psi^0, \;\;\;\; \lim\limits_{\ep\to 0}\eta^\ep=0,
\end{equation}
since $\phi^\ep$ is the Hopf-Cole transform of $\rho^\ep$.
Moreover, the equation safisfied by $\psi^0$ appears naturally from \eqref{KinEq_bis}. Indeed, it can be reformulated as follows
\[
 \frac{M}{1-\partial_t (\phi^\ep +\eta^\ep) -v\partial_x \left( \phi^\ep +\eta^\ep\right) + r\e^{-\phi^\ep/\ep}}=\frac{M\e^{-\eta^\ep/\ep}}{1+r},
\]
and it becomes, after an integration with respect to velocity
\begin{equation}
\label{1_integrated}
 \lla\frac{M}{1-\partial_t (\phi^\ep +\eta^\ep) -v\partial_x \left( \phi^\ep +\eta^\ep\right) + r\e^{-\phi^\ep/\ep}}\rra=\frac{1}{1+r}.
\end{equation}
If we replace $\partial_t \phi^\ep$ by $-H^\ep-r$, the structure of the limit Hamilton-Jacobi equations \eqref{HJr0}-\eqref{HJnl} appears. 
Indeed, we define $H^\ep$ by
\[
 \partial_t \phi^\ep+H^\ep+r=0,
\]
and it can be  injected in \eqref{1_integrated}. 
Thanks to \eqref{asymptbehavior_phieta}, if $r=0$ the previous expression formally goes to \eqref{HJr0} when $\ep$ goes to $0$, and if $\phi^0>0$, we obtain \eqref{HJnl} when $\ep$ goes to $0$. Note that \eqref{KinEq_bis} ensures that the previous equation \eqref{1_integrated} is equivalent to the conservation of the quantity \eqref{conserved}.

This formal asymptotic analysis paves the way for designing an AP scheme for \eqref{KinEq2bis}. Indeed, as for the usual micro-macro decomposition, the expressions \eqref{KinEq_bis} and \eqref{1_integrated} provide a coupled system of equations for $\phi^\ep$ and $\eta^\ep$, on which the asymptotic equation appears clearly 
\begin{equation}
 \label{start2}
 \left\{
 \begin{array}{l}
  \displaystyle\partial_t \phi^\ep+H^\ep+r=0 \vspace{4pt}
  \\
 \displaystyle 1-\partial_t (\phi^\ep + \eta^\ep) - v\partial_x (\phi^\ep +\eta^\ep) +r \e^{-\phi^\ep/\ep}=(1+r) \e^{\eta^\ep/\ep} \vspace{4pt}
  \\
 \displaystyle \lla\frac{M}{1-\partial_t \eta^\ep +H^ \ep +r -v\partial_x \left( \phi^\ep +\eta^\ep\right) + r\e^{-\phi^\ep/\ep}}\rra=\frac{1}{1+r}.
 \end{array}
 \right.
\end{equation}
To simplify the expression of the scheme, we will  rather remplace the last line by \eqref{conserved}.

The scheme for \eqref{start2} is written using a symmetric grid for the space variable
\begin{equation}
\label{Discret_x}
 x_i=-x_{\text{max}}+\frac{\dx}{2}+(i-1)\dx, \;\;i=1,\dots, N_x=2 N_x', 
\end{equation}
where $\dx=x_{\text{max}}/N_x'$, and for the velocity variable
\begin{equation}
\label{Discret_v}
 v_j=-v_{\text{max}}+\frac{\dv}{2}+(j-1)\dv, \;\;j=1,\dots, N_v=2 N_v', 
\end{equation}
with $\dv=v_{\text{max}}/N_v'$. The integrations in $v$ will be denoted by $\lla \cdot\rra_{N_v}$ and performed with a simple quadrature rule
\begin{equation}
\label{quadrature}
 \lla f\rra_{N_v}=\dv\sum\limits_{j=1}^{N_v} f(v_j).
\end{equation}
In order to write an upwind scheme for the transport in the $x$ variable, we define
\[
 v_j^+=\max(v_j,0), \;\; v_j^-=\min(v_j,0). 
\]
For the time variable, we will denote by $T$ the final time, $\dt=T/N_t$ the time step, and
\begin{equation}
\label{Discret_t}
 t_n=n\dt,\; n=0,\dots, N_t. 
\end{equation}
As in \cite{Bouin, BouinCalvez}, the initial data for $f^\ep$ will be considered at equilibrium $f(0,\cdot,\cdot)=M\rho_{\text{in}}^\ep=M\e^{-{\phi_{\text{in}}}/{\ep}}$, such that $\phi_{\text{in}}$ is a given function and $\eta^\ep(0,\cdot,\cdot)=0$. Denoting $\eta^n_{i,j}$ (resp. $\phi^n_i$) an approximation of $\eta^\ep(t_n,x_i,v_j)$ (resp. $\phi^\ep(t_n,x_i)$) for all $n\ge 1, i\in\ccl 1,N_x\ccr, j\in\ccl 1,N_v\ccr$, we propose the following scheme for $\phi^\ep$ and $\eta^\ep$
\begin{equation}
 \label{Schemenl}
 \left\{
 \begin{array}{l}
  \displaystyle \frac{\phi^{n+1}_i-\phi^n_i}{\dt}+H^{n+1}_i+r=0,   \vspace{4pt} \\
  \displaystyle 1+H^{n+1}_i+r -\frac{\eta^{n+1}_{i,j}-\eta^n_{i,j}}{\dt}-\left[ v\partial_x (\phi+\eta) \right]^n_{i,j} + r\e^{(r\dt -\phi^n_i)/\ep}\e^{\dt H^{n+1}_i/\ep}=(1+r)\e^{\eta^{n+1}_{i,j}/\ep} \vspace{4pt} \\
  \lla M \e^{-\eta^{n+1}_{i,j}/\ep} \rra_{N_v} =1,
 \end{array}
 \right.
\end{equation}
with the initialization $ \phi^0_i=\phi_{in}(x_i), \;\eta^{0}_{i,j}=0$, and where the transport part is computed with an upwind scheme 
\begin{equation}
\label{upwind}
  [v\partial_{x}(\phi+\eta)]^n_{i,j}
  =v_j^+\left(\frac{\phi^n_i-\phi^n_{i-1}}{\dx}+\frac{\eta^n_{i,j}-\eta^n_{i-1,j}}{\dx} \right)
  +v_j^-\left(\frac{\phi^n_{i+1}-\phi^n_{i}}{\dx}+\frac{\eta^n_{i+1,j}-\eta^n_{i,j}}{\dx} \right).
\end{equation}
Note that the dependence in $\ep$ of $\phi^n_i$ and $\eta^n_{i,j}$ has been omitted to simplify the notations.
To write this scheme, we implicited the stiff terms of the equation, to avoid the problems that may arise in the numerical resolution. From the maximum principle, $\phi^\ep$ is nonnegative in \eqref{start2}. Altough $\e^{-\phi^\ep/\ep}$ is then not stiff, we decided to implicit it in the scheme. Indeed, with this formulation, a discrete maximum principle can be proved for the scheme (see Section \ref{SecMP}), and we observed that it is not satisfied in the numerical tests if $\e^{-\phi^ \ep/\ep}$ is taken explicit. Eventually, the use of an upwind scheme \eqref{upwind} for the transport part is important, since it provides the monotonicity properties needed to prove that the limit scheme catches the viscosity solution of the asymptotic model \eqref{HJr0}-\eqref{HJnl}.

In practise, to compute $\left(\phi^{n+1}_i,\eta^{n+1}_{i,j}\right)_{(i,j)\in\ccl 1,N_x\ccr\times \ccl 1,N_v\ccr}$, one must solve the non-linear system 
\begin{equation}
 \label{DefetaHnl}
 \left\{
 \begin{array}{l}
   \displaystyle 1+H^{n+1}_i+r -\frac{\eta^{n+1}_{i,j}-\eta^n_{i,j}}{\dt}-\left[ v\partial_x (\phi+\eta) \right]^n_{i,j} + r\e^{(r\dt -\phi^n_i)/\ep}\e^{ H^{n+1}_i\dt/\ep}=(1+r)\e^{\eta^{n+1}_{i,j}/\ep} \vspace{4pt} \\
  \lla M \e^{-\eta^{n+1}_{i,j}/\ep} \rra_{N_v} =1,
 \end{array}
 \right.
\end{equation}
in which the unknowns are $\left(H^{n+1}_i,\eta^{n+1}_{i,j}\right)_{(i,j)\in\ccl 1,N_x\ccr \times \ccl 1,N_v\ccr}$.
To ensure the AP property of the scheme, the computational cost of the numerical resolution of this system must not depend on $\ep$. A Newton's method is used for its resolution, it is discussed in the next section. Eventually, we have the following proposition:
\begin{prop}
 \label{prop_scheme}
 Consider the scheme \eqref{Schemenl} defined for all $i\in\ccl 1,N_x\ccr, j\in\ccl 1, N_v\ccr, n\in \ccl 0, N_t-1\ccr$, and suppose that the accuracy and computational cost of the resolution of the non-linear problem \eqref{DefetaHnl} are independent of $\ep$. This scheme has the following properties:
 \begin{enumerate}
  \item The scheme is of order $1$ for any fixed $\ep>0$.
  \item The scheme enjoys a discrete maximum principle : Let $m>0$ such that  $0\le\phi^0_i\le m$, and suppose that $\eta^0_{i,j}=0$ for all $(i,j)\in\ccl 1,N_x\ccr\times \ccl 1,N_v\ccr$. Then, the following bounds hold: $\forall n\in\ccl 0,N_t\ccr, \forall (i,j)\in \ccl 1,N_x\ccr\times \ccl 1,N_v\ccr,$
 \begin{align*}
   & 0\le\phi^n_i\le m \\ & 0\le \phi^n_i+\eta^n_{i,j}\le m.
 \end{align*}
\item The scheme is AP: 
when the discretization parameters are fixed, the scheme tends when $\ep$ goes to $0$ to one of the following scheme:
\begin{align}
 \text{If\;\;} r=0:\;\; & \frac{\phi^{n+1}-\phi^n}{\dt}+H^{n+1}_i=0, \;\; \phi_i^0=\phi_{\text{in}},  \label{LimSchemer0} \\
 \text{If\;\;} r>0:\;\; & \min\left( \frac{\phi^{n+1}_i-\phi^{n}_i}{\dt}+H^{n+1}_i +r, \phi^{n+1}_i \right)=0, \;\; \phi_i^0=\phi_{\text{in}},  \label{LimSchemenl}
\end{align}
with $\phi^n_i\ge 0$ for all $(n,i)\in \ccl 1,N_t\ccr\times\ccl 1,N_x\ccr$, and where in both cases $H^{n+1}_i$ is defined by
\begin{equation}
 \label{LimDefHnl}
 \lla \frac{M}{1+H^{n+1}_i+r-\left[ v\partial_x \phi \right]^n_{i,j}} \rra_{N_v}=\frac{1}{1+r}.
\end{equation}
These two schemes catch respectively
the viscosity solutions of the asymptotic problems \eqref{HJr0}-\eqref{HJnl}, when the discretization parameters tend to zero.
 \end{enumerate}
\end{prop}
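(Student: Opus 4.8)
I would treat the three items in the stated order, but prove them in a slightly different logical order: first I would establish the $\ep$-uniform well-posedness of the nonlinear update together with the discrete maximum principle (item 2), since the $\ep$-independent bounds it gives are what makes the asymptotic analysis (item 3) possible, and then conclude with item 1 (a routine consistency check). Throughout I would work under the natural CFL restriction $\dt\max_j|v_j|\le\dx$ and assuming the quadrature \eqref{quadrature} is normalized so that $\lla M\rra_{N_v}=1$; both are implicit in a statement of this kind. It is convenient to set $\psi^n_{i,j}:=\phi^n_i+\eta^n_{i,j}$ and $A^n_{i,j}:=\psi^n_{i,j}-\dt\,[v\partial_x(\phi+\eta)]^n_{i,j}$. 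For item 1, I would simply insert the exact solution $\phi^\ep=-\ep\ln\rho^\ep$, $\eta^\ep=\psi^\ep-\phi^\ep$ of \eqref{start2} (smooth for $\ep>0$ fixed) into \eqref{Schemenl} and Taylor-expand: the time differences are first order in $\dt$, the upwind difference \eqref{upwind} is first order in $\dx$, and the only non-standard term, $r\,\e^{(r\dt-\phi^n_i)/\ep}\e^{\dt H^{n+1}_i/\ep}$, is by the algebraic identity $r\,\e^{(r\dt-\phi^n_i)/\ep}\e^{\dt H^{n+1}_i/\ep}=r\,\e^{-\phi^{n+1}_i/\ep}$ (which follows from the first line of \eqref{Schemenl}) exactly the implicit discretization of $r\e^{-\phi^\ep/\ep}$ appearing in \eqref{KinEq_bis}; hence the local truncation error is $O(\dt)+O(\dx)$, with a constant blowing up as $\ep\to0$.

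\textbf{Well-posedness and the maximum principle (item 2).} I would argue by induction on $n$. Since the transport term is explicit and $\phi^n,\eta^n$ are known, for fixed $i$ the equations for $j=1,\dots,N_v$ are coupled only through the scalar $\phi^{n+1}_i=\phi^n_i-\dt(H^{n+1}_i+r)$, so I would parametrize the solve by $\phi^{n+1}_i$. For each $j$, the second line of \eqref{Schemenl} has the form $(1+r)\e^{\eta/\ep}+\eta/\dt=\,$(an explicit quantity depending on $\phi^{n+1}_i$), whose left-hand side is strictly increasing in $\eta$; this defines $\eta^{n+1}_{i,j}(\phi^{n+1}_i)$ uniquely, decreasing in $\phi^{n+1}_i$, so that $F_i(\phi):=\lla M\,\e^{-\eta^{n+1}_{i,j}(\phi)/\ep}\rra_{N_v}$ is continuous and strictly increasing. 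Using the induction hypothesis $0\le\psi^n_{i,j}\le m$ together with the CFL condition (so that $A^n_{i,j}$ is a convex combination of values in $[0,m]$, hence lies in $[0,m]$) and $\e^{-\phi/\ep}\le1$ for $\phi\ge0$, one checks $F_i(0)\le1\le F_i(m)$; monotonicity then gives existence and uniqueness of $\phi^{n+1}_i\in[0,m]$ with $F_i(\phi^{n+1}_i)=1$, hence of $(H^{n+1}_i,\eta^{n+1}_{i,j})$, and the bound $0\le\phi^{n+1}_i\le m$. Feeding this back into the combination of the first two lines of \eqref{Schemenl}, which reads $\psi^{n+1}_{i,j}/\dt+(1+r)\e^{(\psi^{n+1}_{i,j}-\phi^{n+1}_i)/\ep}=1+A^n_{i,j}/\dt+r\e^{-\phi^{n+1}_i/\ep}$ with left-hand side strictly increasing in $\psi^{n+1}_{i,j}$, I would test $\psi^{n+1}_{i,j}=0$ and $\psi^{n+1}_{i,j}=m$: at $0$ the left-hand side is $\le$ the right-hand side since $\e^{-\phi^{n+1}_i/\ep}\le1\le1+A^n_{i,j}/\dt$; at $m$ it is $\ge$ the right-hand side since $(1+r)\e^{(m-\phi^{n+1}_i)/\ep}-r\e^{-\phi^{n+1}_i/\ep}\ge(1+r)-r=1$ and $m-A^n_{i,j}\ge0$. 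Hence $0\le\psi^{n+1}_{i,j}\le m$, which closes the induction.

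\textbf{Asymptotic preserving property (item 3).} Again by induction on $n$, the case $n=0$ being trivial. Assume $\phi^n_i\to\phi^{n,0}_i$ and $\eta^n_{i,j}\to0$ as $\ep\to0$. The maximum principle gives $\phi^{n+1}_i,\psi^{n+1}_{i,j}\in[0,m]$ and $H^{n+1}_i\in[-m/\dt-r,\,m/\dt-r]$, all bounded uniformly in $\ep$, so along a subsequence $\phi^{n+1}_i\to\phi^{n+1,0}_i$, $\eta^{n+1}_{i,j}\to E_j$, $H^{n+1}_i\to\mathcal H_i$, $r\e^{-\phi^{n+1}_i/\ep}\to\beta_i\in[0,r]$. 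Reading the second line of \eqref{Schemenl} as $(1+r)\e^{\eta^{n+1}_{i,j}/\ep}=\,$(a bounded quantity) forces $\e^{\eta^{n+1}_{i,j}/\ep}$ bounded, hence $\limsup\eta^{n+1}_{i,j}\le0$; the constraint $\lla M\e^{-\eta^{n+1}_{i,j}/\ep}\rra_{N_v}=1$ with $\inf_V M>0$ (assumption \eqref{Mv2}) forces $\e^{-\eta^{n+1}_{i,j}/\ep}$ bounded, hence $\liminf\eta^{n+1}_{i,j}\ge0$; so $E_j=0$ for all $j$. Dividing the second line of \eqref{Schemenl} by $\e^{\eta^{n+1}_{i,j}/\ep}$, multiplying by $M$ and summing against the quadrature (using \eqref{conserved}) gives $\lla M\big(1+H^{n+1}_i+r-\tfrac{\eta^{n+1}_{i,j}-\eta^n_{i,j}}{\dt}-[v\partial_x(\phi+\eta)]^n_{i,j}+r\e^{-\phi^{n+1}_i/\ep}\big)^{-1}\rra_{N_v}=\tfrac1{1+r}$; passing to the limit yields $\lla M\big(1+\mathcal H_i+\beta_i+r-[v\partial_x\phi^{n,0}]_{i,j}\big)^{-1}\rra_{N_v}=\tfrac1{1+r}$, so that $H^{n+1,0}_i:=\mathcal H_i+\beta_i$ satisfies exactly \eqref{LimDefHnl}, while the first line of \eqref{Schemenl} gives $\tfrac{\phi^{n+1,0}_i-\phi^{n,0}_i}{\dt}+H^{n+1,0}_i+r=\beta_i\ge0$. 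If $r=0$ then $\beta_i=0$ and this is \eqref{LimSchemer0}; if $r>0$, then either $\phi^{n+1,0}_i>0$ (forcing $\beta_i=0$, so the first argument of the minimum vanishes) or $\phi^{n+1,0}_i=0$ (and the first argument equals $\beta_i\ge0$), so in both cases \eqref{LimSchemenl} holds. Uniqueness of the solution of the limit schemes forces all subsequential limits to agree, giving full convergence. Finally, to see that these limit schemes capture the viscosity solutions of \eqref{HJr0}--\eqref{HJnl}, I would invoke the Barles--Souganidis framework \cite{BarlesSouganidis, CrandallIshiiLions}: the update $\phi^{n+1}_i=\max\{\phi^n_i-\dt(H^{n+1}_i+r),0\}$ with $H^{n+1}_i$ given by \eqref{LimDefHnl} is monotone in the data (nondecreasing in $\phi^n_{i\pm1}$ by the upwinding, in $\phi^n_i$ under the CFL condition), stable (the bounds $0\le\phi^n_i\le m$ carry over from item 2), and consistent (as $\dx,\dv\to0$, \eqref{LimDefHnl} converges to \eqref{Hnl}, so $H^{n+1}_i\to H(\partial_x\phi)$, and the $\max$-truncation produces the obstacle); combined with the comparison principle for \eqref{HJr0}, resp. the constrained problem \eqref{HJnl} (see \cite{Caillerie}), this gives locally uniform convergence to $\psi^0$.

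\textbf{Main obstacle.} The genuinely delicate step is the discrete maximum principle of item 2, precisely the \emph{simultaneous} control of $\phi^{n+1}_i$ and $\psi^{n+1}_{i,j}=\phi^{n+1}_i+\eta^{n+1}_{i,j}$: the two bounds are coupled through the conservation constraint \eqref{conserved} and the implicit exponentials, and a direct discrete comparison on $\psi$ alone breaks down because the transport is explicit. Parametrizing the nonlinear solve \eqref{DefetaHnl} by the scalar $\phi^{n+1}_i$, which converts the coupled monotone structure into two successive one-dimensional monotonicity arguments, is what unlocks it — and the same argument simultaneously yields the $\ep$-uniform well-posedness of \eqref{DefetaHnl} on which the whole AP analysis rests. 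A secondary subtlety, in item 3 for $r>0$, is that $\lim H^{n+1}_i$ of the scheme need not itself solve \eqref{LimDefHnl} where the constraint $\phi^{n+1}_i=0$ is active: it is the shifted quantity $\mathcal H_i+\beta_i$ that does, and keeping track of this shift is exactly what generates the obstacle structure of \eqref{HJnl}.
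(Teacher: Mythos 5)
Your proposal is correct, and on the two substantive items it reaches the paper's conclusions by a partly different route. For the discrete maximum principle, both arguments hinge on the same convex-combination identity (the recast second line \eqref{StabilityStartnl} under the CFL condition \eqref{CFL}), but the paper (Section \ref{SecMP}) proceeds by a sign dichotomy drawn from the constraint \eqref{conserved} — for each $i$ there is some $j$ with $\eta^{n+1}_{i,j}\le 0$ and some $j$ with $\eta^{n+1}_{i,j}\ge 0$ — followed by a contradiction argument for $\phi^{n+1}_i\ge 0$ and direct estimates for the remaining bounds; you instead parametrize the nonlinear system \eqref{DefetaHnl} by the scalar $\phi^{n+1}_i$, exploit the strict monotonicity of $\eta\mapsto(1+r)\e^{\eta/\ep}+\eta/\dt$ and of $\phi\mapsto\lla M\e^{-\eta(\phi)/\ep}\rra_{N_v}$, and locate the root in $[0,m]$ by the intermediate value theorem. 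Your version buys something the paper does not prove: existence and uniqueness of the solution of \eqref{DefetaHnl}, uniformly in $\ep$ (the paper only assumes the Newton solve of Section \ref{SecNLS} behaves well), at the price of a slightly longer chain of monotonicity checks; the paper's argument is shorter but purely a priori. For the AP limit, your subsequential-limit bookkeeping with $\beta_i=\lim r\e^{-\phi^{n+1}_i/\ep}$ and the shifted Hamiltonian $\mathcal H_i+\beta_i$ makes explicit how the obstacle structure of \eqref{LimSchemenl} and \eqref{LimDefHnl} emerges, which Section \ref{SecAP} asserts rather tersely; conversely, for convergence to the viscosity solution you invoke Barles--Souganidis and state monotonicity of the limit update without verification, whereas the paper verifies the hypotheses of the Crandall--Lions result (Theorem \ref{CrandallLionsThmnl}) concretely, computing the partial derivatives of the update map by the implicit function theorem to get the bounds $0\le \partial F/\partial\phi_{i\pm1}\le v_{\text{max}}\dt/\dx$ and $\partial F/\partial\phi_i\ge 1-v_{\text{max}}\dt/\dx$ under CFL — that computation is the one step your sketch leaves implicit and would need to be carried out, since $H^{n+1}_i$ enters only implicitly through \eqref{LimDefHnl}; the same caveat applies to the constrained case \eqref{HJnl}, where the cited theorem covers the unconstrained equation and the truncation $\phi^{n+1}_i=\max\{\phi^n_i-\dt(H^{n+1}_i+r),0\}$ must be handled as you indicate.
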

\begin{rmq}
 The numerical tests we propose in  Section \ref{Sec_Scheme_UA} suggest that the scheme enjoys the UA property.
\end{rmq}
The consistency and the order of the scheme for fixed $\ep$ are clear, since we only used finite differences to write it. The second point of this proposition is proved in Section \ref{SecMP} and the AP character of the scheme is proved in Section \ref{SecAP}. The properties claimed in this proposition are illustrated by various numerical tests in Section \ref{SecTests}.


\section{Resolution of the non-linear system}
\label{SecNLS}

The scheme \eqref{Schemenl} proposed in the previous section is both non-linear and
implicit.
In practice, being given the values $\phi^n_i,\; \eta^n_{i,j}$, the numerical resolution of the non-linear system \eqref{DefetaHnl} is needed to compute $\phi^{n+1}_i,\;\eta^{n+1}_{i,j}$. Furthermore, since the system \eqref{DefetaHnl} depends on $\ep$, and contains  some stiff terms, its numerical resolution may become costly when $\ep$ goes to $0$. This would break the AP property, as the computational cost is expected not to increase when $\ep$ decreases. In this section, we explain how 
the Newton method can be tuned to resolve this issue,
and how it can be implemented to make the computational cost independent on the stiffness of the problem.

The system \eqref{DefetaHnl} contains $N_x(N_v+1)$ unknowns, for the same number of equations. 
We propose 
to consider it as $N_x$ systems of $N_v+1$ unknowns, each of them being numerically solved with  Newton's method. Indeed, if $i_0\in\ccl1,N_x\ccr$ is fixed, the system 
\begin{equation}
 \label{GHstartnl}
 \left\{
 \begin{array}{l}
   \displaystyle 1+H^{n+1}_{i_0}+r -\frac{\eta^{n+1}_{{i_0},j}-\eta^n_{{i_0},j}}{\dt}-\left[ v\partial_x (\phi+\eta) \right]^n_{i_0,j} + r\e^{(r\dt -\phi^n_{i_0})/\ep}\e^{\dt H^{n+1}_{i_0}/\ep}=(1+r)\e^{\eta^{n+1}_{i_0,j}/\ep} \vspace{4pt} \\
  \lla M \e^{-\eta^{n+1}_{i_0,j}/\ep} \rra_{N_v} =1,
 \end{array}
 \right.
\end{equation}
where the unknowns are $\left(\left(\eta^{n+1}_{i_0,j}\right)_{j\in\ccl 1,N_v\ccr},H^{n+1}_{i_0}\right)$, is a closed system of $N_v+1$ unknowns and $N_v+1$ equations, which can be reformulated as follows 
\[
 F\left(\left(\eta^{n+1}_{i_0,j}\right)_{j\in\ccl1,N_v\ccr},H^{n+1}_{i_0}\right)=0,
\]
with $F:\R^{N_v+1}\longrightarrow\R^{N_v+1}$. The Jacobian matrix of $F$ can be computed explicitly to apply Newton's method. It writes
\begin{equation}
 \label{JacFnl}
 DF\left(\left(\eta^{n+1}_{i_0,j}\right)_{j\in\ccl1,N_v\ccr},H^{n+1}_{i_0}\right)=
 \begin{pmatrix}
  \alp_1 & 0 	& \cdots & 0	 & \delta_1\\
  0	 & \alp_2& \ddots	& \vdots & \vdots \\ 
  \vdots & \ddots & \ddots & 0 & \delta_{N_v-1} \\
  0 & \cdots & 0 & \alp_{N_v} & \delta_{N_v} \\
  \gamma_1 & \cdots &\gamma_{N_v-1} & \gamma_{N_v} & 0
 \end{pmatrix},
\end{equation}
where the coefficients $\alp_j, \gamma_j, \delta_j$ are defined for $j\in\ccl 1,N_v\ccr$ by
 \[
 \alp_j=-\frac{1}{\dt}-\frac{1+r}{\ep}\e^{-\eta^{n+1}_{i,j}/\ep},\;\;
 \delta_j=1+\frac{r\dt}{\ep}\e^{(r\dt  -\phi^n_i)/\ep}\e^{H^{n+1}_i\dt/\ep},\;\;
 \gamma_j= -\frac{\dv}{\ep}M(v_j)\e^{-\eta^{n+1}_{i,j}/\ep}.
 \]
Note that their dependence in $i_0$ is omitted to simplify the notations. 
Since the coefficients of \eqref{JacFnl} are stiff when $\ep$ is small, it is more convenient to compute analytically the inverse of this matrix, instead of doing it numerically. Indeed, this can be computed explicitly, and it reads
\begin{equation}
 \label{InvDFnl}
\frac{1}{S}
\begin{pmatrix}
\displaystyle \frac{S}{\alp_1}-\frac{\delta_1}{\alpha_1}\frac{\gamma_1}{\alp_1} \hspace{2pt}  & 
\displaystyle -\frac{\delta_1}{\alp_1} \frac{\gamma_2}{\alp_2}  \hspace{2pt} & 
\displaystyle \dots \hspace{2pt} & 
\displaystyle - \frac{\delta_1}{\alp_1} \frac{\gamma_{N_v}}{\alp_{N_v}} \hspace{2pt}& 
\displaystyle\frac{\delta_1}{\alp_1} \vspace{6pt}\\
\displaystyle- \frac{\delta_2}{\alp_2} \frac{\gamma_1}{\alp_1} & 
\displaystyle\frac{S}{\alp_2} -\frac{\delta_2}{\alp_2}\frac{\gamma_2}{\alp_2}   & 
\displaystyle\cdots & \displaystyle -\frac{\delta_2}{\alp_2} \frac{\gamma_{N_v}}{\alp_{N_v}} & 
\displaystyle\frac{\delta_2}{\alp_2} \vspace{2pt}\\
& & & & \\
\displaystyle\vdots &\displaystyle \ddots & \displaystyle\ddots  & \displaystyle\vdots & \displaystyle\vdots \vspace{2pt}\\
& & & & \\
 \displaystyle- \frac{\delta_{N_v}}{\alp_{N_v}} \frac{\gamma_1}{\alp_1} & \displaystyle\cdots & \displaystyle-\frac{\delta_{N_v-1}}{\alp_{N_v}} \frac{\gamma_{N_v-1}}{\alp_{N_v-1}} &  \displaystyle\frac{S}{\alp_{N_v}} -     \frac{\delta_{N_v}}{\alp_{N_v}} \frac{\gamma_{N_v}}{\alp_{N_v}} &\displaystyle  \frac{\delta_{N_v}}{\alp_{N_v}} \vspace{6pt}\\
 \displaystyle\frac{\gamma_1}{\alp_1} & \displaystyle\cdots   & \displaystyle\frac{\gamma_{N_v-1}}{\alp_{N_v-1}} & \displaystyle \frac{\gamma_{N_v}}{\alp_{N_v}} & -1
\end{pmatrix},
\end{equation}
where 
\[
 S=\sum\limits_{j=1}^{N_v} \frac{\gamma_j}{\alp_j}\delta_j.
\]
Having an explicit formula for $\left(DF\left(\left(G^{n+1}_{i_0,j}\right)_{j\in\ccl1,N_v\ccr},H^{n+1}_{i_0}\right)\right)^{-1}$ presents two advantages. First of all, it reduces the computational cost by avoiding the numerical inversion of \eqref{JacFnl}, which may be of large dimension. 
One can also notice that the construction of the matrix \eqref{InvDFnl} can be done with a moderate computational cost, since it is mostly made of an assembly of the vectors $\left(\delta_i/\alp_i\right)_i$, $\left(1/\alp_i\right)_i$, and $\left( \gamma_i/\alp_i\right)_i$.
But the main advantage is that the coefficients of \eqref{InvDFnl}, which contain
\[
 \frac{\gamma_j}{\alp_j}=-\frac{\dt\dv M(v_j)\e^{-\eta^{n+1}_{i_0,j}}/\ep}{\ep+(1+r)\dt \e^{\eta^{n+1}_{i_0,j}/\ep}},
\]
and
\[
 \frac{\delta_j}{\alp_j}=-\frac{\ep \dt +r\dt^2 \e^{(r\dt -\phi^n_i)/\ep}\e^{ H^{n+1}_i\dt/\ep}}{\ep+(1+r)\dt\e^{\eta^{n+1}}_{i,j}/\ep},
\]
do not present any singularity  as $\ep$ goes to $0$. Indeed, thanks to the discrete maximum principle that is proved in Section \ref{SecMP},  $\e^{(r\dt -\phi^n_i+ H^{n+1}_i\dt)/\ep}$ is bounded with respect to $\ep$. 
In fact, the discrete maximum principle states that
$\phi^{n+1}_i$ defined by the first line of \eqref{Schemenl} as 
\[
 \phi^{n+1}_i=\phi^n_i-r\dt  - H^{n+1}_i\dt,
\]
remains nonnegative.
The proof of the AP character of the scheme in Section \ref{SecAP}
also states as a preliminary result that
$\e^{{\eta^{n+1}_{i_0,j}}/{\ep}}$ and $\e^{-{\eta^{n+1}_{i_0,j}}/{\ep}}$  are bounded with respect to $\ep$.
Using \eqref{InvDFnl} in the numerical computations will avoid singularities in the numerical computation of the inverse of \eqref{JacFnl}, that may appear for the small values of $\ep$. 

As a conclusion, we propose to compute  the solution of \eqref{GHstartnl}  with   the Newton method initialized by
\[
[\eta, H]^0:=
\left(\left(\eta^{n}_{i_0,j}\right)_{j\in\ccl 1,N_v\ccr}, H^n_{i_0}\right) 
\]
and with the iterations 
\[
[\eta, H]^{k+1}=[\eta, H]^k-\left(DF\left([\eta, H]^k\right)\right)^{-1} [\eta, H]^k.
\]
Thanks to the discussion above, the accuracy and the computational cost of the resolution of the system is then independant of $\ep$, which is a necessary condition for the scheme \eqref{Schemenl} to enjoy the AP property.


\section{Discrete maximum principle}
\label{SecMP}

The scheme \eqref{Schemenl} has been 
designed including an upwind choice to deal with the transport terms.
This ensures 
the consistency of the scheme for a given $\ep$, as well as its stability under a CFL condition 
\begin{equation}
 \label{CFL}
 v_{\text{max}}\frac{\dt}{\dx}<1.
\end{equation}
The stiff terms in $\ep$ have been treated implicitly to ensure the stability of the scheme when $\ep$ decreases.
Notice that this implies the AP property as well, as we shall see in the next section.
Moreover, the scheme enjoys a discrete maximum principle, which is the discrete version of the maximum principle  \eqref{MaxPrincCont} proved in \cite{Bouin, BouinCalvez} in the continuous case. In this section, we prove the 
the following proposition, which contains the second point of Proposition \ref{prop_scheme}.
\begin{prop}
\label{propMP}
Let $m>0$. Assume that for all $  i\in\ccl 1,\; N_x\ccr,   j\in\ccl 1,N_v\ccr, 0\le  \phi^0_i \le m$ and $\eta^0_{i,j}=0$, then for all $ n\in \ccl 0, N_t-1\ccr,$ and for all $(i,j)\in \ccl 1, N_x\ccr\times\ccl1,N_v\ccr$, the following properties hold : 
 \begin{enumerate}
  \item $0\le\phi^{n+1}_i\le m$,
  \item $ 0\le \eta^{n+1}_{i,j}+\phi^{n+1}_j\le m$,
  \item $  0  \le  \phi^{n+1}_{i}+\eta^{n+1}_{i,j}-\dt\left(1-\e^{\eta^{n+1}_{i,j}/\ep}\right)+r\dt\e^{\eta^{n+1}_{i,j}/\ep}\left( 1-\e^{-\left( \phi^{n+1}_i+\eta^{n+1}_{i,j} \right)/\ep} \right)\le m $.
 \end{enumerate}
\end{prop}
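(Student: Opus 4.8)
### Proof plan

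The plan is to argue by induction on $n$, proving the three bounds simultaneously. The induction hypothesis at level $n$ is that $0 \le \phi^n_i \le m$ and $0 \le \phi^n_i + \eta^n_{i,j} \le m$ for all $i,j$; the base case $n=0$ holds by assumption since $\eta^0_{i,j}=0$ and $0 \le \phi^0_i \le m$. The key observation is that the scheme \eqref{Schemenl} can be rewritten so that $\phi^{n+1}_i + \eta^{n+1}_{i,j}$ is an explicit function of the previous time step data plus a correction involving the (implicitly defined) quantities $\e^{\pm\eta^{n+1}_{i,j}/\ep}$ and $\e^{-(\phi^{n+1}_i+\eta^{n+1}_{i,j})/\ep}$. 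Concretely, adding the first line of \eqref{Schemenl} to the second and using $\phi^{n+1}_i = \phi^n_i - r\dt - H^{n+1}_i \dt$ and the relation $r\e^{(r\dt-\phi^n_i)/\ep}\e^{\dt H^{n+1}_i/\ep} = r\e^{-\phi^{n+1}_i/\ep}$, one obtains an identity of the form
\[
 \phi^{n+1}_i + \eta^{n+1}_{i,j} = \phi^n_i + \eta^n_{i,j} + \dt\bigl[v\partial_x(\phi+\eta)\bigr]^n_{i,j} + \dt\bigl(1 - (1+r)\e^{\eta^{n+1}_{i,j}/\ep} + r\e^{-\phi^{n+1}_i/\ep}\bigr),
\]
which, after isolating the terms, is exactly the quantity appearing in item (3). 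So the first step is to derive this algebraic identity cleanly, identifying item (3)'s quantity $w^{n+1}_{i,j} := \phi^{n+1}_i + \eta^{n+1}_{i,j} - \dt(1 - \e^{\eta^{n+1}_{i,j}/\ep}) + r\dt\e^{\eta^{n+1}_{i,j}/\ep}(1 - \e^{-(\phi^{n+1}_i+\eta^{n+1}_{i,j})/\ep})$ with an explicit convex-combination-like expression in the level-$n$ data and the upwind transport term.

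Next I would establish item (3), i.e. $0 \le w^{n+1}_{i,j} \le m$. The upper bound: $w^{n+1}_{i,j}$ is, modulo the nonnegative implicit correction terms, a convex combination (under the CFL condition \eqref{CFL}) of $\phi^n_{i-1}+\eta^n_{i-1,j}$, $\phi^n_i+\eta^n_{i,j}$, $\phi^n_{i+1}+\eta^n_{i+1,j}$ — each $\le m$ by the induction hypothesis — plus terms whose sign must be controlled. The standard trick here is monotonicity: one shows the map defining $w^{n+1}$ is nondecreasing in each argument $\phi^n_{k}+\eta^n_{k,j}$ thanks precisely to the upwind discretization and the CFL condition, so that replacing each argument by its bound $m$ (respectively $0$) yields the bound on $w^{n+1}$; the exponential correction terms, evaluated at the constant state, vanish or have the right sign. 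The lower bound is symmetric. This is where the upwind choice \eqref{upwind} and CFL \eqref{CFL} are genuinely used.

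Then I would pass from the bound on $w^{n+1}_{i,j}$ to items (1) and (2) at level $n+1$. For item (2): the function $s \mapsto s - \dt(1 - \e^{\sigma/\ep}) + r\dt\,\e^{\sigma/\ep}(1 - \e^{-s/\ep})$ relating $s = \phi^{n+1}_i+\eta^{n+1}_{i,j}$ to $w^{n+1}_{i,j}$ (for fixed $\sigma = \eta^{n+1}_{i,j}$) is strictly increasing in $s$ and maps $[0,m]$ onto an interval containing $[0,m]$ — more precisely one checks that $s=0 \mapsto$ something $\le 0$ when... actually the clean way: show directly that $w^{n+1}_{i,j} \ge 0 \Rightarrow \phi^{n+1}_i+\eta^{n+1}_{i,j} \ge 0$ and $w^{n+1}_{i,j} \le m \Rightarrow \phi^{n+1}_i+\eta^{n+1}_{i,j} \le m$ by exploiting that the correction $-\dt(1-\e^{\sigma/\ep}) + r\dt\,\e^{\sigma/\ep}(1-\e^{-s/\ep})$ is nonpositive when $s \le 0$ and... (here one uses the conserved constraint $\lla M\e^{-\eta/\ep}\rra_{N_v}=1$, which forces $\e^{\eta^{n+1}_{i,j}/\ep} \ge \inf M / \lla 1 \rra$-type bounds, so the correction terms are genuinely controlled — this is the monotonicity argument for the scalar nonlinearity). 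For item (1): summing the constraint $\lla M\e^{-\eta^{n+1}/\ep}\rra_{N_v}=1$ against the identity, or more directly averaging the second line of \eqref{Schemenl} in $v$ using \eqref{conserved}, recovers $\phi^{n+1}_i = \phi^n_i - r\dt - H^{n+1}_i\dt$ and a relation that forces $H^{n+1}_i$ to lie in a range guaranteeing $0 \le \phi^{n+1}_i \le m$; alternatively, item (1) follows from item (2) by the Jensen-type inequality $\e^{-\phi^{n+1}_i/\ep} = \e^{-\phi^{n+1}_i/\ep}\lla M\e^{-\eta^{n+1}/\ep}\rra_{N_v} = \lla M\e^{-(\phi^{n+1}_i+\eta^{n+1})/\ep}\rra_{N_v}$, so bounds on $\phi^{n+1}+\eta^{n+1}$ transfer to bounds on $\phi^{n+1}$.

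The main obstacle I anticipate is the nonlinearity in the second step: unlike a linear scheme where monotonicity is read off immediately from the coefficients, here $w^{n+1}_{i,j}$ depends on $\phi^{n+1}_i, \eta^{n+1}_{i,j}$ implicitly through exponentials, so one cannot simply write $w^{n+1}$ as an explicit monotone function of level-$n$ data. The resolution is to treat the implicit part carefully — likely showing first that the nonlinear system \eqref{DefetaHnl} is well-posed with $\e^{\pm\eta^{n+1}_{i,j}/\ep}$ controlled (using \eqref{conserved} and $\inf M > 0$), and then that the scalar map $s \mapsto w$ is a bijection of $[0,m]$ onto an interval $\supseteq [0,m]$, so that the a priori bound $w^{n+1}_{i,j}\in[0,m]$ produced by the (explicit, monotone) dependence on level-$n$ data can be inverted. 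Getting the signs right in the exponential correction terms $-\dt(1-\e^{\eta/\ep})+r\dt\,\e^{\eta/\ep}(1-\e^{-(\phi+\eta)/\ep})$ — in particular verifying they don't push $w$ outside $[0,m]$ — is the delicate bookkeeping, and this is presumably why item (3) is stated in exactly this form: it is the quantity that is manifestly a monotone function of the level-$n$ unknowns, with items (1)–(2) extracted from it afterwards.
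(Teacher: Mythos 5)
Your setup matches the paper's: induction on $n$, the algebraic identity obtained by substituting $H^{n+1}_i+r=-(\phi^{n+1}_i-\phi^n_i)/\dt$ into the second line of \eqref{Schemenl} (so that $r\e^{(r\dt-\phi^n_i)/\ep}\e^{\dt H^{n+1}_i/\ep}=r\e^{-\phi^{n+1}_i/\ep}$), and the observation that the quantity in item (3) is then, under the CFL condition \eqref{CFL}, a convex combination of level-$n$ values of $\phi+\eta$; note that the upwind choice makes it a two-point combination, $(i,j)$ with $(i-1,j)$ or $(i+1,j)$ according to the sign of $v_j$, not three-point, and the transport term enters your displayed identity with the wrong sign. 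Item (3) then follows exactly as in the paper, with no extra monotonicity argument needed.

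The gap is in the passage from (3) to (1) and (2). You prove (2) first, by inverting the increasing map $s\mapsto s-\dt\left(1-\e^{\sigma/\ep}\right)+r\dt\,\e^{\sigma/\ep}\left(1-\e^{-s/\ep}\right)$ at fixed $\sigma=\eta^{n+1}_{i,j}$, and then deduce (1) from (2) via $\e^{-\phi^{n+1}_i/\ep}=\lla M\e^{-(\phi^{n+1}_i+\eta^{n+1}_{i,j})/\ep}\rra_{N_v}$. But the inversion only closes when the sign of $\eta^{n+1}_{i,j}$ is favorable: the value of the map at $s=0$ is $\dt(\e^{\sigma/\ep}-1)$, which is $\le 0$ only for $\sigma\le0$, so for $\sigma\ge0$ the lower bound $s\ge0$ is obtained only by already knowing $\phi^{n+1}_i\ge0$, i.e.\ item (1); symmetrically, the upper bound for $\sigma<0$ needs $\phi^{n+1}_i\le m$. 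Since your item (1) is extracted from item (2), the argument is circular precisely at these cases, which is where your sketch trails off. The quantitative bound drawn from \eqref{conserved}, $\e^{-\eta^{n+1}_{i,j}/\ep}\le 1/(\dv M(v_j))$, only yields $\eta^{n+1}_{i,j}\ge\ep\ln(\dv M(v_j))$ and supplies no sign information about $\phi^{n+1}_i$. The paper's missing ingredient is qualitative: since $\lla M\rra_{N_v}=1$, the constraint $\lla M\e^{-\eta^{n+1}_{i,j}/\ep}\rra_{N_v}=1$ forces, for each $i$, the existence of indices $j_-$ and $j_+$ with $\eta^{n+1}_{i,j_-}\le0\le\eta^{n+1}_{i,j_+}$. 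Item (1) is then proved \emph{before} item (2): if $\phi^{n+1}_{i_0}<0$, evaluating the item-(3) lower bound at $j_-$ exhibits a sum of negative terms, a contradiction, so $\phi^{n+1}_i\ge0$; evaluating the item-(3) upper bound at $j_+$ gives $\phi^{n+1}_i\le m$. Only afterwards is item (2) obtained by the case split on the sign of $\eta^{n+1}_{i,j}$, using $1-\e^{-x/\ep}\le x/\ep$ and $x/\ep\le\e^{x/\ep}-1$ together with $m-\phi^{n+1}_i\ge0$. Without this preliminary sign dichotomy your plan cannot be completed as stated.
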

%
\begin{proof} 
 As a preliminary result, let us remark that for a fixed $i\in\ccl 1,N_x\ccr$, there exists at least one index $j_-$ such that $\eta^{n+1}_{i,j_-}\le 0$ and one index $j_+$ such that $\eta^{n+1}_{i,j_+}\ge 0$.  
Otherwise, the equality 
 \[
 \lla M \e^{-\eta^{n+1}_{i,j}/\ep} \rra =1,
\]
can  not be fulfilled, since $\lla M\rra=1$.

The proof of this proposition is done by induction.
 Let us fix $n\in\ccl 0, N_t-1\ccr$ and suppose that for all 
 $(i,j)\in \ccl 1, N_x\ccr\times\ccl1,N_v\ccr$, $0\le \phi^{n}_i+\eta^n_{i,j}\le m$. The third point of the proposition comes by fixing $j\in\ccl 1,N_v\ccr$  such that $v_j\ge 0$ (the demonstration is similar for $v_j< 0$). Indeed, the second line of \eqref{Schemenl} can be recast as follows
\[
 \frac{\phi^{n+1}_{i}-\phi^n_i}{\dt}+\frac{\eta^{n+1}_{i,j}-\eta^n_{i,j}}{\dt}+v_j\frac{\phi^n_i-\phi^n_{i-1}}{\dx}+v_j\frac{\eta^n_{i,j}-\eta^n_{i-1,j}}{\dx}+r\e^{\eta^{n+1}_{i,j}/\ep}=1-\e^{{\eta^{n+1}_{i,j}}/\ep}+r\e^{-\phi^{n+1}_i/\ep},
\]
or, equivalently
\begin{align}
\label{StabilityStartnl}
 \phi^{n+1}_{i}+\eta^{n+1}_{i,j}&-\dt\left(1-\e^{\eta^{n+1}_{i,j}/\ep}\right)+r\dt\e^{\eta^{n+1}_{i,j}/\ep}\left( 1-\e^{-\left( \phi^{n+1}_i+\eta^{n+1}_{i,j} \right)/\ep} \right)\\&=\left( 1-v_j\frac{\dt}{\dx}\right)\left( \phi^n_i+\eta^n_{i,j} \right)+v_j\frac{\dt}{\dx}\left( \phi^n_{i-1}+\eta^n_{i-1,j} \right). \nonumber
\end{align}
If $0\le \phi^n_i+\eta^n_{i,j}\le m$ for all $(i,j)\in\ccl 1,N_x\ccr\times \ccl1,N_v\ccr$, and under the CFL condition
\eqref{CFL},
the right-hand side of \eqref{StabilityStartnl} is a convex combination of quantities lying in the interval $[0,m]$. This yields the inequality
\[0  \le  \phi^{n+1}_{i}+\eta^{n+1}_{i,j}-\dt\left(1-\e^{\eta^{n+1}_{i,j}/\ep}\right)+r\dt\e^{\eta^{n+1}_{i,j}/\ep}\left( 1-\e^{-\left( \phi^{n+1}_i+\eta^{n+1}_{i,j} \right)/\ep} \right)\le m.\]

Let us suppose in addition that for all $i\in \ccl 1,N_x\ccr$, $\phi^n_i\ge 0$. The first point of the proposition is a consequence of the previous inequality. 
We argue by contradiction. Suppose that $\phi^{n+1}_{i_0}< 0$ for some 
$i_0$.
In addition, let $j_0\in\ccl 1,N_v\ccr$ such that $\eta^{n+1}_{i_0,j_0}\le0$. Then the inequality
%
 \begin{equation}
 \label{etapenl}
  0\le \phi_{i_0}^{n+1} +\eta^{n+1}_{i_0,j_0} -\dt\left(1-\e^{\eta^{n+1}_{i_0,j_0}/\ep}\right)
  +r\dt\e^{\eta^{n+1}_{i_0,j}/\ep}\left( 1-\e^{-\left( \phi^{n+1}_{i_0}+\eta^{n+1}_{i_0,j} \right)/\ep} \right),
 \end{equation}
cannot be fulfilled, since the right-hand side is a sum of negative terms. Hence, for all $j\in\ccl 1,N_v\ccr,\; \eta^{n+1}_{i_0,j}> 0$, but
this is contradictory with the preliminary result stated in the beginning of this proof.
Hence, for all $i\in \ccl 1,N_x\ccr$, $\phi^{n+1}_i\ge 0$. Let  $j_0$ such that $\eta^{n+1}_{i,j_0}\ge 0$. Considering \eqref{etapenl} for $(i,j_0)$, we obtain
\[
 \phi_i^{n+1}\le \phi_i^{n+1}+\eta_{i,j_0}^{n+1} +\dt \left(\e^{\eta_{i,j_0}^{n+1}/\ep}-1\right)+r\dt\e^{\eta^{n+1}_{i,j_0}/\ep}\left( 1-\e^{-\left( \phi^{n+1}_{i}+\eta^{n+1}_{i,j_0} \right)/\ep} \right) \le m,
\]
which gives the upper bound for $\phi_i^{n+1}$, and the first point of the proposition
\[
 0\le \phi^{n+1}_i\le m.
\]

Eventually, the second point of the proposition can be proved. Indeed, if 
$\eta^{n+1}_{i,j}\ge 0$, the inequality $0\le \phi^{n+1}_i+\eta^{n+1}_{i,j}$ follows immediately, and if $\eta^{n+1}_{i,j}<0$, we have
\begin{align*}
 0&\le \phi^{n+1}_i+\eta^{n+1}_{i,j} +\dt\left(\e^{\eta^{n+1}_{i,j}/\ep}-1\right)+r\dt\e^{\eta^{n+1}_{i,j}/\ep}\left( 1-\e^{-\left( \phi^{n+1}_i +\eta^{n+1}_{i,j} \right)/\ep}  \right)\\
 &\le \phi^{n+1}_i+\eta^{n+1}_{i,j} +r\dt \left( 1-\e^{-\left( \phi^{n+1}_i +\eta^{n+1}_{i,j} \right)/\ep}  \right) \\
 & \le \left(1+\frac{r\dt}{\ep}\right)\left( \phi^{n+1}_i+\eta^{n+1}_{i,j} \right),
\end{align*}
which proves the positivity of $\phi^{n+1}_{i}+\eta^{n+1}_{i,j}$. This implies also
\begin{align*}
 \phi^{n+1}_i+\eta^{n+1}_{i,j}+\dt\left(\e^{\eta^{n+1}_{i,j}/\ep}-1\right) &\le \phi^{n+1}_i+\eta^{n+1}_{i,j}+\dt\left(\e^{\eta^{n+1}_{i,j}/\ep}-1\right) +r\dt \e^{\eta^{n+1}_{i,j}/\ep}\left(1-\e^{-\left( \phi^{n+1}_i+\eta^{n+1}_{i,j} \right)/\ep}\right)
 \\&\le m
\end{align*}
Considering the function $f:x\mapsto x+\dt(\e^{x/\ep}-1)$, we remark that since $x/\ep\le \e^{x/\ep}-1$,
\[
 \eta^{n+1}_{i,j}+\dt (\e^{\eta^{n+1}_{i,j}/\ep}-1)\le m-\phi^{n+1}_i \;\;\Rightarrow\;\; \left(1+\frac{\dt}{\ep}\right)\eta^{n+1}_{i,j}\le m-\phi^{n+1}_{i} \;\;\Rightarrow\;\; \eta^{n+1}_{i,j}\le m-\phi^{n+1}_i, 
\]
because $m-\phi^{n+1}_i\ge 0$.
Eventually, we get the upper bound of $\phi^{n+1}_i+\eta^{n+1}_{i,j}$
\[
 \phi^{n+1}_i+\eta^{n+1}_{i,j}\le m.
\]
\end{proof}

\section{AP character}
\label{SecAP}

In this section, we prove that when $\ep$ goes to $0$, the quantity $\eta^n_{i,j}$, which depends on $\ep$ is such that $\eta^n_{i,j}/\ep$ is bounded uniformly with respect to $\ep$, and that the limit scheme for \eqref{Schemenl} when $\ep$ goes to $0$ is consistent with \eqref{HJnl}. Let us fix $n\in\ccl 0,N\ccr$, and $(i,j)\in\ccl 1,N_x\ccr\times \ccl 1,N_v\ccr$. Since
\[
 \lla M \e^{-\eta^n_{i,j}/\ep}\rra_{N_v} =1,
\]
where the discrete integration with respect to velocity has been defined in \eqref{quadrature}, and $\lla M\rra_{N_v}=1$, there exists a constant $c$ such that 
\[
  \e^{-\eta^n_{i,j}/\ep}\le c.
\]
Moreover, as
\[
 0\le \phi^n_{i}+\eta^n_{i,j}+\dt\left(\e^{\eta^n_{i,j}/\ep}-1\right)+r\dt \left(\e^{\eta^{n+1}_{i,j}/\ep}-\e^{-\phi^{n+1}_i/\ep}\right) \le m,
\]
with, 
\[
 0\le \phi^n_i+\eta^n_{i,j} \le m, \;\;\text{and}\;\; 0\le \phi^{n+1}_i\le m,
\]
 we obtain that
 \[
   \e^{\eta^{n}_{i,j}/\ep}\le \frac{m+\dt}{\dt}.
 \]
Hence, both $\e^{-\eta^n_{i,j}/\ep}$ and $\e^{\eta^n_{i,j}/\ep}$ are bounded independently of $\ep$, thanks to the maximum principle of the previous section. As a consequence, $\eta^n_{i,j}/\ep$ is bounded independently of $\ep$. In other words, $\eta^{n}_{i,j}$ vanishes when $\ep$ goes to $0$, which is the expected asymptotic behavior of $\eta^\ep$ at the continuous level \eqref{asymptbehavior_phieta}.
To find the limit scheme for \eqref{Schemenl}, it is more convenient to rewrite the third line of \eqref{Schemenl} as
\[
  \lla \frac{ M}{ 1+H^{n+1}_i+r-\frac{\eta^{n+1}_{i,j}-\eta^{n}_{i,j}}{\dt}-\left[v\partial_x(\phi+\eta)\right]^n_{i,j}+r\e^{-\phi^{n+1}_i/\ep}}\rra_{N_v} =\frac{1}{1+r}.
\]
These two expressions are equivalent, thanks to the second line of the scheme \eqref{Schemenl}. 
 Hence, when $\ep\to 0$ with fixed $\dt$, \eqref{Schemenl} becomes a scheme for $\phi^n_i$ only that writes
\begin{equation}
 \begin{array}{l}
 \displaystyle \frac{\phi^{n+1}-\phi^n}{\dt}=-H^{n+1}_i, \;\; \phi_i^0=\phi_{\text{in}}(x_i),
 \end{array}
\end{equation}
if $r=0$, and
\begin{equation}
 \begin{array}{l}
 \displaystyle \min\left( \frac{\phi^{n+1}_i-\phi^{n}_i}{\dt}+H^{n+1}_i +r,\; \phi^{n+1}_i \right)=0, \;\; \phi_i^0=\phi_{\text{in}}(x_i),
 \end{array}
\end{equation}
if $r>0$.
In both cases, the maximum principle of the second point of Prop. \ref{prop_scheme} ensures that $\phi^n_i\ge 0$, and  $H^{n+1}_i$ is defined implicitly for all $n\in\ccl 1, N-1\ccr$, and $i\in\ccl 1, N_x\ccr$, by
\begin{equation}
 \lla \frac{M}{1+H^{n+1}_i+r-\left[ v\partial_x \phi \right]^n_{i,j}} \rra=\frac{1}{1+r},
\end{equation}
where the space derivative $\left[ v\partial_x \phi \right]^n_{i,j}$ is computed with an upwind scheme as in \eqref{upwind}.

To prove that the scheme \eqref{Schemenl} enjoys the AP property, it remains to show that \eqref{LimSchemer0} and \eqref{LimSchemenl} 
catch the viscosity solution of \eqref{HJr0} and \eqref{HJnl}. To do so, we use a general result on numerical schemes for Hamilton-Jacobi equations, stated in \cite{CrandallLions}. It states that, for a scheme written in the general form 
\begin{equation}
 \label{GeneralSchemenl}
 \phi^{n+1}_i=F(\phi^n_{i-1},\phi^n_i,\phi^n_{i+1}), 
\end{equation}
that can also be written with a \emph{differenced form} 
\begin{equation}
\label{DifferencedFormnl}
 \phi^{n+1}_i=\phi^n_i- h\left(\frac{\phi^n_i-\phi^n_{i-1}}{\dx},\frac{\phi^n_{i+1}-\phi^n_i}{\dx}\right)\dt,
\end{equation}
the following theorem, adapted from \cite{CrandallLions}, holds
\begin{thm}
\label{CrandallLionsThmnl}
 Let $H:\R\longrightarrow\R$ be continuous, $\phi_{\tin}$ be bounded and Lipschitz continuous on $\R$ with Lipschitz constant $L$, and $\phi$ be the viscosity solution of
\begin{equation}
 \label{GeneralHJnl}
  \partial_t\phi+H\left(\partial_x \phi\right)=0,  \;\; \phi(0,x)=\phi_{\text{in}}(x).
\end{equation}
 Under the $CFL$ condition
 \begin{equation}
 \label{CFLnl}
 v_{\text{max}} \frac{\dt}{\dx}\le 1,
 \end{equation}
suppose that the scheme  \eqref{GeneralSchemenl} has a \emph{differenced form} \eqref{DifferencedFormnl}, that the numerical Hamiltonian $h$ is consistent
with \eqref{GeneralHJnl} 
with $H$, 
and locally Lipschitz continuous. Suppose also that the scheme $F$ defined in \eqref{GeneralSchemenl} is a nondecreasing function of each argument. 
Then, there exists a constant $c$ depending only on $\sup{|\phi_{\text{in}}|}$, $L$, $h$ and $T$ such that
\[
 \forall n\in\ccl 1,N\ccr, \forall i\in\ccl 1,N_x\ccr, \;\; \left| \phi^n_i-\phi(t_n,x_i)\right|\le c\sqrt{\dx+\dt}.
\]
\end{thm}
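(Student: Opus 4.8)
The plan is to follow the now-classical argument of Crandall and Lions for monotone finite difference schemes, of which \eqref{GeneralSchemenl}--\eqref{DifferencedFormnl} becomes an instance once the consistency, monotonicity and $CFL$ hypotheses are granted. The proof splits into a priori bounds obtained from monotonicity, and a quantitative comparison with the viscosity solution via doubling of variables.

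First I would extract the two a priori bounds that monotonicity provides. Because the numerical Hamiltonian $h$ depends on its arguments only through the discrete slopes $(\phi^n_i-\phi^n_{i-1})/\dx$ and $(\phi^n_{i+1}-\phi^n_i)/\dx$, the map $F$ commutes with the addition of a constant and with spatial translations; combined with the assumption that $F$ is nondecreasing in each variable (effective under the $CFL$ condition \eqref{CFLnl}), this yields a discrete comparison principle: $u^0_i\le w^0_i$ for all $i$ implies $u^n_i\le w^n_i$ for all $n,i$. Applying it to $\phi^0$ and to its spatial shift $i\mapsto\phi^0_{i+1}$ propagates the Lipschitz bound $|\phi^n_{i+1}-\phi^n_i|\le L\dx$ for every $n$; this in turn, through the differenced form \eqref{DifferencedFormnl}, gives the time bound $|\phi^{n+1}_i-\phi^n_i|\le C_h\dt$ with $C_h=\sup_{|p|,|q|\le L}|h(p,q)|$, and hence $\|\phi^n\|_\infty\le \sup|\phi_{\tin}|+C_hT$. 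These bounds are precisely what makes the maximisation below well posed on the unbounded domain $\R$.

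Then I would run the doubling-of-variables comparison between the grid function $(\phi^n_i)$ (extended by interpolation) and the viscosity solution $\phi$ of \eqref{GeneralHJnl}. Fixing small parameters $\sigma,\tau,\lambda>0$, consider
\[
 M=\max_{n,i}\ \max_{(s,y)}\Big\{\phi^n_i-\phi(s,y)-\frac{(x_i-y)^2}{2\sigma}-\frac{(t_n-s)^2}{2\tau}-\lambda t_n\Big\},
\]
which is attained thanks to the Lipschitz and sup bounds. If the maximum is reached at $n=0$ it is controlled by $\sup|\phi_{\tin}|$, $L$, $\sigma$ and $\tau$. If it is reached at some $n\ge 1$, I would insert the scheme identity \eqref{DifferencedFormnl} at the maximising node to express $\phi^n_i$ through $\phi^{n-1}_{i-1},\phi^{n-1}_i,\phi^{n-1}_{i+1}$, use the monotonicity of $F$ to bound these three values by the corresponding shifted maximisers of $M$, and on the other side test the viscosity supersolution property of $\phi$ at $(s,y)$ against the smooth penalisation. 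Combining the two inequalities, and using the consistency $h(p,p)=H(p)$ together with the local Lipschitz continuity of $h$ to absorb the slope mismatches, produces a bound $\lambda\le C\big(\sigma+\tau+\tfrac{\dx^2}{\sigma}+\tfrac{\dt^2}{\tau}+\dx+\dt\big)$; the symmetric argument, with $\phi$ used as a subsolution, controls $\min(\phi-\phi^n)$.

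Finally I would optimise: choosing $\sigma\sim\dx$, $\tau\sim\dt$ and $\lambda$ of the order of the resulting right-hand side, and letting the quadratic penalisation terms collapse, converts the two-sided control of $M$ into $|\phi^n_i-\phi(t_n,x_i)|\le c\sqrt{\dx+\dt}$ with $c$ depending only on $\sup|\phi_{\tin}|$, $L$, $h$ and $T$. The delicate step is the comparison inside the maximisation: one must track carefully how the upwind slopes fed into $h$ differ from the gradient of the test function produced by the penalisation, and balance the consistency error of $h$ against the quadratic terms so that the exponents combine to exactly the $\sqrt{\dx+\dt}$ rate; this is the only place where the monotonicity of $F$ is used in an essential way, namely to pass from the three stencil values of $\phi^{n-1}$ to the single shifted maximiser. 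The rest — the a priori bounds and the bookkeeping — is routine once those ingredients are in place, and, as the authors indicate, the complete argument is that of \cite{CrandallLions}.
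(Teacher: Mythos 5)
The paper does not actually prove Theorem \ref{CrandallLionsThmnl}: it is imported, as stated, from \cite{CrandallLions}, and the surrounding text only verifies its hypotheses for the limit schemes \eqref{LimSchemer0}--\eqref{LimSchemenl} (existence of the differenced form, consistency $h(p,p)=H_{N_v}(p)$, local Lipschitz continuity of $h$ via the implicit function theorem, and monotonicity of $F$ under the CFL condition \eqref{CFLnl}). So there is no in-paper argument to compare against; what you have written is an outline of the original Crandall--Lions proof itself, and it should be judged on those terms.

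Your skeleton is the right one: monotonicity plus commutation with constants and translation invariance gives the $L^\infty$-contraction, hence propagation of the Lipschitz bound in space and the $O(\dt)$ bound in time, and then a Kuznetsov-type doubling of variables compares the grid function with the viscosity solution, using the scheme at the discrete maximizer and the viscosity sub/supersolution inequalities at the continuous one. The genuine problem is the quantitative balancing. With your intermediate bound $\lambda\le C\bigl(\sigma+\tau+\dx^2/\sigma+\dt^2/\tau+\dx+\dt\bigr)$ and the choice $\sigma\sim\dx$, $\tau\sim\dt$, you would conclude an $O(\dx+\dt)$ error, which is inconsistent with the $\sqrt{\dx+\dt}$ rate you then claim, and cannot be correct in general since the rate $1/2$ is sharp for monotone schemes with merely Lipschitz data. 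The terms actually produced by the doubling argument scale like $\sigma+\dx/\sigma$ (and $\tau+\dt/\tau$): the slope mismatch fed into the locally Lipschitz $h$ and the displacement between the grid maximizer and the continuous maximizer contribute at order $\dx/\sigma$, not $\dx^2/\sigma$. The correct optimization is therefore $\sigma\sim\sqrt{\dx}$, $\tau\sim\sqrt{\dt}$ (or, with $\dx/\dt$ held fixed by \eqref{CFLnl}, a single penalization parameter of order $\sqrt{\dt}$ as in \cite{CrandallLions}), which is exactly what yields the bound $c\sqrt{\dx+\dt}$. Apart from this mis-balancing, and the routine checks that the penalized maximum is attained (your a priori bounds do give this) and that the $n=0$ and $s=0$ cases are controlled by the Lipschitz estimates, your outline coincides with the standard argument the paper is invoking.
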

Here, the discretized Hamiltonian $H_{N_v}$ and the numerical Hamiltonian $h$ are implicitly defined. Indeed, if $p\in\R$, $H_{N_v}(p)$ is solution of 
\[
 \lla \frac{M}{1+r+H_{N_v}(p)-vp}\rra_{N_v} =\frac{1}{1+r},
\]
with $1+H_{N_v}(p)-vp>0$, and if $(p,q)\in\R^2$, $h(p,q)$ is solution of 
\[
 \lla \frac{M}{1+r+h(p,q)-v_+ p-v_-q}\rra_{N_v}=\frac{1}{1+r},
\]
with $1+r+h(p,q)-v_+p-v_-q>0$.
Therefore,  the implicit function theorem ensures that $H_{N_v}$ (resp. $h$) is a continuous and locally Lipschitz function of $p$ (resp. $(p,q)$). The consistency of $h$ with $H_{N_v}$ is a consequence of the fact that $h(p,p)=H_{N_v}(p)$. Note that we only consider the discrete integrations $\lla\cdot \rra_{N_v}$ for this result. The fact that the solution of \eqref{HJr0}-\eqref{HJnl} with $H_{N_v}$ converges to the solution of \eqref{HJr0}-\eqref{HJnl} with $H$ is a consequence of stability results for Hamilton-Jacobi equations (see \cite{Barles}).
To prove that the schemes \eqref{LimSchemer0}-\eqref{LimSchemenl} catch the viscosity solution of \eqref{HJr0}-\eqref{HJnl} with the integrations in velocity done with $\lla \cdot\rra_{N_v}$, it remains to show that it is nondecreasing in each argument. Considering \eqref{LimSchemer0}-\eqref{LimSchemenl} in the general form \eqref{GeneralSchemenl}, we compute the  partial derivatives of $F$ with the implicit function theorem. They read 
\begin{align*}
 &\frac{\partial F}{\partial \phi^n_{i-1}}\left(\phi^n_{i-1},\phi^n_i,\phi^n_{i+1}\right)
 =\frac{\dt}{\dx}
 \frac{\displaystyle \lla 
 \frac{v_+M}{\left(1+r+h\left(\frac{\phi^n_i-\phi^n_{i-1}}{\dx} ,\frac{\phi^n_{i+1}-\phi^n_{i}}{\dx} \right)-v_+\frac{\phi^n_i-\phi^n_{i-1}}{\dx}-v_-\frac{\phi^n_{i+1}-\phi^n_{i}}{\dx}\right)^2}
 \rra_{N_v}}{
 \displaystyle
 \lla 
 \frac{M}{\left(1+r+h\left(\frac{\phi^n_i-\phi^n_{i-1}}{\dx} ,\frac{\phi^n_{i+1}-\phi^n_{i}}{\dx} \right)-v_+\frac{\phi^n_i-\phi^n_{i-1}}{\dx}-v_-\frac{\phi^n_{i+1}-\phi^n_{i}}{\dx}\right)^2}
 \rra_{N_v}
 } 
 \\
 \vspace{5pt}
 & \frac{\partial F}{\partial \phi^n_{i}}\left(\phi^n_{i-1},\phi^n_i,\phi^n_{i+1}\right)
 =1-\frac{\dt}{\dx}
 \frac{\displaystyle
 \lla
 \frac{(v_+-v_-)M}{\left(1+r+h\left(\frac{\phi^n_i-\phi^n_{i-1}}{\dx} ,\frac{\phi^n_{i+1}-\phi^n_{i}}{\dx} \right)-v_+\frac{\phi^n_i-\phi^n_{i-1}}{\dx}-v_-\frac{\phi^n_{i+1}-\phi^n_{i}}{\dx}\right)^2}
 \rra_{N_v}}{
 \displaystyle
 \lla
 \frac{M}{\left(1+r+h\left(\frac{\phi^n_i-\phi^n_{i-1}}{\dx} ,\frac{\phi^n_{i+1}-\phi^n_{i}}{\dx} \right)-v_+\frac{\phi^n_i-\phi^n_{i-1}}{\dx}-v_-\frac{\phi^n_{i+1}-\phi^n_{i}}{\dx}\right)^2}
 \rra_{N_v}}
 \\
 &\frac{\partial F}{\partial \phi^n_{i+1}}\left(\phi^n_{i-1},\phi^n_i,\phi^n_{i+1}\right)
 =\frac{\dt}{\dx}
 \frac{\displaystyle \lla 
 \frac{\left|v_-\right|M}{\left(1+r+h\left(\frac{\phi^n_i-\phi^n_{i-1}}{\dx} ,\frac{\phi^n_{i+1}-\phi^n_{i}}{\dx} \right)-v_+\frac{\phi^n_i-\phi^n_{i-1}}{\dx}-v_-\frac{\phi^n_{i+1}-\phi^n_{i}}{\dx}\right)^2}
 \rra_{N_v}}{
 \displaystyle
 \lla 
 \frac{M}{\left(1+r+h\left(\frac{\phi^n_i-\phi^n_{i-1}}{\dx} ,\frac{\phi^n_{i+1}-\phi^n_{i}}{\dx} \right)-v_+\frac{\phi^n_i-\phi^n_{i-1}}{\dx}-v_-\frac{\phi^n_{i+1}-\phi^n_{i}}{\dx}\right)^2}
 \rra_{N_v}
 },
\end{align*}
which gives bounds on the partial derivatives of $F$,
\[
\begin{array}{r c l}
 \displaystyle 0& \displaystyle\le \frac{\partial F}{\partial \phi_{i-1}} \le&\displaystyle v_{\text{max}}\frac{\dt}{\dx} \vspace{4pt}
 \\ 
 \displaystyle 1-v_{\text{max}}\frac{\dt}{\dx} &\displaystyle\le \frac{\partial F}{\partial \phi_{i}} \le& \displaystyle1 \vspace{4pt}
 \\
  \displaystyle0&\displaystyle\le \frac{\partial F}{\partial \phi_{i+1}} \le& \displaystyle v_{\text{max}}\frac{\dt}{\dx}.
\end{array}
\]
The CFL condition \eqref{CFLnl} ensures the second one to be nonnegative. The schemes \eqref{LimSchemer0}-\eqref{LimSchemenl} then fulfill the hypothesis of Theorem \ref{CrandallLionsThmnl}.
Therefore, it converges to the viscosity solution of \eqref{HJr0}-\eqref{HJnl} with the integrations in velocity done with $\lla \cdot\rra_{N_v}$, when the discretization parameters $\dt$ and $\dx$ tend to $0$.

\section{Numerical tests}
\label{SecTests}

In this section, we present some numerical tests for the scheme \eqref{Schemenl}. In all the simulations, we will use the discretizations \eqref{Discret_x}-\eqref{Discret_v}-\eqref{Discret_t}, with $x_{\text{max}}=1, \dv=1.25\cdot 10^{-2}$, and $v_{\max}=1$. The time and space steps $\dt$ and $\dx$ will be given in each case. 
Section \ref{SecScheme_fin} excepted, the equilibrium $M$ we consider is constant on $[-v_{\max},v_{\max}]$, such that $\lla M\rra_{N_v}=1$.

To compare the results given by \eqref{Schemenl} for large values of $\ep$, we will compute numerical solutions of \eqref{KinEq2bis} using an explicit scheme for \eqref{KinEq2bis}. Denoting $f^\ep(t_n,x_i,v_j)\sim f^n_{i,j}$ and $\rho^\ep(t_n,x_i)\sim\rho^n_i$, such  a scheme writes 
\begin{equation}
 \label{Explicitnl}
 \left\{
 \begin{array}{l}
  \displaystyle \frac{f^{n+1}_{i,j}-f^n_{i,j}}{\dt}+[v\partial_x f]^n_{i,j}=\frac{1}{\ep}\left(\rho^n_i M_j -f^n_{i,j} +r\rho^n_i\left(  M_j-f^ n_{i,j}\right) \right)\vspace{4pt} \\
  \displaystyle \rho^n_i=\lla f^n_{i,j}\rra_{N_v},
 \end{array}
 \right.
\end{equation}
where the gradient $[v\cdot\nabla_x f]^n_{i,j}$ is computed using an upwind scheme 
\[
 [v\partial_x f]^n_{i,j}=v_j^+\frac{f^n_{i,j}-f^n_{i-1,j}}{\dx}+v_j^-\frac{f^n_{i+1,j}-f^n_{i,j}}{\dx}.
\]
The values $\phi^n_i$ and $\eta^n_{i,j}$ are then deduced from the values of $f^n_{i,j}, \rho^n_i$
\begin{equation}
 \label{transfonl}
 \phi^n_i=-\ep\ln(\rho^n_i), \;\; \eta^n_{i,j}=-\ep\ln\left( \frac{f^n_{i,j}}{\rho^n_iM_j} \right).
\end{equation}
For large values of $\ep$, the scheme \eqref{Explicitnl} is accurate, since it is an order $1$ upwind scheme for an equation which contains no stiff terms.
However, the computation of $\phi$ and $\eta$ for small values of $\ep$ with this method requires the explicit resolution of a stiff equation, which is costly from a computational point of view. 
Indeed, as a layer of width $\ep$ appears in the asymptotic regime, a condition  linking $\dx$ and $\ep$ must be satisfied to ensure the solution of \eqref{Explicitnl} to be accurate. Roughly, it writes $ \dx\le c\ep$.  In addition, the CFL condition 
\begin{equation}
 \label{CFLexp}
 \frac{\dt}{\dx}\le C,
\end{equation}
makes the numerical resolution of \eqref{Explicitnl} costly when $\ep$ is small.
The solution of \eqref{Schemenl} will be compared to the numerical solution of the limit equation \eqref{HJr0}-\eqref{HJnl} computed with \eqref{LimSchemer0}-\eqref{LimSchemenl}. In this limit scheme, the numerical resolution of the nonlinear equation needed to find $H^{n+1}$ is performed with the Newton method proposed in \cite{LuoPayneEikonal}.
In this method,  
the Newton method is used to solve \eqref{LimDefHnl}. It is however slightly modified to ensure $\phi^{n+1}_i$, defined in \eqref{LimSchemer0}-\eqref{LimSchemenl}, and the denominator of \eqref{LimDefHnl}, to remain nonnegative along the iterations.
When considering the micro-macro scheme \eqref{Schemenl} for small $\ep>0$, this constraint is automatically satisfied, thanks to the maximum principle.

\subsection{Consistency and AP character of the scheme in the linear case $r=0$}

In this section, we test the consistency and the AP property of the scheme \eqref{Schemenl} when $r=0$, and we consider spatially periodic boundary conditions. 
Firstly, we consider
the initial value 
\begin{equation}
\label{Phi_reg}
 \phi_{\tin}(x)=x^2, \;\; x\in[-1,1].
\end{equation}
To check  the consistency of the scheme \eqref{Schemenl}, we  test it for large values of $\ep$, for which the solution of \eqref{KinEqpsi} is far from the solution of the limit equation \eqref{HJr0}-\eqref{HJnl}. 
For $x\in[-1,1]$, we compute the solutions given by \eqref{Schemenl} for $\ep=1, 10^{-1}$, and $10^{-2}$,
and the parameters $\dt=2.5\cdot 10^{-3}$, and $\dx = 10^{-2}$. For $\ep=1$ and $\ep=10^{-1}$, the equation \eqref{KinEq2bis} can be solved numerically with \eqref{Explicitnl} and the same numerical parameters. The solutions given by \eqref{Schemenl}  are displayed in Fig.\;\ref{Phi_reg_ep_1} and Fig.\;\ref{Phi_reg_ep_10e-1}, they match with the solutions given by \eqref{Explicitnl} which are represented on the same figures.

\begin{figure}[!ht]
 \centering
\includegraphics[width=17cm]{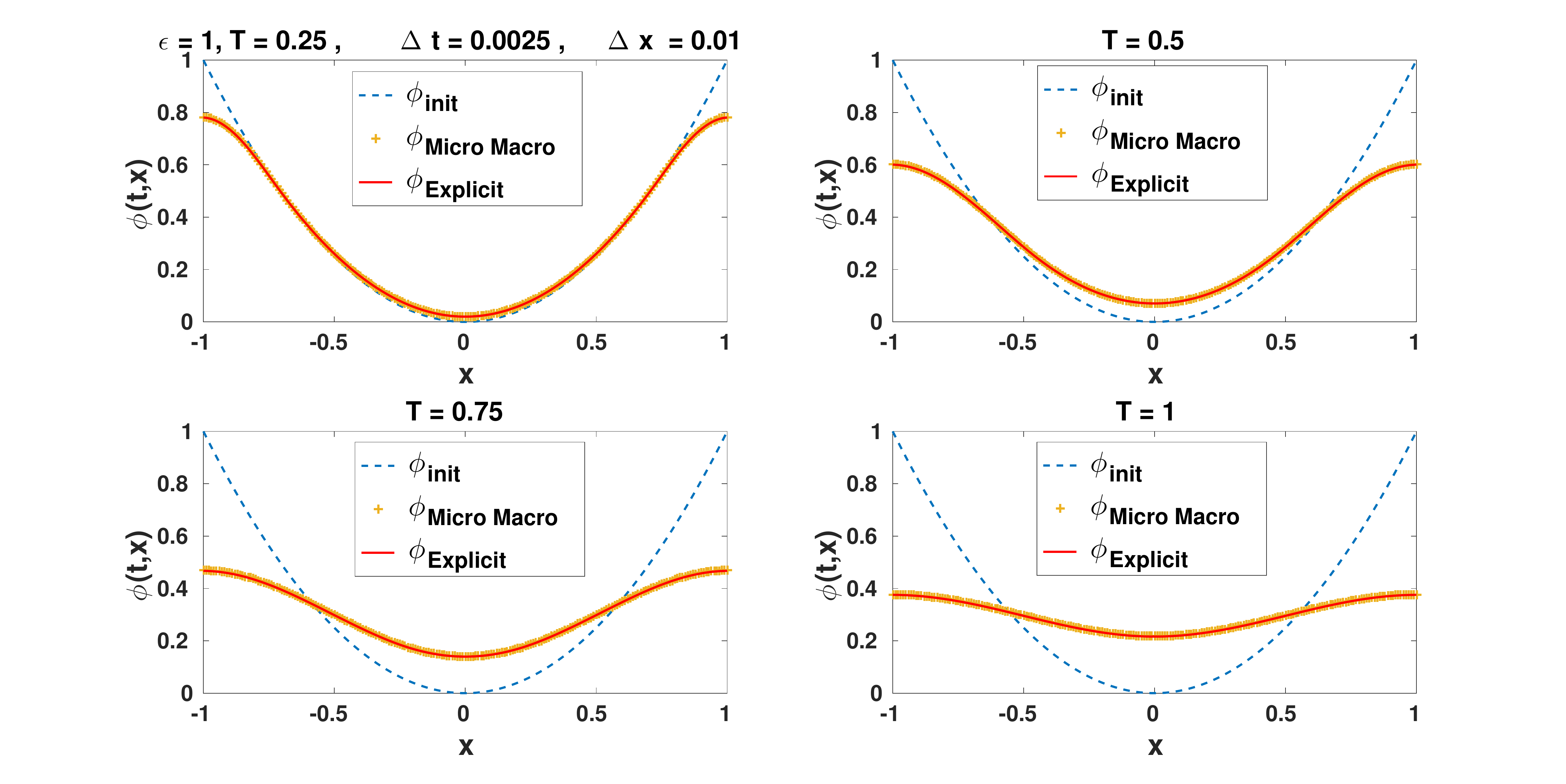}
\caption{The solutions of \eqref{Schemenl} and \eqref{Explicitnl} for $\ep=1$ at  times $T=0.25,\;0.5,\;0.75,\;1$, computed with $\dt=2.5\cdot 10^{-3}, \dx=10^{-2}$ and $\dv=1.25\cdot 10^{-2}$.}
\label{Phi_reg_ep_1}
\end{figure}

\begin{figure}[!ht]
 \centering
\includegraphics[width=17cm]{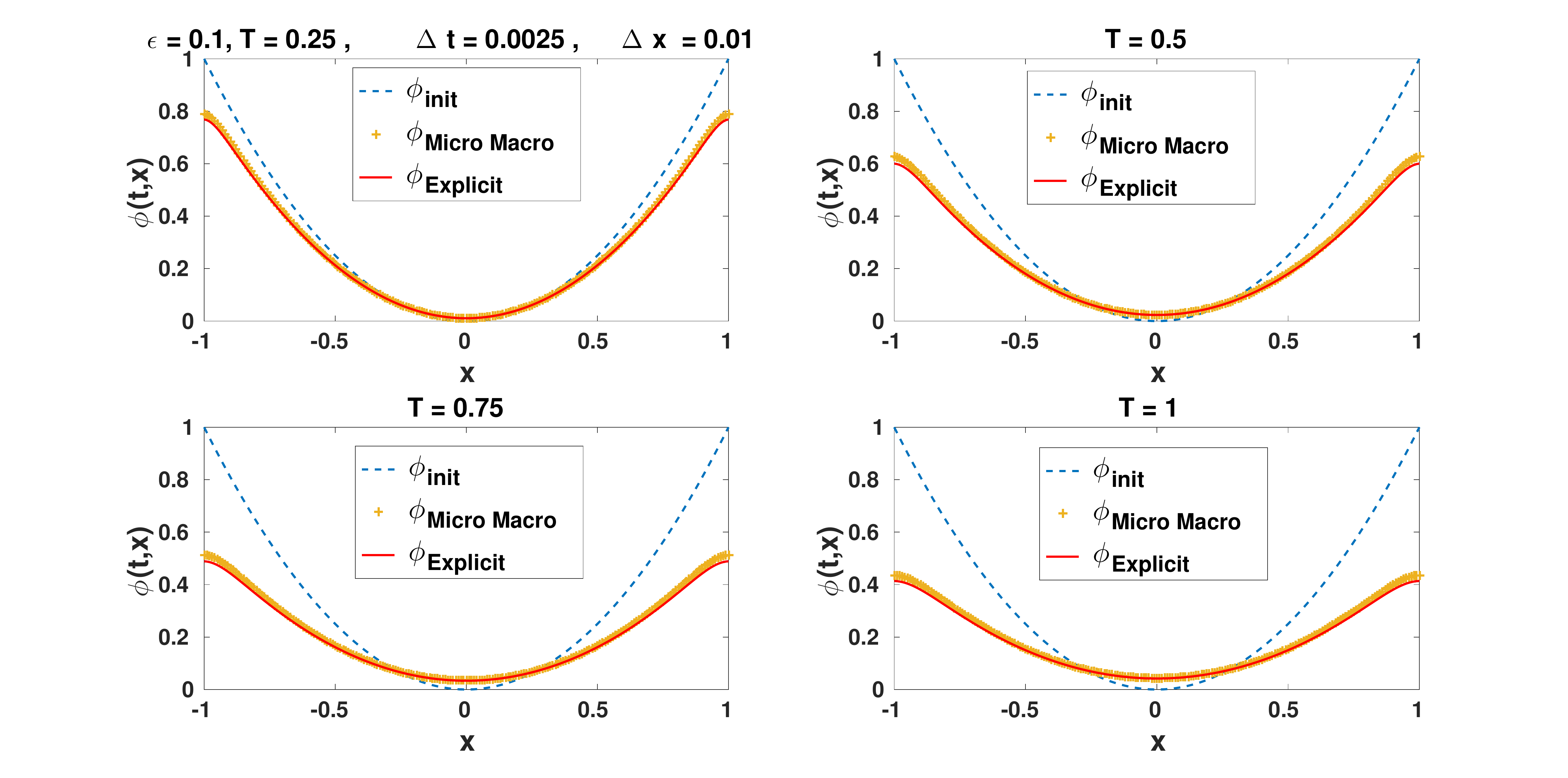}
\caption{The solutions of \eqref{Schemenl} and \eqref{Explicitnl} for $\ep=10^{-1}$ at  times $T=0.25,\;0.5,\;0.75,\;1$, computed with $\dt=2.5\cdot 10^{-3}, \dx=10^{-2}$ and $\dv=1.25\cdot 10^{-2}$.}
\label{Phi_reg_ep_10e-1}
\end{figure}

When $\ep$ becomes smaller, the computation of the solution of \eqref{KinEq2bis} with the scheme \eqref{Explicitnl} becomes  costly, because of the stiffness of the problem, and of the condition \eqref{CFLexp}.  
The solution of \eqref{Schemenl} for $\ep=10^{-2}$ is then compared to the solution of the limit scheme \eqref{HJr0} in Fig. \ref{Phi_reg_ep_10e-2}. Once again, both are computed with $\dt=2.5\cdot 10^{-3}$ and $\dx=10^{-2}$. We observe that for such a small value of $\ep$, the solution of the kinetic problem \eqref{KinEqpsi}, computed with \eqref{Schemenl} is close to the solution of the asymptotic problem \eqref{HJr0}. These tests, done independently for $\ep=1, 10^{-1}$ and for $\ep=10^{-2}$, show that the scheme \eqref{Schemenl} is accurate for the values of $\ep$ for wich the problem is not stiff, and that it catches the viscosity solution of the limit problem \eqref{HJr0} when $\ep$ is smaller. 
\begin{figure}[!ht]
 \centering
\includegraphics[width=17cm]{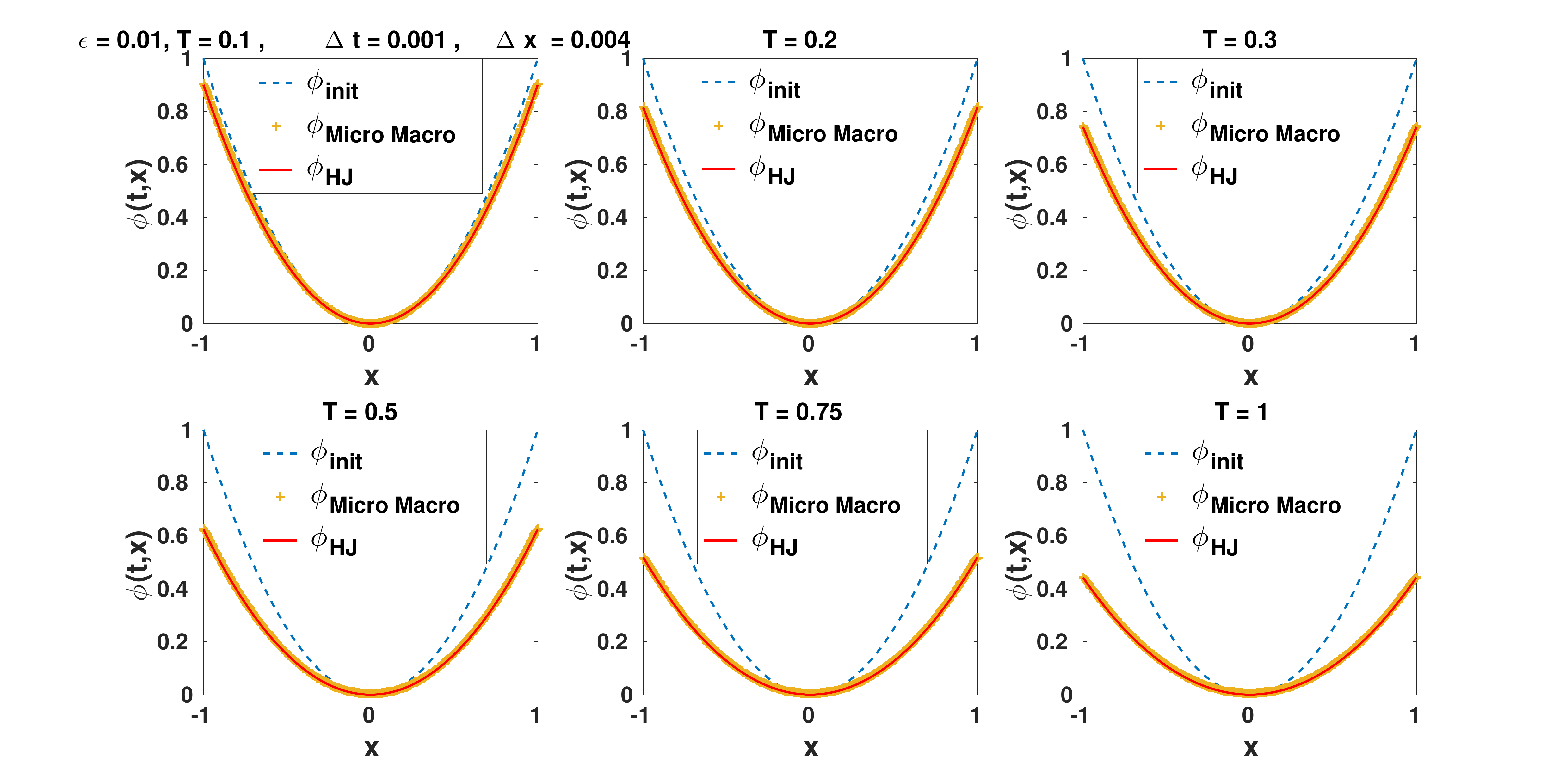}
\caption{The solutions of \eqref{Schemenl} and \eqref{LimSchemer0} for $\ep=10^{-2}$ at  times $T=0.1,\;0.2,\;0.3,\;0.5,\;0.75,\;1$, computed with $\dt=2.5\cdot 10^{-3}, \dx=10^{-2}$ and $\dv=1.25\cdot 10^{-2}$.}
\label{Phi_reg_ep_10e-2}
\end{figure}

In both cases $r=0$ and $r>0$, the solution of \eqref{KinEqpsi} converges, when $\ep$ goes to $0$, to the viscosity solution of an Hamilton-Jacobi equation \eqref{HJr0}-\eqref{HJnl}. Depending on the properties of the initial data, the solutions of these equations may not be smooth. Indeed, if there are two distinct minimum points in the initial data, 
the solutions of \eqref{HJr0}-\eqref{HJnl} lack the $\mathcal{C}^1$ regularity after some time.
Theorem \ref{CrandallLionsThmnl} ensures that the limit schemes \eqref{LimSchemer0}-\eqref{LimSchemenl} catch the viscosity solution of \eqref{HJr0}-\eqref{HJnl}. As a consequence, they are robust when 
such a lack of regularity
appears during the computations. 
However, to ensure that a scheme for \eqref{KinEqpsi} enjoys the AP property, it is necessary to make sure that it is robust in the case of an initial data leading to 
a solution which does not enjoy the $\mathcal{C}^1$ regularity,
for all the values of $\ep$.  
In order to test the robustness of \eqref{Schemenl} in this case, we now consider an initial data with two minimum points. Once again, for large values of $\ep$, the solution of \eqref{Schemenl} is compared to the solution of \eqref{Explicitnl}, to check 
the agreement between them.
The values  $\ep=1$ and $\ep=10^{-1}$ are considered in Fig. \ref{Phi_ep_1} and Fig. \ref{Phi_ep_10e-1}, with the parameters $\dt=10^{-3}$ and $\dx=4\cdot 10^{-3}$. 
\begin{figure}[!ht]
 \centering
\includegraphics[width=17cm]{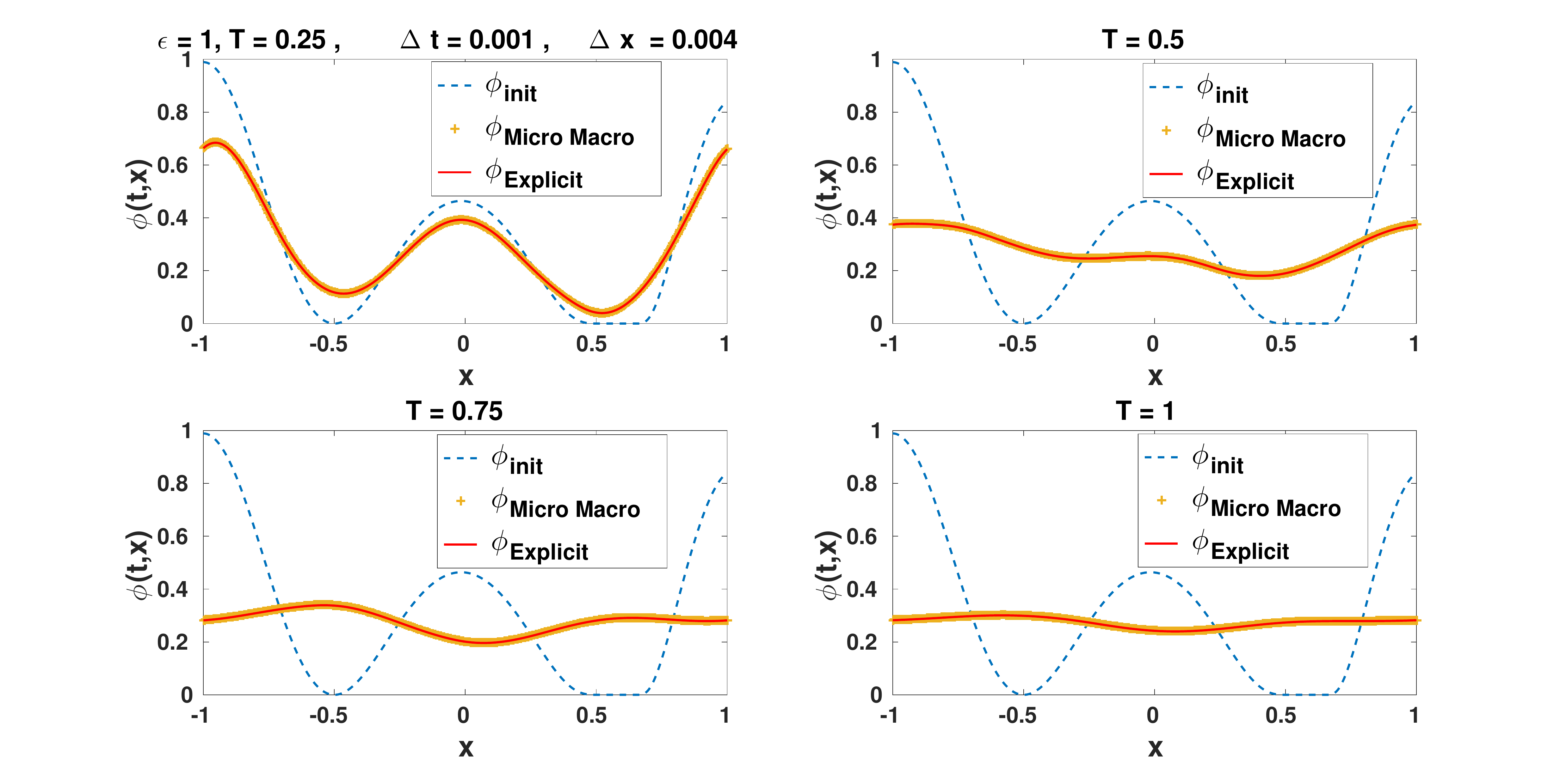}
\caption{The solutions of \eqref{Schemenl} and \eqref{Explicitnl} for $\ep=1$ at  times $T=0.25,\;0.5,\;0.75,\;1$, computed with $\dt=10^{-3}, \dx=4\cdot 10^{-3}$ and $\dv=1.25\cdot 10^{-2}$.}
\label{Phi_ep_1}
\end{figure}

\begin{figure}[!ht]
 \centering
\includegraphics[width=17cm]{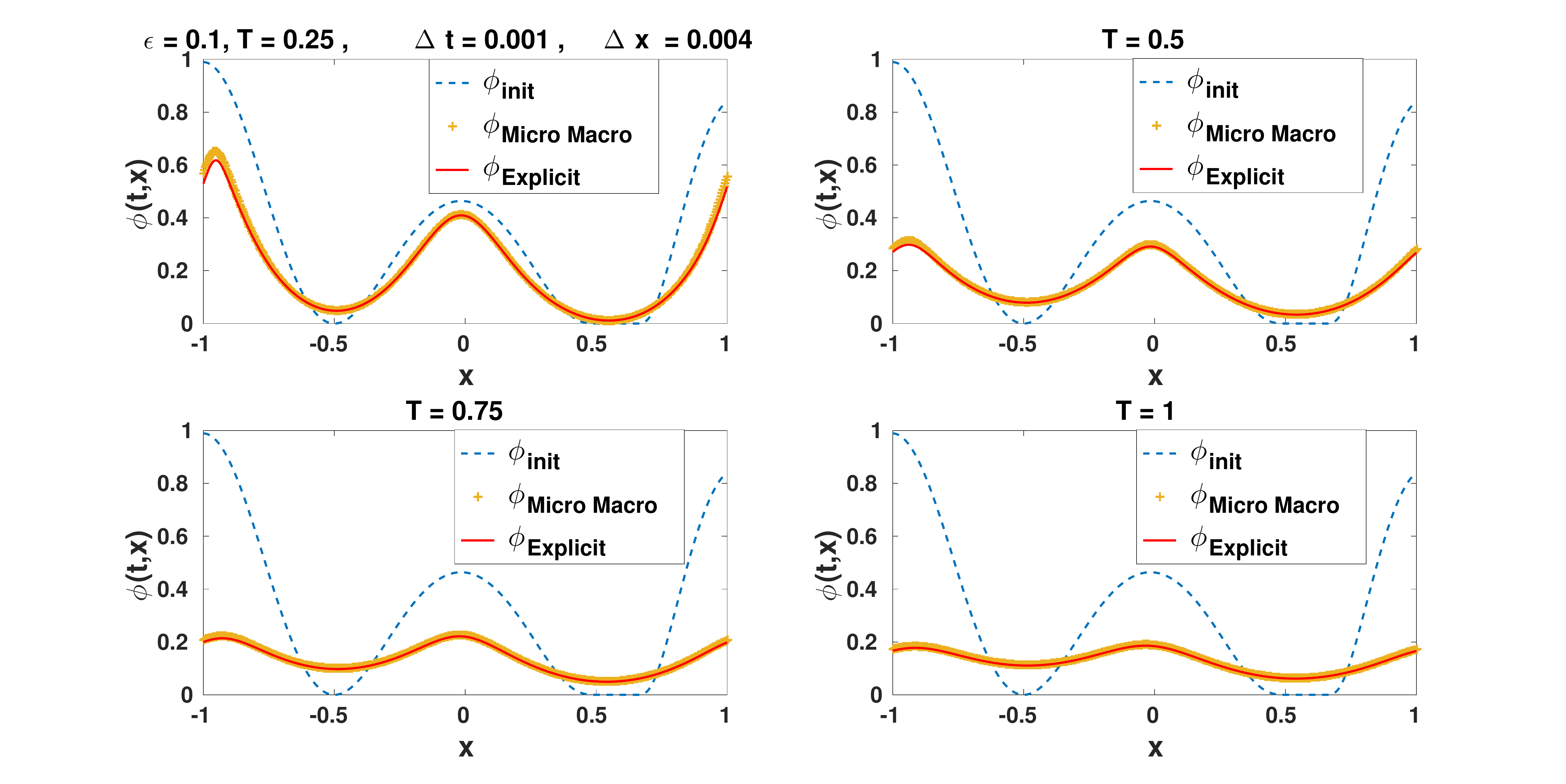}
\caption{The solutions of \eqref{Schemenl}-\eqref{DefetaHnl} and \eqref{Explicitnl} for $\ep=10^{-1}$ at  times $T=0.25,\;0.5,\;0.75,\;1$, computed with $\dt=10^{-3}, \dx=4\cdot 10^{-3}$ and $\dv=1.25\cdot 10^{-2}$.}
\label{Phi_ep_10e-1}
\end{figure}

When $\ep$ is smaller, the solution of the scheme \eqref{Schemenl} is compared to the solution of \eqref{HJr0}-\eqref{HJnl}. 
Still because of the stiffness of the problem \eqref{KinEq2bis} for small $\ep$, when $\ep$ is smaller it is costly to compare the solution of the micro-macro scheme \eqref{Schemenl} to the solution of the explicit scheme \eqref{Explicitnl}. In addition, it is not robust when the solution 
is not regular enough,
as in the test case we are considering. Instead, we compare the results given by the micro-macro scheme \eqref{Schemenl} to the solutions given by the limit scheme \eqref{LimSchemer0}, which catches the viscosity solution of the limit problem \eqref{HJr0}. The computations are done with the parameters $\dt=10^{-3}$, and $\dx=4\cdot 10^{-3}$, and 
the results are displayed in Fig. \ref{Phi_ep_10e-2} for $\ep=10^{-2}$ and Fig. \ref{Phi_ep_10e-3} for $\ep=10^{-3}$.
\begin{figure}[!ht]
 \centering
\includegraphics[width=17cm]{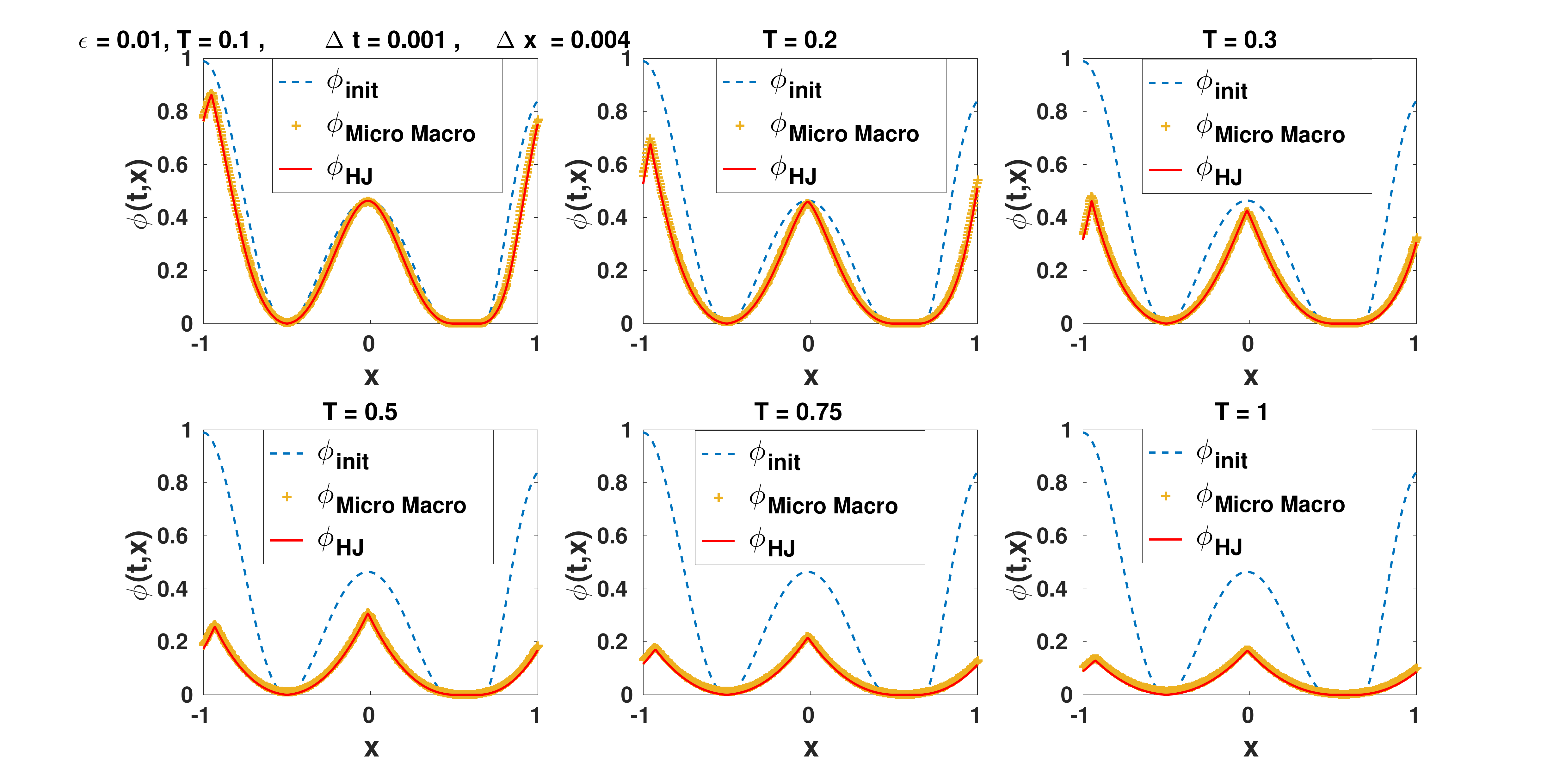}
\caption{The solutions of \eqref{Schemenl} and \eqref{LimSchemer0} for $\ep=10^{-2}$ at  times $T=0.1,\;0.2,\;0.3,\;0.5,\;0.75,\;1$, computed with $\dt=10^{-3}, \dx=4\cdot 10^{-3}$ and $\dv=1.25\cdot 10^{-2}$.}
\label{Phi_ep_10e-2}
\end{figure}

\begin{figure}[!ht]
 \centering
\includegraphics[width=17cm]{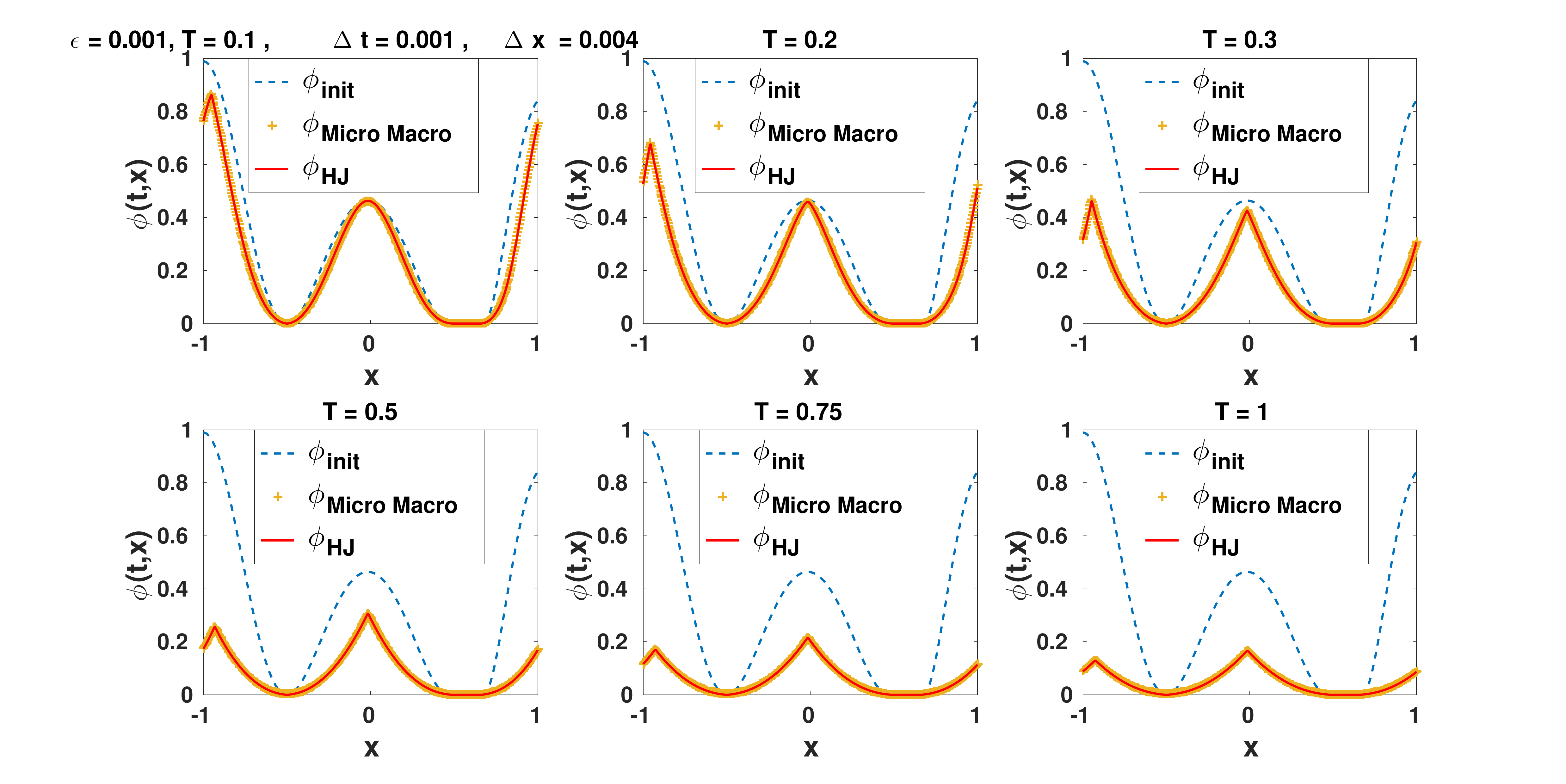}
\caption{The solutions of \eqref{Schemenl} and \eqref{LimSchemer0} for $\ep=10^{-3}$ at  times $T=0.1,\;0.2,\;0.3,\;0.5,\;0.75,\;1$, computed with $\dt=10^{-3}, \dx=4\cdot 10^{-3}$ and $\dv=1.25\cdot 10^{-2}$.}
\label{Phi_ep_10e-3}
\end{figure}

\newpage
\subsection{Consistency and AP character in the non-linear case $r>0$}

In this section, we study the consistency and the AP property of the micro-macro scheme \eqref{Schemenl} in the non-linear case $r>0$. As in the linear case, the accuracy of the micro-macro scheme \eqref{Schemenl} is tested for different values of $\ep$. The results are displayed in Fig. \ref{test_consistanceAP}, where the phase $\phi$ given by the scheme is compared to the solution of an explicit scheme \eqref{Explicitnl} for \eqref{KinEq2} when $\ep=1$ or $\ep=10^{-1}$, and to the solution of the limit scheme \eqref{LimSchemenl} for $\ep=10^{-2}$ or $\ep=10^{-3}$. All the results are presented at time $T=0.5$, and have been computed with $\dx=10^{-2}$, and $\dt=2.5\cdot 10^{-3}$, and spatial periodic boundary conditions. As in the linear case, the comparison with the explicit scheme test shows that the scheme is accurate for large values of $\ep$, and the comparison with the limit scheme highlights its AP character.
\begin{figure}[!ht]
 \centering
\includegraphics[width=17cm]{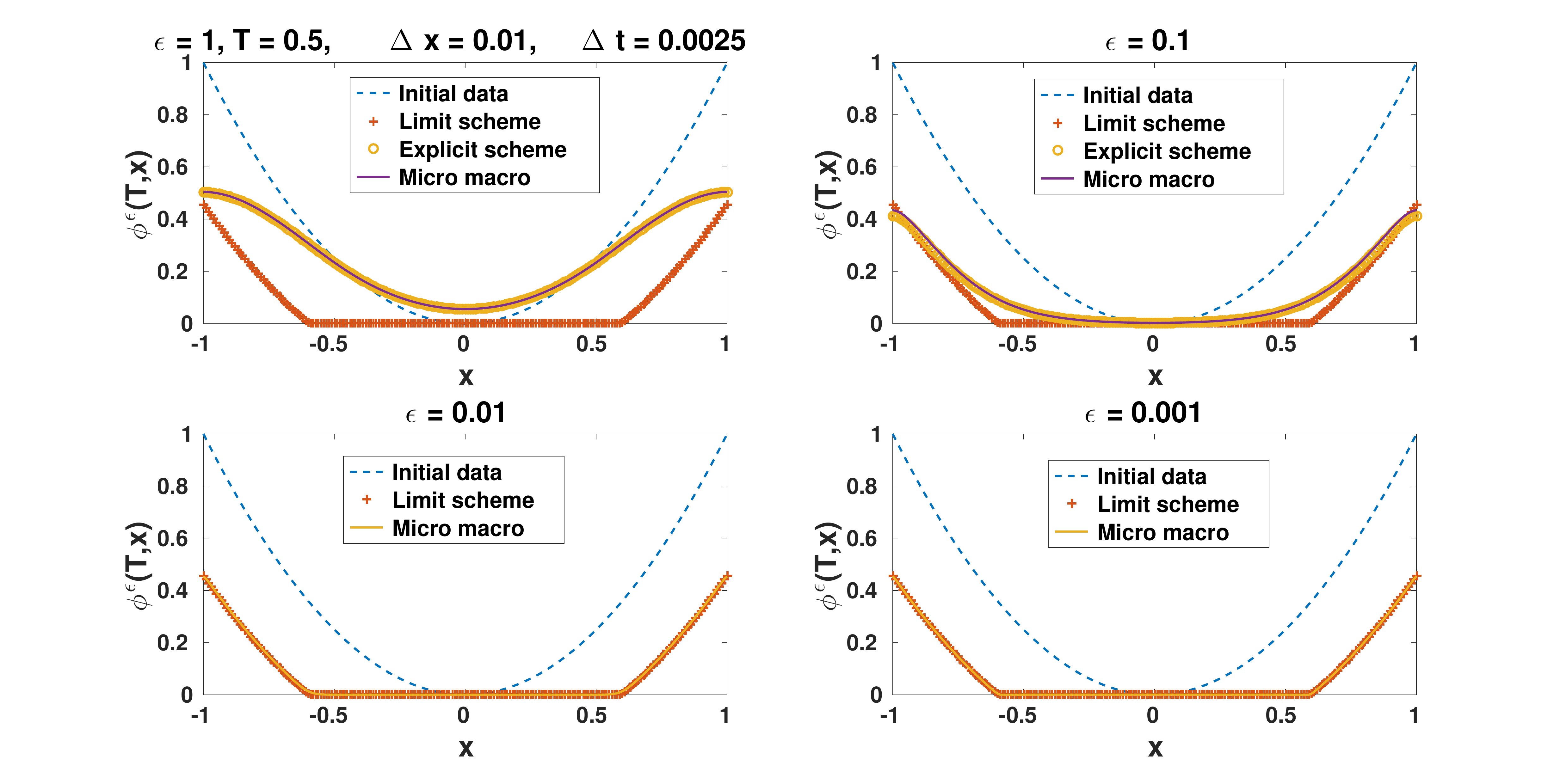}
\caption{The solutions of \eqref{Schemenl}, \eqref{Explicitnl}, and \eqref{LimSchemer0} for some values of $\ep$ at  time $T=0.5$, computed with $\dt=2.5\cdot10^{-3}, \dx=\cdot 10^{-2}$ and $\dv=1.25\cdot 10^{-2}$.}
\label{test_consistanceAP}
\end{figure}

When $r>0$, 
it is expected that the nullset of $\phi$ spreads at a fixed speed  $c^*$, which depends on the model. 
Indeed, the following result has been proved in \cite{Bouin},
\begin{prop}
 Assume that 
 \[
  \phi_0(x)=\left\{  
  \begin{array}{l l}
   0 & x=0 \\ +\infty & \text{\;else\;}
  \end{array},
  \right.
 \]
and define
\[
 c^*=\inf\limits_{p>0}\left( \frac{H(p)+r}{p}\right).
\]
Then the nullset of the solution $\phi$ of \eqref{HJnl} propagates at speed $c^*$:
\[
 \forall t>0,  \;\; \left\{\phi(t,\cdot )=0\right\}=B(0,c^* t).
\]
\end{prop}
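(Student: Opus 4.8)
The plan is to reduce the statement to the Hopf--Lax formula for the unconstrained equation, combined with two one-sided comparison arguments for the obstacle problem \eqref{HJnl}. First I would introduce $L=H^{*}$, the Legendre transform of the Hamiltonian defined implicitly by \eqref{Hnl}; using $\lla M\rra=1$, $\lla vM\rra=0$ and Jensen's inequality one checks that $H$, hence $L$, is convex, even, nonnegative and vanishes at $0$, and that $v_{\max}|p|-1-r\le H(p)\le v_{\max}|p|$, so that $c^{*}\in(0,v_{\max}]$ and $L$ is finite on $[-v_{\max},v_{\max}]$. The key elementary identity, which comes straight from $L(q)=\sup_{p}(pq-H(p))$ and the evenness of $H$, is
\[
 L(q)\le r \iff |q|\le c^{*}=\inf_{p>0}\frac{H(p)+r}{p}.
\]
In particular $c^{*}$ is exactly the speed at which the ``free'' profile $q\mapsto L(q)-r$ changes sign.

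For the inclusion $\{\phi(t,\cdot)=0\}\subset\overline{B(0,c^{*}t)}$ I would bound $\phi$ from below. Let $u(t,x)=tL(x/t)-rt$ be the Hopf--Lax solution of the unconstrained problem $\partial_{t}u+H(\partial_{x}u)+r=0$ with datum $\phi_{0}$; its initial trace is exactly $\phi_{0}$ (it equals $+\infty$ for $x\neq0$ once $t<|x|/v_{\max}$, and $-rt\to0$ at $x=0$). Both $u$ and the constant $0$ are viscosity subsolutions of \eqref{HJnl}: for $u$ because it solves $\partial_{t}u+H(\partial_{x}u)+r=0$, so the $\min$ vanishes; for $0$ because $\min\{0,\cdot\}\le0$ always. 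Hence $\max(0,u)$ is again a subsolution of \eqref{HJnl} with initial trace $\phi_{0}$, and the comparison principle for \eqref{HJnl} gives $\phi\ge\max(0,u)$. Consequently $|x|>c^{*}t$ forces $L(x/t)>r$ and $\phi(t,x)\ge u(t,x)=t(L(x/t)-r)>0$.

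For the reverse inclusion $\overline{B(0,c^{*}t)}\subset\{\phi(t,\cdot)=0\}$, the maximum principle already gives $\phi\ge0$, so it suffices to get $\phi(t,x)\le0$ whenever $|x|\le c^{*}t$. Here I would use the optimal control representation of the obstacle problem \eqref{HJnl}, which can be read off the monotone limit scheme \eqref{LimSchemenl} --- whose update is $\phi^{n+1}_{i}=\max(0,\phi^{n}_{i}-\dt H^{n+1}_{i}-r\dt)$, i.e.\ ``propagate by the Hamilton--Jacobi step, then project onto $\{\cdot\ge0\}$'' --- or invoked from the theory of state-constrained Hamilton--Jacobi equations: along any Lipschitz curve $\gamma\colon[0,t]\to\R$ with $\gamma(0)\in\{\phi_{0}=0\}=\{0\}$ such that the running action $\sigma\mapsto\int_{0}^{\sigma}\big(L(\dot\gamma(s))-r\big)\,\dd s$ stays nonpositive for all $\sigma\le t$, one has $\phi(t,\gamma(t))=0$. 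Choosing the straight segment $\gamma(s)=\tfrac{s}{t}x$ gives $\dot\gamma\equiv x/t$ with $|x/t|\le c^{*}$, hence $L(\dot\gamma)-r\le0$ and running action $\sigma\,(L(x/t)-r)\le0$; thus $\phi(t,x)=0$. Combining the two inclusions gives $\{\phi(t,\cdot)=0\}=\overline{B(0,c^{*}t)}$ for every $t>0$.

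The step I expect to be the main obstacle is making the representation of the second inclusion fully rigorous for the \emph{constrained} equation with the singular datum $\phi_{0}\in\{0,+\infty\}$: one has to be sure both that the obstacle $\phi\ge0$ does not enlarge the null set beyond the ball (this is precisely what the lower comparison of the first step rules out) and that staying pinned at $0$ along $\gamma$ is genuinely admissible. I would secure this either by quoting the variational formula already established in \cite{Bouin}, or, to remain self-contained, by carrying out the pinning argument at the discrete level on \eqref{LimSchemenl} --- where the ``$\max$ with $0$'' structure and monotonicity make it transparent --- and then passing to the limit with Theorem~\ref{CrandallLionsThmnl}.
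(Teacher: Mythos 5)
The paper itself does not prove this proposition: it is imported verbatim from \cite{Bouin} (``the following result has been proved in \cite{Bouin}''), so there is no internal proof to compare your argument against. Judged on its own, your plan is the natural one and essentially sound. The duality identity $L(q)\le r \iff |q|\le c^{*}$ is correct (for $q\ge0$, $L(q)\le r$ means $pq\le H(p)+r$ for all $p$, which only bites for $p>0$), the bounds $\max\bigl(0,v_{\max}|p|-1-r\bigr)\le H(p)< v_{\max}|p|$ do follow from \eqref{Hnl} with \eqref{Mv}--\eqref{Mv2}, and the lower bound $\phi\ge\max(0,u)$ via the Hopf--Lax solution $u(t,x)=tL(x/t)-rt$ and comparison for the obstacle problem gives the inclusion $\{\phi(t,\cdot)=0\}\subset\overline{B(0,c^{*}t)}$, modulo the usual care needed to run comparison with the $\{0,+\infty\}$-valued datum (approximation of $\phi_\tin$, or working directly with the variational formula), which you flag yourself.

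The one point that genuinely needs repair is the path criterion you invoke for the reverse inclusion. For the variational inequality \eqref{HJnl} the Freidlin/Evans--Souganidis representation reads, roughly, $\phi(t,x)=\inf_{\gamma(t)=x}\max\bigl\{\phi_\tin(\gamma(0))+\int_0^t(L(\dot\gamma)-r)\,\dd s,\ \sup_{0<\sigma\le t}\int_\sigma^t(L(\dot\gamma)-r)\,\dd s\bigr\}$: the obstacle allows the cost to be reset to $0$ at any intermediate time $\sigma$, so the condition guaranteeing $\phi(t,\gamma(t))=0$ is that all \emph{tail} actions $\int_\sigma^t(L(\dot\gamma)-r)\,\dd s$ be nonpositive, not the head actions $\int_0^\sigma(L(\dot\gamma)-r)\,\dd s$ that you wrote; a path whose cost rate is very negative early and positive late satisfies your condition but not the conclusion. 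For the constant-speed straight segment the two conditions coincide, so your application and hence the final conclusion are unaffected, but as a general lemma your statement is false and must be replaced either by quoting the correct variational formula (available in \cite{Bouin}, where this proposition is proved) or, as you propose, by running the pinning argument at the discrete level on \eqref{LimSchemenl}, whose update $\phi^{n+1}_i=\max\bigl(0,\phi^n_i-\dt(H^{n+1}_i+r)\bigr)$ makes it transparent, and passing to the limit with Theorem \ref{CrandallLionsThmnl}. Finally, note that your two inclusions identify the nullset with the closed ball; the proposition's $B(0,c^{*}t)$ should be read accordingly.
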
 

The tests we propose in what follows are designed to highlight the propagation of fronts that is expected, thanks to the positivity of $r$. The speed of the propagation of the fronts is also tested at the numerical level. 
The initial condition we consider is a density $\rho$ such that $\rho=1$ at the left of the spatial domain and $0$ elsewhere, with Neumann spatial boundary conditions. To ensure the test is done in the asymptotic regime, we consider $\ep=10^{-4}$ with the parameters $\dt= 3.125\cdot 10^{-4} $, and $\dx = 1.25\cdot 10^{-3} $. Such a refined grid is not necessary to observe the propagation of the front, but it provides a better accuracy on the computation of the numerical propagation speed. The density given by the micro-macro scheme \eqref{Schemenl} at different times is displayed in Fig. \ref{nonlinear_propagation}. The speed of the propagation of the front is computed in the left part of Fig. \ref{nonlinear_propagationspeed}. To determine numerically its theoretical value, the minimum of 
\begin{equation}
\label{cp}
 c(p)=\frac{H(p)+r}{p},
\end{equation}
for $p>0$ is computed. It is presented in the right side of Fig. \ref{nonlinear_propagationspeed}. The precision of the result depends on $\dx$, as highlighted in Fig. \ref{PrecisionPropagationSpeed}. In this figure, the relative error $|c_{\dx}^*-c^*|/c^*$, where $c^*_{\dx}$ denotes the numerical propagation speed, is presented as a function of $\dx$. In a logarithmic scale, a line is obtained, with slope $2$.
\begin{figure}[!ht]
 \centering
\includegraphics[width=17cm]{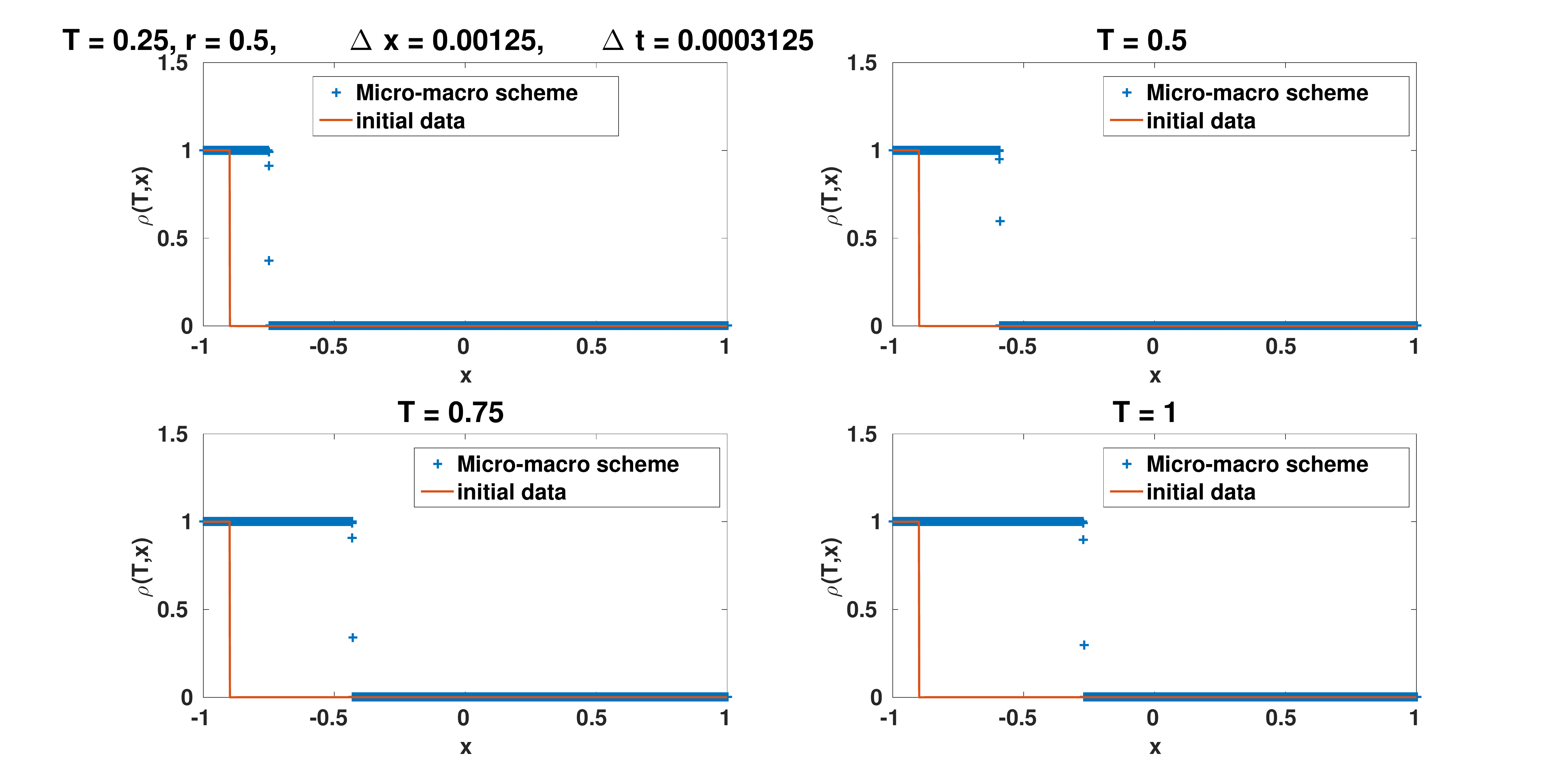}
\caption{The solution of the micro-macro scheme \eqref{Schemenl} for  $\ep=10^{-4}$ at  times $T=0.25,\; 0.5, \;0.75$ and $1$, computed with $\dt= 3.125\cdot 10^{-4}, \dx=1.25\cdot 10^{-3}$ and $\dv=1.25\cdot 10^{-2}$.}
\label{nonlinear_propagation}
\end{figure}

\begin{figure}[!htbp]
\begin{center}
\begin{tabular}{@{}c@{}c@{}}
\includegraphics[width=8cm]{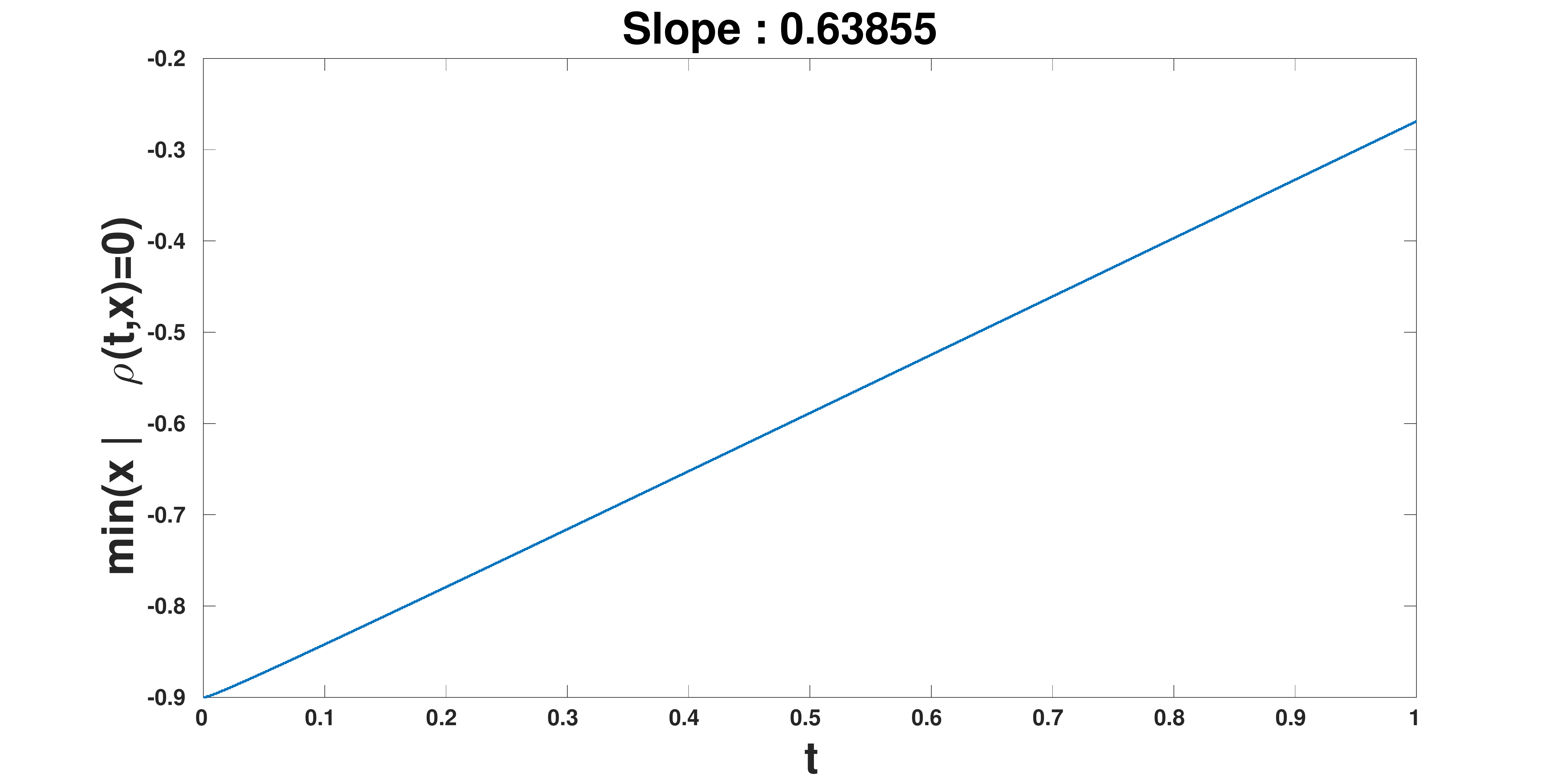}    &
\includegraphics[width=8cm]{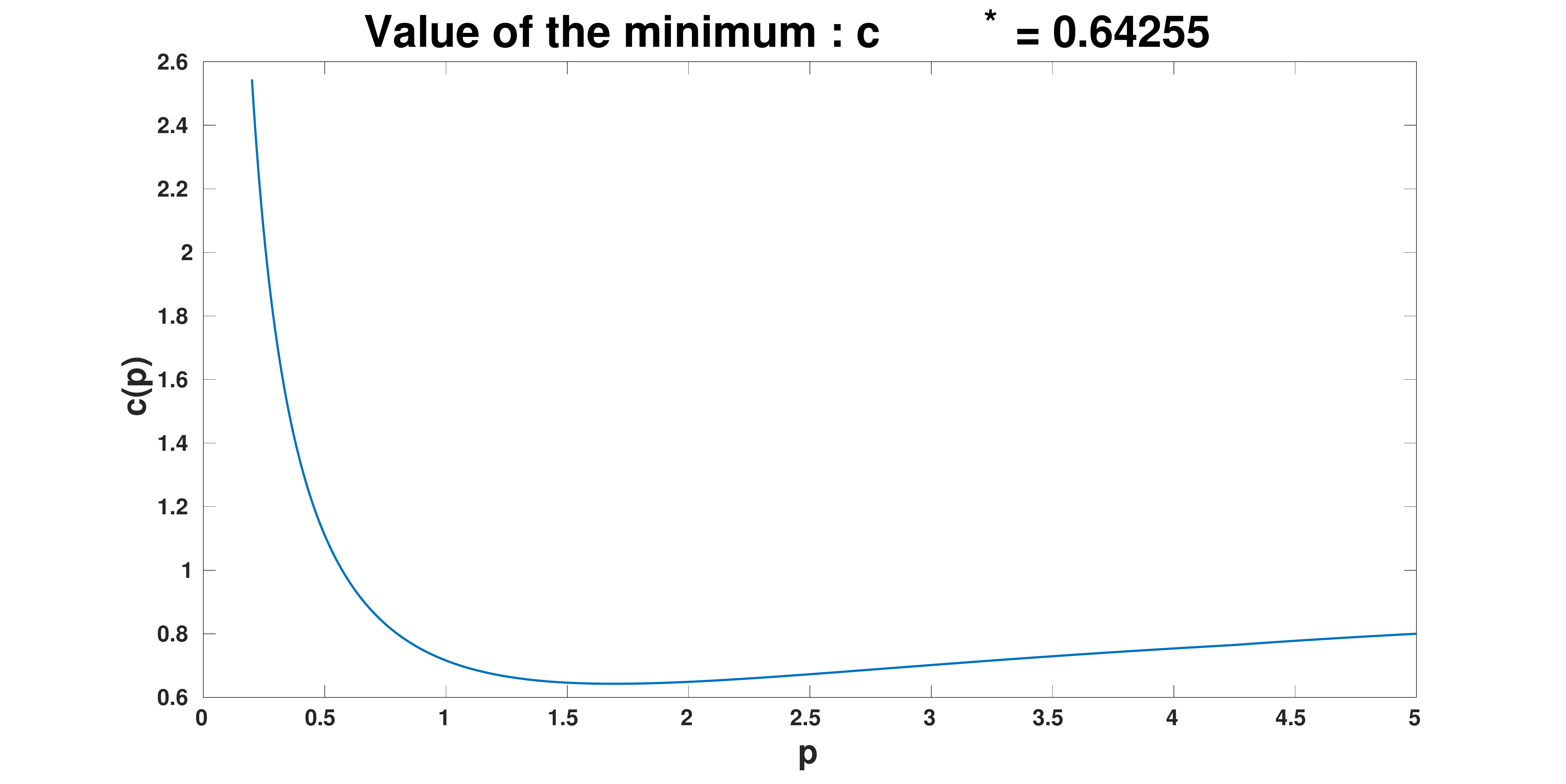}
\end{tabular}
\caption{Left: The place of the front $\min\limits_x\{\rho(t,x)=0\}$ as a function of $t$, computed with $\dt= 3.125\cdot 10^{-4}, \dx=1.25\cdot 10^{-3}$ and $\dv=1.25\cdot 10^{-2}$. The slope of the line gives the numerical propagation speed of the front. Right : The quantity $c(p)$ \eqref{cp} as a function of $p$. The theoretical front propagation speed is the minimal value of $c(p)$.}
\label{nonlinear_propagationspeed}
\end{center}
\end{figure}

\begin{figure}[!htbp]
 \begin{center}
  \includegraphics[width=13cm]{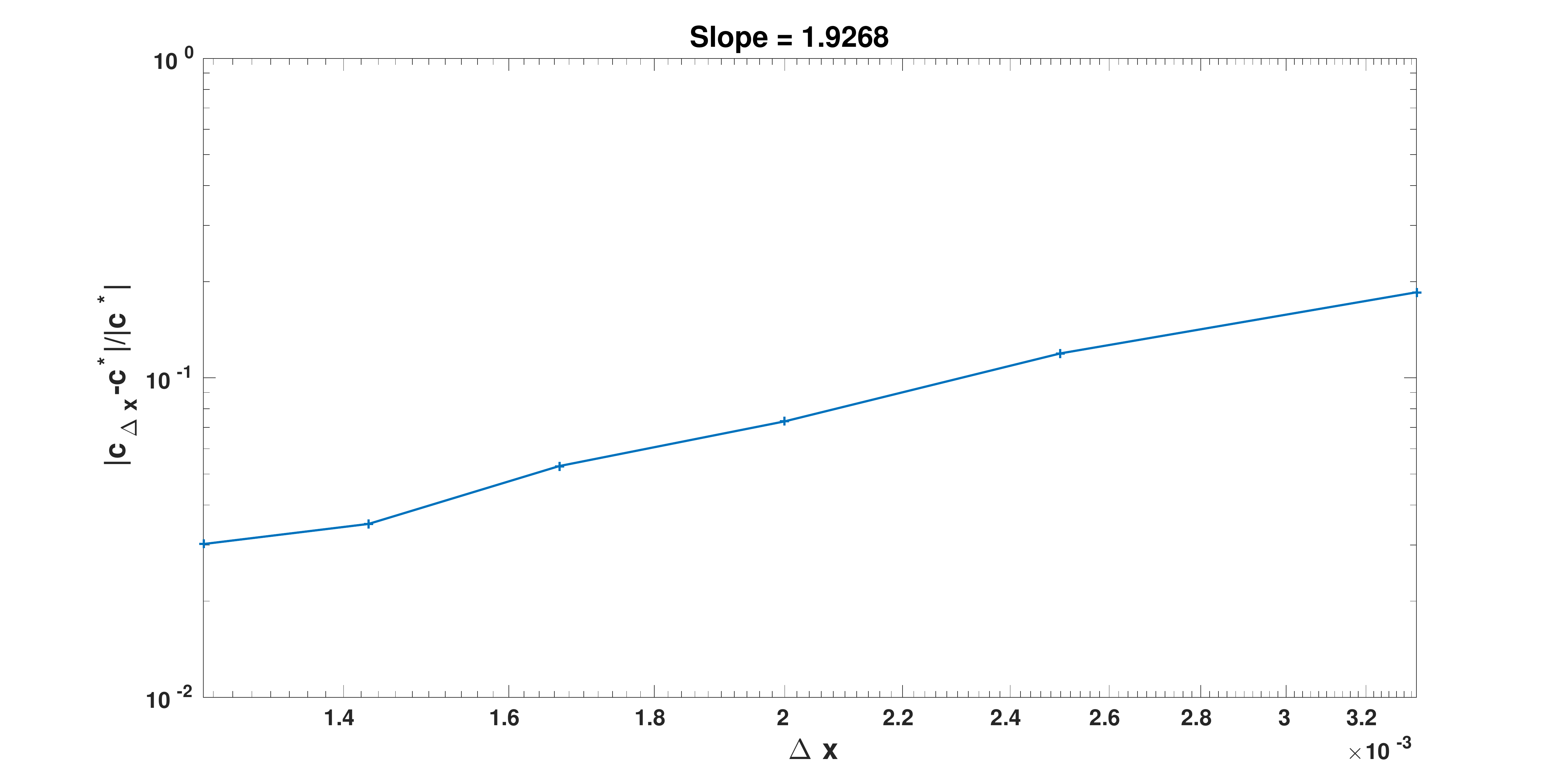}
  \caption{The relative error $|c_\dx^*-c^*|/c^*$ as a function of $\dx$ (log scale).}
  \label{PrecisionPropagationSpeed}
 \end{center}
\end{figure}

\newpage
\subsection{Order and uniform accuracy}
\label{Sec_Scheme_UA}

In this section, we study the order of the scheme when $\ep$ is fixed, and we investigate its uniform accuracy.
To study the order of the scheme, 
we choose the initial data \eqref{Phi_reg}, and
we define a reference solution as the solution given by \eqref{Schemenl}-\eqref{DefetaHnl} for $\dx=2\cdot10^{-3}$, $\dt=5\cdot 10^{-4}$, and $\dv=1.25\cdot 10^{-2}$. 
Keeping $\dt$ and $\dv$ fixed, 
we compute the solutions of \eqref{Schemenl} for different values of $N_x$. 
The error
\[
 E(\ep,\dx)=\frac{\|\phi^\ep_{ref}-\phi^\ep_{\dx}\|_\infty}{\|\phi^\ep_{ref}\|_\infty},
\]
is then computed for $\ep=1$.
It is displayed in Fig. \ref{Order} in logarithmic scale. As the slope of the line is slightly greater than $1$, the scheme is of order $1$ in space, as expected for an upwind scheme. 

\begin{figure}[!ht]
 \centering
\includegraphics[width=17cm]{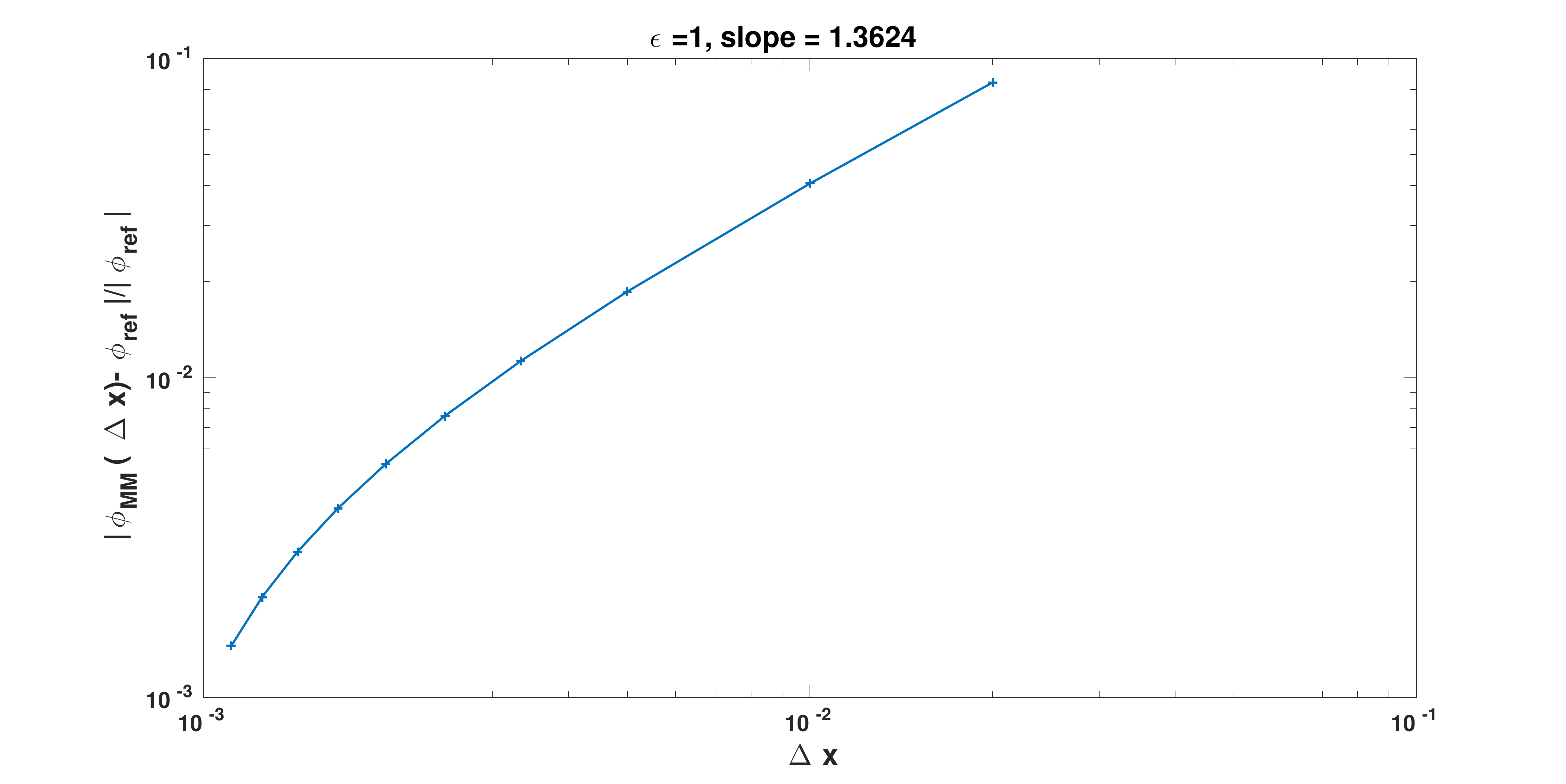}
\caption{The expression $E(1,\dx)$ in function of $\dx$ (log scale).}
\label{Order}
\end{figure}

The scheme enjoys the UA property, with uniform order $1$  if 
\[
 \sup\limits_{\ep\in(0,1]} E(\ep,\dx)\le C \dx,
\]
with $C$ independent of $\ep$. This property is
is highlighed in Fig. \ref{UA} where the error $E(\ep,\dx)$ is displayed in function of $\ep$ for different values of $\dx$. As the lines are stratified, the scheme \eqref{Schemenl}-\eqref{DefetaHnl} is uniformly accurate in $\ep$.

\begin{figure}[!ht]
 \centering
\includegraphics[width=17cm]{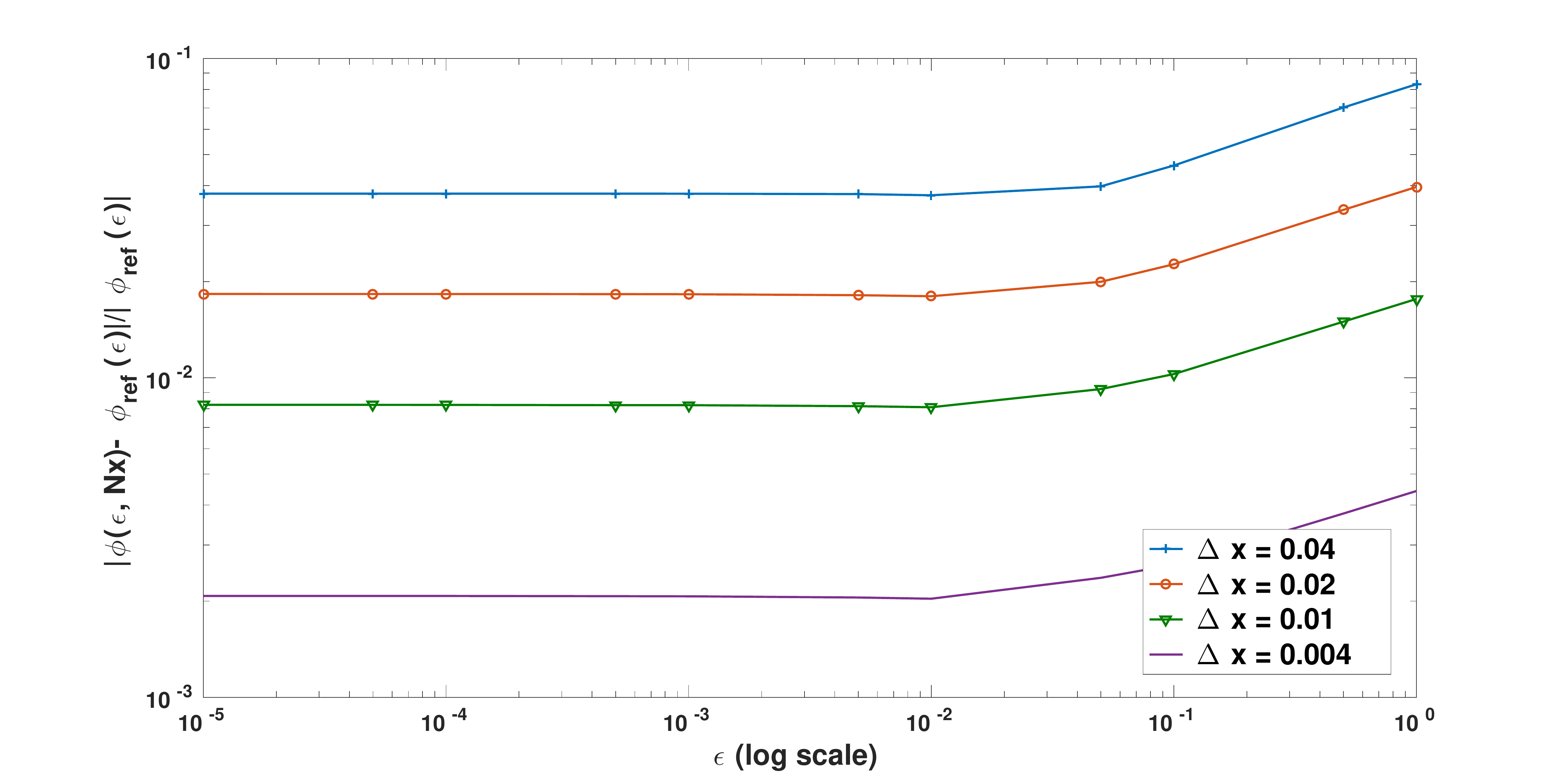}
\caption{The expression $E(\ep,\dx)$ in function of $\ep$ for different values of $\dx$ (log scale).}
\label{UA}
\end{figure}

\subsection{Case of a singular equilibrium}
\label{SecScheme_fin}

In this section, we consider the linear case $r=0$, and an equilibrium $M$ vanishing at some point of the velocity domain. It has been proved in 
\cite{Caillerie}
 that, in this case, the asymptotic model is still a Hamilton-Jacobi equation, but that the Hamiltonian $H$ is not given by the implicit formula \eqref{Hnl} anymore.
Indeed, if we suppose that the velocity space $V$, which is the support of $M$,
writes $V=[-v_{\max},v_{\max}]$
(such an hypothesis simplifies the notations but is not necessary), and if we denote
\begin{equation}
 \label{mup}
 \mu(p)=
 v_{\max} |p|,
\end{equation}
and 
\begin{equation}
 \label{SingM}
 \mathrm{Sing}(M)=\left\{ p\in\R ,\;\lla \frac{M(v)}{\mu(p)-v p}\rra\le1\right\},
\end{equation}
the following result holds
\begin{thm}
 \label{thmCaillerie}
Suppose that $\phi^\ep(0,x,v)=\phi_\tin(x)$, then $\phi^\ep$ converges locally uniformly on $\R_+\times\R\times V$ towards $\phi$, where $\phi$ does not depend on $v$. Moreover, $\phi$ is the viscosity solution of the following Hamilton-Jacobi equation:
\begin{equation}
\label{FinLimEq}
 \partial_t \phi(t,x)+H\left(\partial_x\phi(t,x)\right)=0, \;\; (t,x)\in\R_+\times\R,
\end{equation}
where the Hamiltonian $H$ is defined as follows: if $p\in \mathrm{Sing}(M)$, then $H(p)=\mu(p)-1$, else $H(p)$ is uniquely determined by the following implicit formula 
\begin{equation}
 \label{DefH}
 \lla \frac{M}{1+H(p)-v p}\rra =1.
\end{equation}
\end{thm}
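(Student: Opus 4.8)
The plan is to combine a priori bounds with the method of half-relaxed limits, identifying the Hamiltonian by a perturbed test function argument that must be split according to whether $M$ degenerates in the relevant direction. Throughout, $\phi^\ep$ denotes the Hopf-Cole transform $f^\ep=M\e^{-\phi^\ep/\ep}$ (it depends on $v$ a priori) and solves $\partial_t\phi^\ep+v\partial_x\phi^\ep=1-\lla M\e^{-\phi^\ep/\ep}\rra\,\e^{\phi^\ep/\ep}$, i.e.\ \eqref{KinEqpsi} with $r=0$; for fixed $\ep>0$ it is locally Lipschitz by the maximum principle and may be taken smooth after regularizing the data. That maximum principle gives $0\le\phi^\ep\le\|\phi_\tin\|_\infty$ uniformly in $\ep$ on $\{M>0\}$, so the half-relaxed limits
\[
\overline\phi(t,x)=\limsup_{\substack{\ep\to0\\(s,y,v)\to(t,x,\cdot)}}\phi^\ep(s,y,v),\qquad\underline\phi(t,x)=\liminf_{\substack{\ep\to0\\(s,y,v)\to(t,x,\cdot)}}\phi^\ep(s,y,v)
\]
are well defined, bounded, automatically independent of $v$, and satisfy $\underline\phi\le\overline\phi$. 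It then suffices to show that $\overline\phi$ is a viscosity subsolution and $\underline\phi$ a viscosity supersolution of \eqref{FinLimEq}, that $\overline\phi(0,\cdot)\le\phi_\tin\le\underline\phi(0,\cdot)$, and to invoke the comparison principle for \eqref{FinLimEq} (valid since $\phi_\tin$ is bounded Lipschitz and $H$ is continuous, see below): this forces $\overline\phi=\underline\phi=:\phi$, the unique viscosity solution, and upgrades the convergence to local uniform convergence on $\R_+\times\R\times V$, with $\phi$ independent of $v$ by construction.

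The core is the perturbed test function argument. Let $\zeta$ be smooth with $\overline\phi-\zeta$ having a strict local maximum at $(t_0,x_0)$, $t_0>0$, and set $p_0=\partial_x\zeta(t_0,x_0)$. Suppose first $p_0\notin\mathrm{Sing}(M)$: then $1+H(p_0)-vp_0\ge 1+H(p_0)-\mu(p_0)>0$ on $V$, so $\chi_0(v)=\ln\bigl(1+H(p_0)-vp_0\bigr)$ is smooth, bounded, and satisfies $\lla M\e^{-\chi_0}\rra=1$ by the defining relation \eqref{DefH}. Let $(t_\ep,x_\ep,v_\ep)$ maximize $\phi^\ep-\zeta-\ep\chi_0$ over a fixed compact neighbourhood of $(t_0,x_0)\times V$, and denote by $A^\ep$ that maximum; by the standard lemma $(t_\ep,x_\ep)\to(t_0,x_0)$. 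Maximality gives $\phi^\ep(t_\ep,x_\ep,v)\le\zeta(t_\ep,x_\ep)+\ep\chi_0(v)+A^\ep$ for all $v$; dividing by $-\ep$, exponentiating, averaging against $M$, and using $\lla M\e^{-\chi_0}\rra=1$, the $A^\ep$-contributions cancel and leave $\rho^\ep(t_\ep,x_\ep)\,\e^{\phi^\ep(t_\ep,x_\ep,v_\ep)/\ep}\ge\e^{\chi_0(v_\ep)}=1+H(p_0)-v_\ep p_0$. Plugging this into the equation at $(t_\ep,x_\ep,v_\ep)$ yields $\partial_t\zeta(t_\ep,x_\ep)+v_\ep\bigl(\partial_x\zeta(t_\ep,x_\ep)-p_0\bigr)\le-H(p_0)$; letting $\ep\to0$, the $v_\ep$-term vanishes since $v_\ep$ is bounded and $\partial_x\zeta(t_\ep,x_\ep)\to p_0$, giving $\partial_t\zeta(t_0,x_0)+H(p_0)\le0$. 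The supersolution property at a minimum of $\underline\phi-\zeta$ is obtained symmetrically with the reversed inequalities and the same corrector.

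The main obstacle is the degenerate regime $p_0\in\mathrm{Sing}(M)$, where $H(p_0)=\mu(p_0)-1$ and the corrector $\ln(\mu(p_0)-vp_0)$ is singular at the extremal velocity $v^*=v_{\max}\,\mathrm{sgn}(p_0)$ (concentration in velocity). The plan is to use the regularized corrector $\chi_0^\delta(v)=\ln(\mu(p_0)-vp_0+\delta)$, for which $\beta_\delta:=\lla M\e^{-\chi_0^\delta}\rra=\lla M/(\mu(p_0)-vp_0+\delta)\rra<1$, run the same computation with $\phi^\ep-\zeta-\ep\chi_0^\delta$, and pass to the limit $\delta\to0$. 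For the supersolution inequality at a minimum of $\underline\phi-\zeta$ the chain closes with only the crude bound $v_\ep p_0\le\mu(p_0)$: one reaches $\partial_t\zeta(t_0,x_0)\ge1-\mu(p_0)-\beta_\delta\delta$, hence $\partial_t\zeta(t_0,x_0)+H(p_0)\ge0$ after $\delta\to0$. For the subsolution inequality the analogous chain gives only $\partial_t\zeta(t_0,x_0)\le1+\mu(p_0)(1-2\beta)-\beta_\delta\delta$ with $\beta=\lim_\delta\beta_\delta\le1$, which is sharp only when $\beta=1$; when $\beta<1$ one must in addition prove that the maximizing velocities $v_\ep$ concentrate at $v^*$, so that $v_\ep p_0\to\mu(p_0)$ and $\beta_\delta(\mu(p_0)-v_\ep p_0+\delta)$ stays controlled, yielding directly $\partial_t\zeta(t_0,x_0)+\mu(p_0)\le1$. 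This concentration — forced by $\chi_0^\delta$ being minimized at $v^*$ together with the uniform bound $0\le\phi^\ep\le\|\phi_\tin\|_\infty$, and likely requiring a coupling $\delta=\delta(\ep)\to0$ — is the genuinely delicate step, and it is exactly where the definition \eqref{SingM} of $\mathrm{Sing}(M)$, hence the piecewise form of $H$, is needed.

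It remains to settle the routine points. The continuity (and convexity) of $H$: off $\mathrm{Sing}(M)$ it follows from the implicit function theorem applied to $\lla M/(1+H-vp)\rra=1$ on $\{1+H-vp>0\}$, where the left side is smooth and strictly decreasing in $H$; on $\mathrm{Sing}(M)$, $H(p)=\mu(p)-1$; and the two expressions agree on $\partial\mathrm{Sing}(M)$ because there $\lla M/(\mu(p)-vp)\rra=1$ exactly. The initial condition is obtained by barriers using the finite speed $v_{\max}$: from $\partial_t\phi^\ep+v\partial_x\phi^\ep\le1$ one gets $\phi^\ep(t,x,v)\le\sup_{|y-x|\le v_{\max}t}\phi_\tin(y)+t$, while $\min_v\phi^\ep$ is a viscosity supersolution of $\partial_t u+v_{\max}|\partial_x u|=0$ since $\rho^\ep\e^{\phi^\ep/\ep}\le1$ at a velocity realizing the minimum, whence $\phi^\ep(t,x,v)\ge\inf_{|y-x|\le v_{\max}t}\phi_\tin(y)$; both bounds tend to $\phi_\tin(x)$ as $t\to0$. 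A conceptually cleaner variant would start from the Duhamel representation of $f^\ep$, posit $\rho^\ep\sim\e^{-\phi/\ep}$, and derive a Lax--Oleinik formula for $\phi$ by a Laplace-type analysis of the resulting fixed point equation, the linear branch of $H$ on $\mathrm{Sing}(M)$ then merely reflecting the boundedness of the velocity support.
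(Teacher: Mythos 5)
First, a point of comparison: the paper does not prove Theorem~\ref{thmCaillerie} at all --- it quotes it from \cite{Caillerie} and only offers the formal derivation via the limiting spectral problem $(1+H-vp)Q=\lla MQ\rra$, positivity of $Q$ forcing $H(p)\ge\mu(p)-1$, with a Dirac mass appearing at $\pm v_{\max}$ when $p\in\mathrm{Sing}(M)$. Your half-relaxed-limit/perturbed-test-function strategy is therefore a genuinely different, rigorous-in-spirit route (essentially that of the cited reference). The non-degenerate case $p_0\notin\mathrm{Sing}(M)$ is done correctly (the corrector $\chi_0$, the cancellation of $A^\ep$, and the passage $\ep\to0$ all check out), and your supersolution chain in the degenerate case does close with the crude bound $v_\ep p_0\le\mu(p_0)$, as you claim. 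Two smaller caveats: the a priori bounds you import from the maximum principle were stated in the paper under \eqref{Mv2}, which fails for the vanishing equilibrium, so they must be re-derived (the comparison $0\le f^\ep\le M$ still gives them on $\{M>0\}$); and continuity of $H$ across $\partial\,\mathrm{Sing}(M)$, which you do address, is indeed needed for the comparison principle.

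The genuine gap is exactly at the theorem's new content: the subsolution inequality when $p_0$ lies in the interior of $\mathrm{Sing}(M)$, where $\beta=\lla M/(\mu(p_0)-vp_0)\rra<1$. Your regularized corrector only yields $\partial_t\zeta(t_0,x_0)\le 1+\mu(p_0)(1-2\beta)$, strictly weaker than the required $\partial_t\zeta(t_0,x_0)\le 1-\mu(p_0)$, and the concentration claim $v_\ep p_0\to\mu(p_0)$ is asserted, not proved. It does not follow from ``$\chi_0^\delta$ is minimized at $v^*$'': the corrector enters at order $\ep$, but the $v$-oscillation of $\phi^\ep$ is itself of order $\ep$ (the paper's AP analysis shows $\eta^\ep/\ep$ is bounded, not small), so the two effects compete and the maximizing velocity need not approach $v^*=v_{\max}\,\mathrm{sgn}(p_0)$. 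This is precisely the regime where the spectral problem has no bounded eigenfunction and its solution is a measure with an atom of weight $1-\beta$ at $v^*$ (the phenomenon the paper illustrates in Fig.~\ref{ConvergenceDirac}); the proof in \cite{Caillerie} handles this direction with a corrector adapted to that measure-valued eigenfunction, i.e.\ a test function carrying concentrated weight at $v^*$, not the scalar regularization $\ln(\mu(p_0)-vp_0+\delta)$ alone. Until you supply that step --- for instance by modifying $\chi_0^\delta$ so as to force the $v$-maximizer to $v^*$ with a quantitative coupling $\delta=\delta(\ep)$, or by reproducing the measure corrector --- the value $H(p)=\mu(p)-1$ on $\mathrm{Sing}(M)$, hence the piecewise definition \eqref{DefH}, is not justified by your argument.
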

This theorem, which has been proved in \cite{Caillerie} can be explained formally. Indeed, with the previous notations
\[
 \rho^\ep=\e^{-\phi^\ep/\ep}, \;\; \frac{f^\ep}{\rho^\ep M}=\e^{-\eta^\ep/\ep},
\]
the equation \eqref{KinEq2} reads, for $r=0$
\[
 \partial_t \phi^\ep +v\partial_x\phi^\ep =1-\e^{\eta^\ep/\ep},
\]
which can be reformulated as follows
\[
(H^\ep-v p^\ep +1)\e^{-\eta^\ep/\ep}=1,
\]
where $H^\ep=-\partial_t \phi^\ep$ and $p^\ep=\partial_x\phi^\ep$. Since $\lla M\e^{-\eta^\ep/\ep}\rra=1$, when $\ep\to 0$, $Q^\ep=\e^{-\eta^\ep/\ep}$ converges formally to the solution $Q$ of the spectral problem
\begin{equation}
\label{specprob}
 (H-v\cdot p +1)Q = \lla MQ\rra,
\end{equation}
 where $H=-\partial_t \phi$, $p=\partial_x \phi$. It implies that $1+H(p)-v p\ge 0$, and then that $H(p)\ge \mu(p)-1$. We distinguish between two cases. The first one is the same we treated in this paper, namely if $p\in \mathrm{Sing}(M)^c$, then 
\[
 \lla \frac{M}{\mu(p)-v p}\rra > 1,
\]
by monotonicity of 
\[
 H\longmapsto \lla \frac{M}{1+H-v p}\rra,
\]
there exists $H(p)>\mu(p)-1$ such that 
\[
\lla \frac{M}{1+H(p)-v p}\rra =1.
\]
But if $p\in \mathrm{Sing}(M)$, such an $H(p)$ cannot be determined and we have 
\[
 H(p)=\mu(p)-1.
\]
Therefore $H(p)$  
is continuous but not $\mathcal{C}^1$ at $p$ such that
\[
\lla \frac{M}{\mu(p)-v p}\rra =1. 
\]
In addition, the solution of the spectral problem \eqref{specprob} is singular, and a Dirac mass arises at $\pm v_{\max}$. It is even possible to compute the weight of this Dirac mass, see \cite{Caillerie, BouinCaillerie}. Note that when the equilibrium satisfies \eqref{Mv2}, $\text{Sing}(M)=\emptyset$, and that in the case we are considering 
in this part 
\begin{equation}
 \label{Msing}
  M(v)=m\left((1-\dv/2)^2-v^2\right),
 \end{equation}
  $\text{Sing}(M)$ is not empty. Indeed, $M$ vanishes at $\pm (1-\dv/2)$ and Taylor expansions of $M$ at $v=\pm (1-\dv/2)$, show that any $p$ large enough belongs to $\text{Sing}(M)$.

 In this part, we investigate the behavior of the micro-macro scheme 
 with the equilibrium \eqref{Msing},
 since it was \emph{a priori} not
 designed for such a singular equilibrium function.
 The consistency of the scheme with the kinetic equation for large values of $\ep\sim 1$ is still clear, since it is written as a finite differences scheme for the kinetic equation. To test the accuracy of the scheme in the asymptotic regime, we first ensure that the reference scheme we use for the limit equation, which has been proposed in \cite{LuoPayneEikonal}, is consistent with the limit equation defined with the constrained Hamiltonian \eqref{DefH}. The right part of Fig.\;\ref{MvHp} presents the numerical  Hamiltonian $H(p)$ computed with the limit scheme. This limit scheme is defined following the idea of \cite{LuoPayneEikonal} with an Euler scheme for \eqref{FinLimEq}, and the constrained Hamiltonian \eqref{DefH} computed with a constrained Newton's method. As expected, it lacks the $\mathcal{C}^1$ character and  the condition $H(p)\ge \mu(p)-1$ is satisfied. The left part of Fig. \ref{MvHp} presents the equilibrium function $M$ 
 \eqref{Msing},
with $m$ such that $\lla M\rra_{N_v}=1$.
\begin{figure}[!ht]
 \centering
\includegraphics[width=17cm]{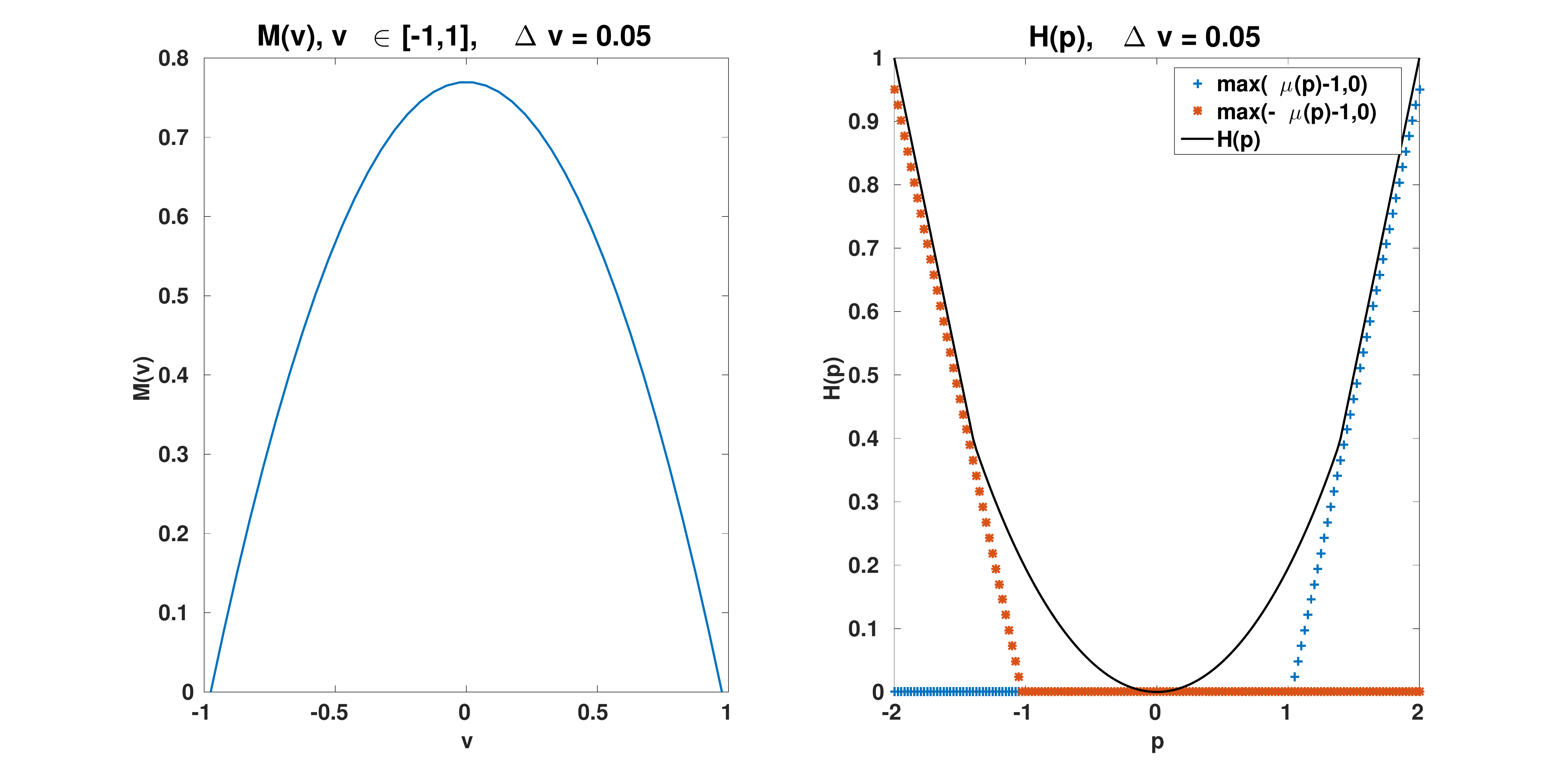}
\caption{Left : The singular equilibrium considered in this section. Right : The constrained Hamiltonian $H(p)$ computed with the limit scheme.}
\label{MvHp}
\end{figure}

The AP character of the micro-macro scheme with this equilibrium is tested in Fig \ref{Phi_ep_10e-4}, where we fixed $\ep=10^{-4}$, $\dx=10^{-2}, \dt=2.5\cdot10^{-3}$, and $\dv=5\cdot10^{-2}$. It presents $\phi$, the Hopf-Cole transform of the spatial density $\rho$,  given by the micro macro scheme, and by the limit scheme. 
As the two curves match, the scheme seems to enjoy the AP property in the case of a singular equilibrium. It can actually be seen on the expression of the scheme. Indeed, since it can be written
\begin{equation}
\label{machin}
 \left\{
\begin{array}{l}
\displaystyle \frac{\phi^{n+1}_i-\phi^n_i}{\dt}+H^{n+1}_i=0 \vspace{4pt}\\
\displaystyle 
\lla 
\begin{array}{l}
{}\\
\frac{\displaystyle  M}{
\displaystyle
1
+H^{n+1}_i
-\frac{\eta^{n+1}_{i,j}-\eta^n_{i,j}}{\dt}-\left[ v\partial_x (\phi+\eta) \right]^n_{i,j}}
\end{array}
\rra_{N_v} 
=1 \vspace{4pt}\\
\displaystyle
1
+H^{n+1}_i
-\frac{\eta^{n+1}_{i,j}-\eta^n_{i,j}}{\dt}-\left[v\partial_x(\phi+\eta)\right]^n_{i,j} =\e^{\eta^{n+1}_{i,j}/\ep},
\end{array} 
 \right.
\end{equation}
the third line implies that the denominator of the second line remains positive in the computations. 

Let us now remark that, as it is stated in the continuous case in Thm. \ref{thmCaillerie}, $\eta^{n+1}_{i,j}$ vanishes when $\ep$ goes to $0$. Indeed, if there are at least two index $j\in\ccl 1,N_v\ccr$ such that $M(v_j)\neq 0$, the proof of the discrete maximum principle stated in Section \ref{SecMP} still holds. As a consequence, $\phi^{n+1}_i$ and $\eta^{n+1}_{i,j}$ are uniformly bounded in $\ep$. The third line of \eqref{machin} provides the following inequality 
\[
 1+H^{n+1}_i -\frac{\eta^{n+1}_{i,j}-\eta^n_{i,j}}{\dt}-\left[v\partial_x(\phi+\eta)\right]^n_{i,j} \ge 1+\frac{\eta^{n+1}_{i,j}}{\ep},
\]
that can be recast as follows
\[
 \left(\dt-\ep\right) \eta^{n+1}_{i,j} \ge -\ep\eta^n_{i,j} -\ep\dt\left( [v\partial_x(\phi+\eta)]^n_{i,j} -H^{n+1}_i \right).
\]
As a consequence, $\eta^{n+1}_{i,j}$ is nonnegative when $\ep$ goes to $0$ with the discretization parameters fixed. 
However, if it is positive, the third point of Prop. \ref{propMP} cannot be satisfied in the small $\ep$ limit. 
Eventually, 
$\eta^{n+1}_{i,j}$ vanishes when $\ep$ goes to $0$.
Thus, the following inequality holds when $\ep\to 0$
\[
 1+H^{n+1}_i-\left[v\partial_x \phi\right]^n_{i,j}\ge 0,
\]
 the constraint $H(\partial_x \phi)\ge \mu(\partial_x \phi)-1$ is hence fulfilled in the limit scheme.

\begin{figure}[!ht]
 \centering
\includegraphics[width=17cm]{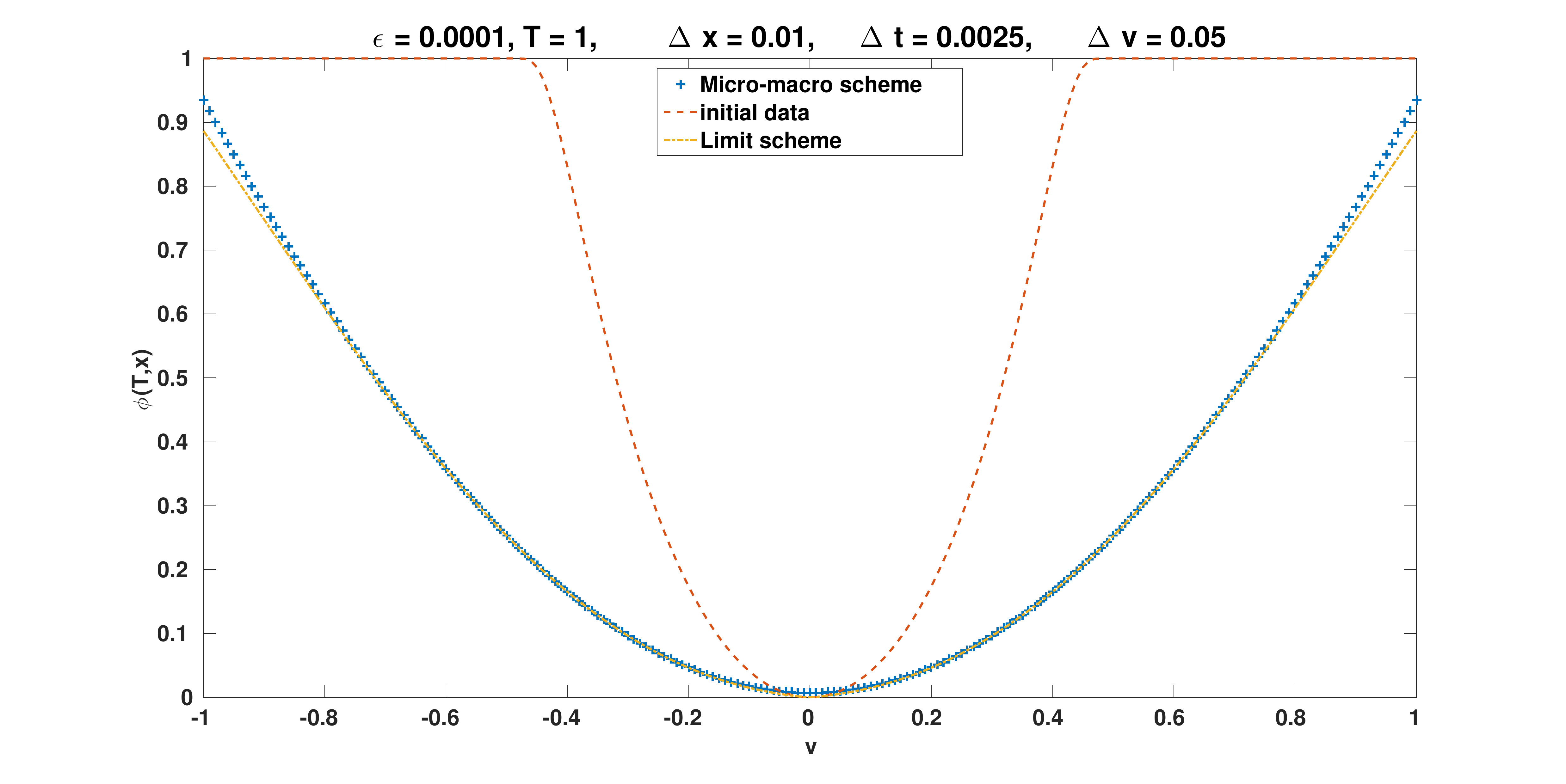}
\caption{The Hopf-Cole transform of the density, $\phi$, given by the micro-macro scheme and the limit scheme at time $T=1$, for $\ep=10^{-4}, \dx=10^{-2}, \dt=2.5\cdot10^{-3}, \dv=5\cdot10^{-2}$.}
\label{Phi_ep_10e-4}
\end{figure}

Moreover, if $v=\pm v_{\max}$, the corrector $\e^{-\eta^\ep(t,x,v)/\ep}$ may not be bounded anymore, and a Dirac mass can even appear for such $v$, see \cite{Caillerie}. This phenomena is highlighted by Fig.\;\ref{ConvergenceDirac}, in which the corrector $\e^{-\eta^\ep(T,x,v)/\ep}$ is represented at time $T=1$ as a function of $x$ and $v$ for different values of $\ep$. As it is expected, we observe that a Dirac mass arises at some points located at the border of the velocity domain. 

\begin{figure}[!ht]
 \centering
\includegraphics[width=17cm]{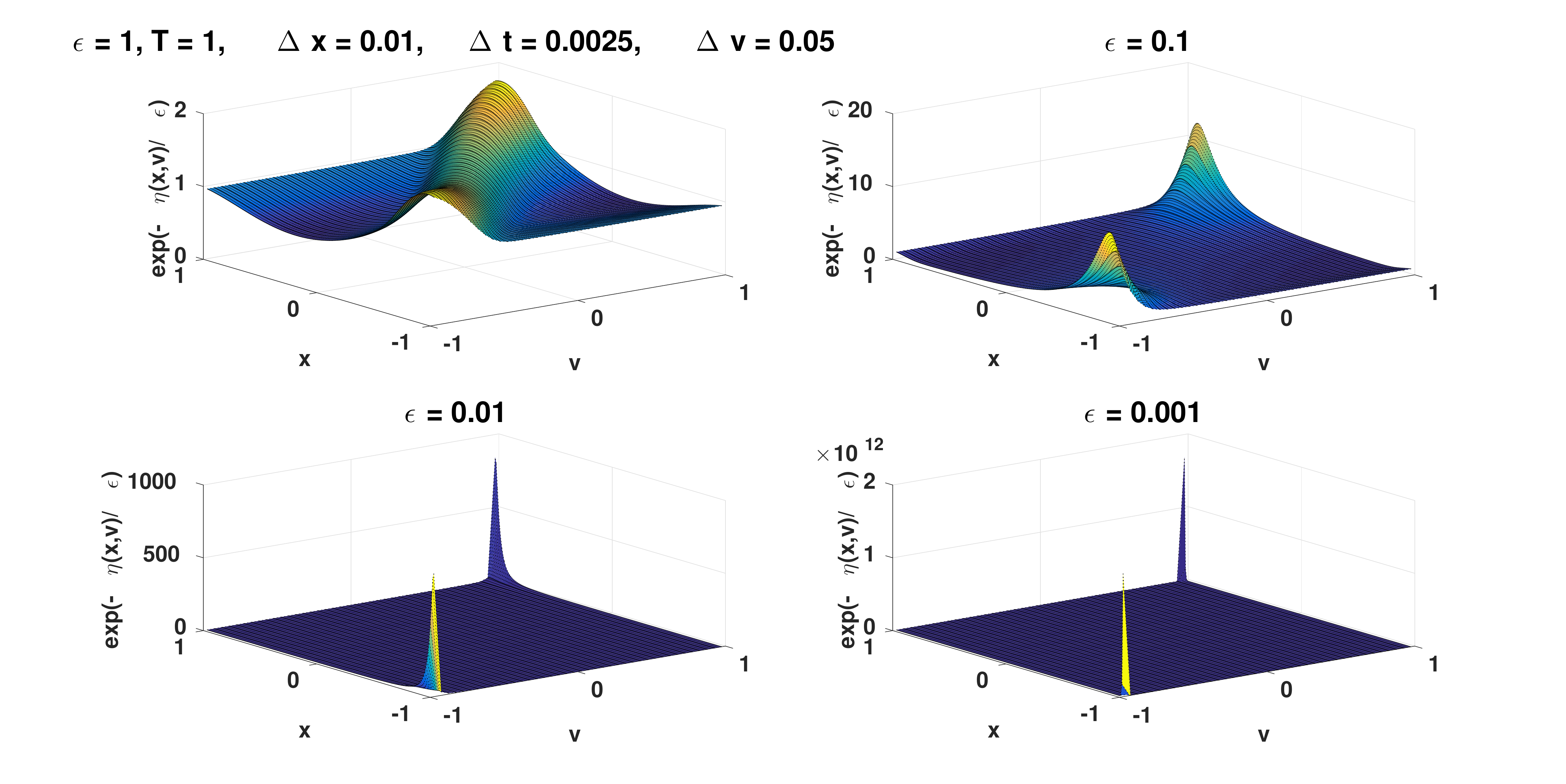}
\caption{The corrector $\e^{-\eta^\ep(T,x,v)/\ep}$ given by the micro-macro scheme at time $T=1$, for different values of $\ep$, and $\dx=10^{-2}, \dt=2.5\cdot10^{-3}, \dv=5\cdot10^{-2}$ as a function of $x$ and $v$. }
\label{ConvergenceDirac}
\end{figure}

As a conclusion, the scheme presented here handles singular measures in the velocity variable which can arise when the equilibrium distribution $M$ vanishes at the boundary of $V$. However, we lack suitable numerical analysis. For instance, the accuracy of the resolution of the non-linear system \eqref{machin} may be reduced when Dirac masses arise in the corrector.


\newpage
 \bibliographystyle{plain}
\bibliography{PostDoc}

\begin{thebibliography}{10}

\bibitem{AronsonWeinberger}
D.~G. Aronson and H.~F. Weinberger.
\newblock Multidimensional nonlinear diffusion arising in population genetics.
\newblock {\em Adv. in Math.}, 30(1):33--76, 1978.

\bibitem{Barles}
G.~Barles.
\newblock {\em Solutions de viscosit\'e des \'equations de
  {H}amilton-{J}acobi}.
\newblock Springer-Verlag Berlin Heidelberg, 1994.

\bibitem{BarlesEvansSouganidis}
G.~Barles, L.~C. Evans, and P.~E. Souganidis.
\newblock Wavefront propagation for reaction-diffusion systems of {PDE}.
\newblock {\em Duke Math. J.}, 61(3):835--858, 12 1990.

\bibitem{BarlesSouganidis}
G.~Barles and P.~E. Souganidis.
\newblock A remark on the asymptotic behavior of the resolution of the {KPP}
  equation.
\newblock {\em Comptes rendus de l'Acad{\'e}mie des sciences. S{\'e}rie 1,
  Math{\'e}matique}, 319(7):679--684, 1994.

\bibitem{BennouneLemouMieussens}
M.~Bennoune, M.~Lemou, and L.~Mieussens.
\newblock Uniformly stable numerical schemes for the {B}oltzmann equation
  preserving the compressible navier stokes asymptotics.
\newblock {\em J. Comput. Phys.}, 227(8):3781 -- 3803, 2008.

\bibitem{Bouin}
E.~Bouin.
\newblock A {H}amilton-{J}acobi approach for front propagation in kinetic
  equations.
\newblock {\em Kinetic and Related Models}, 8(2):255--280, 2015.

\bibitem{BouinCaillerie}
E.~Bouin and N.~Caillerie.
\newblock Spreading in kinetic reaction-transport equations in higher velocity
  dimensions.
\newblock {\em Arxiv - 1705.02191}, 2017.

\bibitem{BouinCalvez}
E.~Bouin and V.~Calvez.
\newblock A kinetic eikonal equation.
\newblock {\em Comptes Rendus Mathematique}, 350(5):243 -- 248, 2012.

\bibitem{BouinCalvezNadin}
E.~Bouin, V.~Calvez, and G.~Nadin.
\newblock Propagation in a kinetic reaction-transport equation: Travelling
  waves and accelerating fronts.
\newblock {\em Archive for Rational Mechanics and Analysis}, 217(2):571--617,
  2015.

\bibitem{BuetCordier}
C.~Buet and S.~Cordier.
\newblock Asymptotic preserving scheme and numerical methods for radiative
  hydrodynamic models.
\newblock {\em C. R. Math. Acad. Sci. Paris}, 338(12):951 -- 956, 2004.

\bibitem{Caillerie}
N.~Caillerie.
\newblock Large deviations of a velocity jump process with a
  {H}amilton-{J}acobi approach.
\newblock {\em C. R. Math. Acad. Sci. Paris}, 355(2):170 -- 175, 2017.

\bibitem{CarilloGoudonLafitteVecil}
J.A. Carrillo, T.~Goudon, P.~Lafitte, and F.~Vecil.
\newblock Numerical schemes of diffusion asymptotics and moment closures for
  kinetic equations.
\newblock {\em J. Sci. Comput.}, 36(1):113--149, 2008.

\bibitem{CrandallIshiiLions}
M.~G. Crandall, H.~Ishii, and P.-L. Lions.
\newblock User's guide to viscosity solutions of second order partial
  differential equations.
\newblock {\em Bull. Amer. Math. Soc. (N. S.)}, 27(1):1--67, 1992.

\bibitem{CrandallLions}
M.~G. Crandall and P.~L. Lions.
\newblock Two approximations of solutions of {H}amilton-{J}acobi equations.
\newblock {\em Mathematics of Computation}, 43(167):1--19, 1984.

\bibitem{CuestaHittmeirSchmeiser}
C.~M. Cuesta, S.~Hittmeir, and C.~Schmeiser.
\newblock Traveling waves of a kinetic transport model for the
  {K}{P}{P}-{F}isher equation.
\newblock {\em SIAM J. Math. Anal.}, 44(6):4128--4146, 2012.

\bibitem{Evans}
L.~C. Evans.
\newblock The perturbed test function method for viscosity solutions of
  nonlinear {PDE}.
\newblock {\em Proceedings of the Royal Society of Edinburgh: Section A
  Mathematics}, 111(3-4):359--375, 1989.

\bibitem{EvansSouganidis}
L.~C. Evans and P.~E. Souganidis.
\newblock A {PDE} approach to geometric optics for certain semilinear parabolic
  equations.
\newblock {\em Indiana Univ. Math. J.}, 38(1):141--172, 1989.

\bibitem{FlemingSouganidis}
W.~H. Fleming and P.~E. Souganidis.
\newblock {PDE}-viscosity solution approach to some problems of large
  deviations.
\newblock {\em Ann. Scuola Norm. Sup. Pisa Cl. Sci.}, 13(2):171--192, 1986.

\bibitem{Freidlin}
M~I Freidlin.
\newblock Geometric optics approach to reaction-diffusion equations.
\newblock {\em SIAM J. Appl. Math.}, 46(2):222--232, April 1986.

\bibitem{Hadeler}
K.~P. Hadeler.
\newblock {\em Reaction transport systems in biological modelling}, pages
  95--150.
\newblock Springer Berlin Heidelberg, Berlin, Heidelberg, 1999.

\bibitem{Jin}
S.~Jin.
\newblock Efficient asymptotic-preserving ({AP}) schemes for some multiscale
  kinetic equations.
\newblock {\em SIAM J. Sci. Comput.}, 21(2):441--454, 1999.

\bibitem{JinPareschi1}
S.~Jin and L.~Pareschi.
\newblock Discretization of the multiscale semiconductor {B}oltzmann equation
  by diffusive relaxation schemes.
\newblock {\em J. Comput. Phys.}, 161(1):312--330, June 2000.

\bibitem{JinPareschiToscani}
S~Jin, L.~Pareschi, and G.~Toscani.
\newblock Uniformly accurate diffusive relaxation schemes for multiscale
  transport equations.
\newblock {\em SIAM J. Numer. Anal.}, 38(3):913--936, 2000.

\bibitem{Klar1}
A.~Klar.
\newblock An asymptotic-induced scheme for nonstationary transport equations in
  the diffusive limit.
\newblock {\em SIAM J. Numer. Anal.}, 35(3):1073--1094, 1998.

\bibitem{Klar2}
A.~Klar.
\newblock An asymptotic preserving numerical scheme for kinetic equations in
  the low mach number limit.
\newblock {\em SIAM J. Numer. Anal.}, 36(5):1507--1527, 1999.

\bibitem{KolmogorovPetrovskyPiskunov}
A.~Kolmogorov, I.~Petrovsky, and N.~Piskunov.
\newblock \'etude de l'\'equation de la diffusion avec croissance de la
  quantit\'e de mati\`ere et son application \`a un probl\`eme biologique.
\newblock {\em Mosc. Univ. Bull. Math.}, 1937.

\bibitem{LemouRelaxedMM}
M.~Lemou.
\newblock Relaxed micro-macro schemes for kinetic equations.
\newblock {\em C. R. Math. Acad. Sci. Paris}, 348(7-8):455 -- 460, 2010.

\bibitem{LemouMehats}
M.~Lemou and F.~M{\'e}hats.
\newblock {Micro-macro schemes for kinetic equations including boundary
  layers}.
\newblock {\em {SIAM J. Sci. Comput.}}, 34(6):734--760, 2012.

\bibitem{LemouMieussens}
M.~Lemou and L.~Mieussens.
\newblock A new asymptotic preserving scheme based on micro-macro formulation
  for linear kinetic equations in the diffusion limit.
\newblock {\em SIAM J. Sci. Comput.}, 31(1):334--368, 2008.

\bibitem{LuoPayneAsympt}
S.~Luo and N.~Payne.
\newblock An asymptotic method based on a {H}opf-{C}ole transformation for a
  kinetic {BGK} equation in the hyperbolic limit.
\newblock {\em J. Comput. Phys.}, pages~--, 2017.

\bibitem{LuoPayneEikonal}
S.~Luo and N.~Payne.
\newblock Properties-preserving high order numerical methods for a kinetic
  eikonal equation.
\newblock {\em J. Comput. Phys.}, 331(C):73--89, February 2017.

\bibitem{NaldiPareschi}
G.~Naldi and L.~Pareschi.
\newblock Numerical schemes for kinetic equations in diffusive regimes.
\newblock {\em Appl. Math. Lett.}, 11(2):29 -- 35, 1998.

\bibitem{CalvezEColi}
J.~Saragosti, V.~Calvez, N.~Bournaveas, B.~Perthame, A.~Buguin, and
  P.~Silberzan.
\newblock Directional persistence of chemotactic bacteria in a traveling
  concentration wave.
\newblock {\em Proceedings of the National Academy of Sciences},
  108(39):16235--16240, 2011.

\bibitem{Schwetlick}
H.~R. Schwetlick.
\newblock Travelling fronts for multidimensional nonlinear transport equations.
\newblock In {\em {A}nn. {I}nst. {H}. {P}oincar\'e {A}nal. {N}on
  {L}in{\'e}aire}, volume~17, pages 523--550, 2000.

\bibitem{Souganidis}
P.~E. Souganidis.
\newblock {\em Front propagation: Theory and applications}, pages 186--242.
\newblock Springer Berlin Heidelberg, Berlin, Heidelberg, 1997.

\end{thebibliography}

\end{document}